\newcommand{\R}{\mathbb{R}} 
\newcommand{\N}{\mathbb{N}} 
\newcommand{\normal}{\mathcal{N}}
\newcommand{\dint}{{\rm d}}
\renewcommand{\P}{\mathbb{P}}
\newcommand{\E}{\mathbb{E}}
\renewcommand{\epsilon}{\varepsilon}
\def\B{\mathcal{B}}
\newcommand{\argmax}{\arg\!\max}
\newcommand{\Poi}{\text{Poi}}
\newcommand{\abs}[1]{\left\vert #1 \right\vert}
\newtheorem{thm}{Theorem}
\newtheorem{coro}{Corollary}
\newtheorem{lemma}{Lemma}
\newtheorem{remark}{Remark}
\newtheorem{example}{Example}
\theoremstyle{definition}
\newtheorem{Def}{Definition}
\definecolor{darkred}{RGB}{139,0,0}
\definecolor{darkgreen}{RGB}{0,100,0}
\definecolor{darkmagenta}{RGB}{139,0,139}
\definecolor{darkpurple}{RGB}{110,0,180}
\definecolor{darkblue}{RGB}{40,0,200}
\definecolor{darkorange}{RGB}{255,140,0}
\newcommand{\md}[1]{\textcolor{darkgreen}{#1}}
\begin{document}

\title[Maximum likelihood 
estimation in HMMs with inhomogeneous noise]{Maximum likelihood 
estimation in hidden Markov models
with inhomogeneous noise}

\author{Manuel Diehn$^1$, Axel Munk$^{1,2,3}$, Daniel Rudolf$^{1,3}$}

\address{$^1$Institute for Mathematical Stochastics, 
Georg-August-University of G\"ottingen, Goldschmidtstra\ss e 7, 37077 G\"ottingen;
}
\address{$^2$Max Planck Insititute for Biophysical Chemistry, Am Fa\ss berg 11, 
 37077 G\"ottingen}
\address{$^3$Felix-Bernstein-Institute for Mathematical Statistics 
 in the Biosciences, Goldschmidtstra\ss e 7, 37077 G\"ottingen}
 
\email{mdiehn1@gwdg.de\ \&\ amunk1@gwdg.de\ \&\ daniel.rudolf@uni-goettingen.de}

\keywords{Inhomogeneous hidden Markov models,
    quasi-maximum likelihood estimation,
    strong consistency, robustness, asymptotic mean stationarity}

\subjclass[2010]{Primary 62F12, secondary 62M09}

\begin{abstract}
We consider parameter estimation in 
finite hidden state space Markov models with time-dependent inhomogeneous noise,
where the inhomogeneity vanishes sufficiently fast.
Based on the concept of asymptotic mean stationary processes 
we prove 
that the maximum likelihood and 
a quasi-maximum likelihood estimator (QMLE) are  strongly consistent.
 The computation of the QMLE 
 ignores the inhomogeneity, hence, is much simpler and robust.
The theory is motivated by an example from biophysics and applied to a Poisson- and linear Gaussian model.
 \end{abstract}

\maketitle 
 
\section{Introduction} 
\noindent
\emph{Motivation.}
Hidden Markov models (HMMs) have a long history and are widely used in
a plenitude of applications  ranging from econometrics, chemistry, biology, 
speech recognition to neurophysiology. For example, 
transition rates between openings and closings 
of ion channels, see \cite{neher_patch_1992}, are often assumed to be Markovian and the
observed conductance levels from such experiments can be
modeled with homogeneous HMMs.
The HMM is typically justified if the underlying experimental conditions,
such as the applied voltage in ion channel recordings,
are kept constant over time, 
see \cite{hotz_idealizing_2013,qin_hidden_2000,vandongen_new_1996,
siekmann_mcmc_2011,venkataramanan_applying_2002}. 

However, if the conductance levels are measured  in experiments
with varying voltage over time, then the noise
appears to be inhomogeneous, i.e., the noise 
has a voltage-dependent component. 
Such experiments play an important 
role in the understanding of the dependence of
the gating behavior to the gradient of the applied voltage 
\cite{briones_voltage_2016,danelon_interaction_2006}.	
To the best of our knowledge, there is
a lack of a rigorous statistical methodology for analyzing
such type of problems, for which we provide some first theoretical insights.
More detailed, in this paper we are concerned with 
the consistency of the maximum likelihood estimator (MLE) in such models and 
with the question of how
much maximum likelihood estimation in a homogeneous model is affected by 
inhomogeneity of the noise, a problem which appears to be relevant to many other situations, as well.

A \emph{homogeneous hidden Markov model}, as considered in this paper, is given by a bivariate 
stochastic process \mbox{$(X_n,Y_n)_{n\in\mathbb{N}}$}, 
where $(X_n)_{n\in \mathbb{N}}$
is a Markov chain with finite state space $S$, and $(Y_n)_{n\in \mathbb{N}}$ is, 
conditioned on $(X_n)_{n\in\mathbb{N}}$,
an independent sequence of random variables mapping to a Polish space $G$, 
such that the distribution of $Y_n$ depends only on $X_n$. 
The Markov chain $(X_n)_{n\in\mathbb{N}}$ is not observable, but observations of $(Y_n)_{n\in \mathbb{N}}$
are available. 
A well known statistical method to estimate the unknown parameters is based on the maximum likelihood principle, see \cite{baum_statistical_1966,baum_maximization_1970}.
The study of consistency and asymptotic normality of 
the MLE of such homogeneous 
HMMs has a long history and is nowadays
well understood in quite general situations. 
We refer to the final paragraph of this section for a review but 
already mention that the approach of 
\cite{douc_consistency_2011} is particularly useful for us.

In contrast to the classical setting, we consider an \emph{inhomogeneous HMM}, 
namely a bivariate stochastic process $(X_n,Z_n)_{n\in\mathbb{N}}$, where
conditioned on $(X_n)_{n\in \mathbb{N}}$ we assume that 
$(Z_n)_{n\in \mathbb{N}}$ is a sequence of independent random variables 
on space $G$, such that the distribution
of $Z_n$ depends not only on the value of $X_n$, but additionally on $n\in \mathbb{N}$. 
The dependence on $n$ implies that the
Markov chain $(X_n,Z_n)_{n\in\mathbb{N}}$ is inhomogeneous. 
In such generality a theory for
maximum likelihood estimation in 
inhomogeneous hidden Markov models is, of course, a 
notoriously difficult task.

However, motivated by the example above (for details see below) 
we consider a specific situation
where e.g. the inhomogeneity is caused by an exogenous quantity (e.g. the varying voltage) 
with decreasing influence as $n$ increases .
To this end, we introduce the concept of a \emph{doubly hidden Markov model} (DHMM).
\begin{Def}[DHMM]\label{def: DHMM}
A doubly hidden Markov model is a trivariate 
stochastic process $(X_n,Y_n,Z_n)_{n\in\mathbb{N}}$
such that $(X_n,Y_n)_{n\in\mathbb{N}}$
is a non-observed homogeneous HMM and $(X_n,Z_n)_{n\in \mathbb{N}}$
is an inhomogeneous HMM with observations $(Z_n)_{n\in\N}$.
\end{Def}
For such a DHMM we have in mind that the 
distribution of $Z_n$ is getting ``closer'' to 
the distribution of $Y_n$ for increasing $n$.
A crucial point here is that $(Z_n)_{n\in \mathbb{N}}$
is observable whereas $(Y_n)_{n\in \mathbb{N}}$ is not. 
Because of the ``proximity'' of $Z_n$ and $Y_n$ one might hope 
to carry theoretical results from homogeneous HMMs to inhomogeneous ones.

We illustrate a setting of a DHMM by modeling the conductance level of ion channel data with
varying voltage\footnote{Measurements are kindly provided by the lab of C.~Steinem, Institute for Organic and 
Molecular Biochemistry, University of G\"ottingen}.
In Figure~\ref{example ramp} measurements of
the current flow across the outer cell membrane of the
 porin PorB of Neisseria meningitidis are displayed in order to investigate the antibacterial resistance of the PorB channel.
As the applied voltage $(u_n)_{n\in\mathbb{N}}$  increases linearly 
Ohm's law suggests that the measured current increases also
linearly, see Figure~\ref{example ramp}. A reasonable model for 
the observed current is to assume that  
it follows a Gaussian hidden Markov model,
i.e., the dynamics can be described by
\begin{equation} \label{dyn}
u_n(\mu^{(X_n)} + \sigma^{(X_n)}V_n)+\tilde{\epsilon}_n.
\end{equation}
Here the observation space $G=\mathbb{R}$
and the finite state space of the hidden Markov chain $(X_n)_{n\in \mathbb{N}}$ 
is assumed to be 
$S=\{1,2\}$, which corresponds to an ``open'' and ``closed'' gate.
For $i=1,2$, the expected slope is $\mu^{(i)}\in \mathbb{R}$, 
the noise level $\sigma^{(i)}\in(0,\infty)$
and $(V_n)_{n\in\mathbb{N}}$ is an i.i.d. standard normal sequence, i.e.,
$V_1\sim \normal(0,1)$, where $\mathcal{N}(\mu,\sigma^2)$
 denotes the
 normal distribution with mean $\mu\in \mathbb{R}$ and variance $\sigma^2>0$.
Further, $(\tilde{\epsilon}_n)_{n\in\N}$
 is another sequence of real-valued 
i.i.d. random variables, independent of $(V_n)_{n\in\N}$, 
with $\tilde{\epsilon}_1\sim \normal(0,\kappa^2)$ and $\kappa^2>0$, 
which is necessary to model the background noise, even when $u_n=0$.

Dividing the dynamic \eqref{dyn} by $u_n$ gives the conductivity
of the channel, see Figure~\ref{example conductivity}.
\begin{figure}[!htb]
	\centering
  \includegraphics[width=1\textwidth]{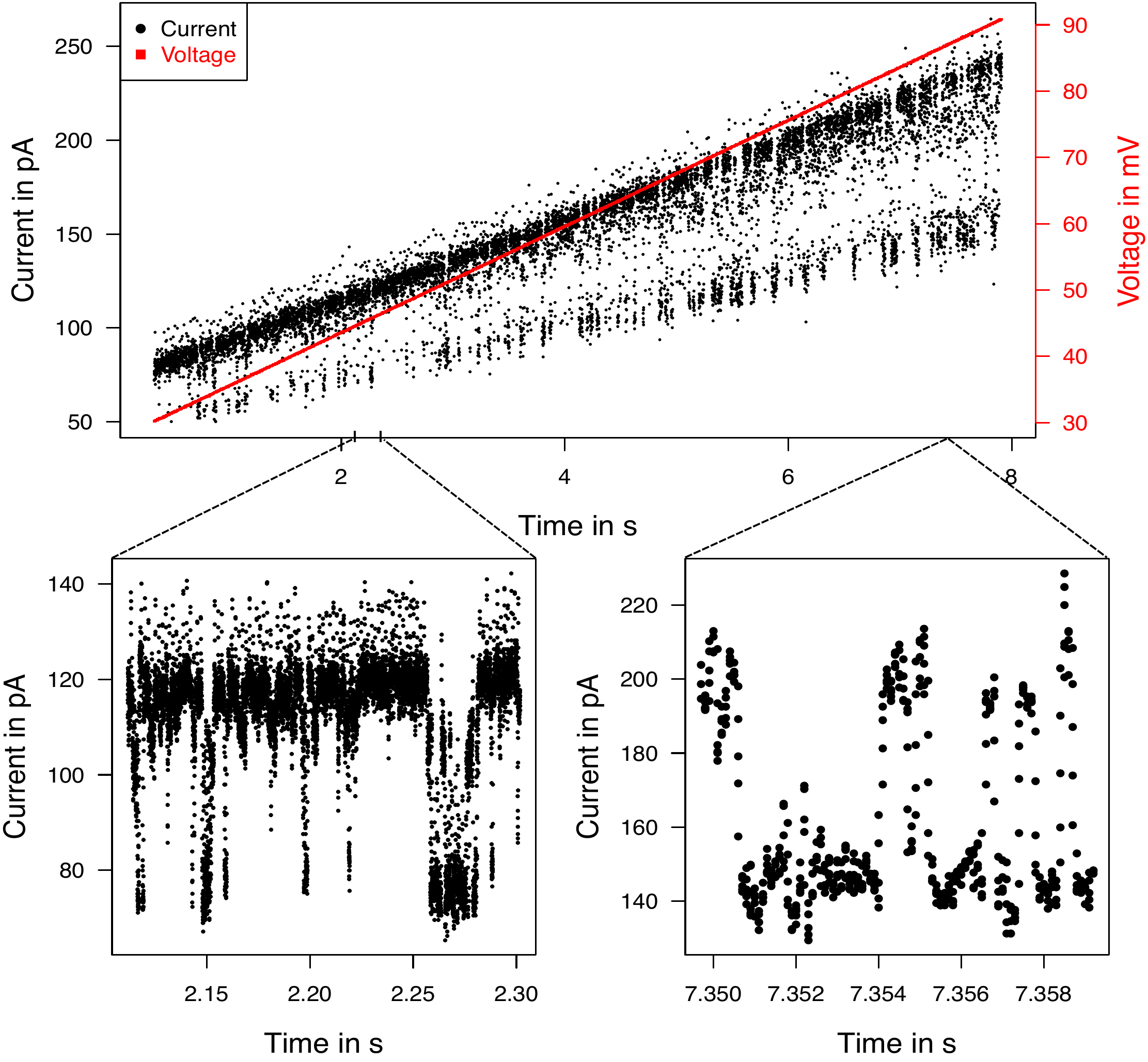}
	\caption{ Above: Measurements at a large time scale (seconds)
	of the current flow of a PorB mutant protein driven by linear increasing voltage 
	from $30mV$-$120mV$.Below: Zoom into finer time scales (decisecond to millisecond).
	}
	\label{example ramp}
\end{figure}
\begin{figure}[!htb]
	\centering
  \includegraphics[width=1\textwidth]{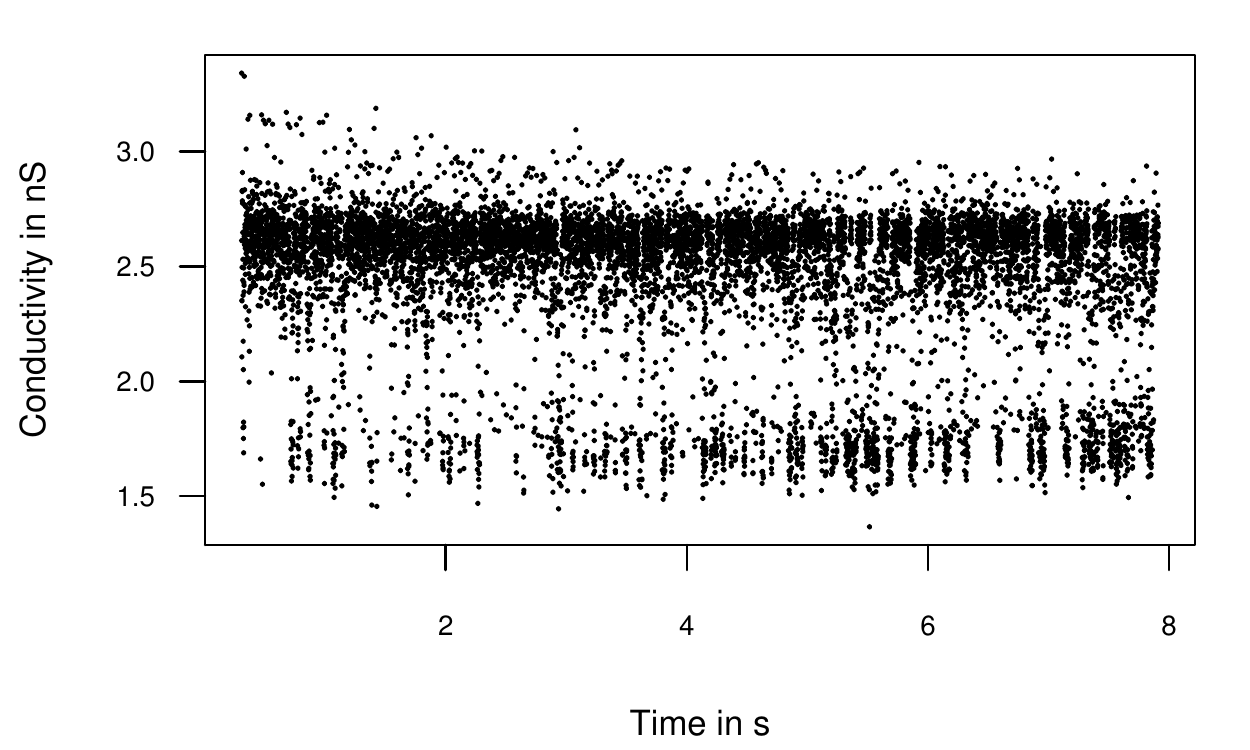}
	\caption{Conductivity of the protein PorB. The variance of
	the data decreases in time to a constant.}
	\label{example conductivity}
\end{figure}
This is now a sequence $(Z_n)_{n\in\mathbb{N}}$ of an inhomogeneous HMM.
The state of the Markov chain determines the parameter
$(\mu^{(1)},\sigma^{(1)})$ 
or $(\mu^{(2)},\sigma^{(2)})$, both unknown. 
The non-observable sequence of 
random variables $(Y_n)_{n\in \mathbb{N}}$
of the homogeneous HMM is given by
\begin{align}  \label{eq: Y_ion}
    Y_n & := \mu^{(X_n)} + \sigma^{(X_n)} V_n.  
 \end{align}
 The observation $(Z_n)_{n\in\mathbb{N}}$ of the inhomogeneous HMM
 is determined by 
 \begin{equation}  \label{eq: Z_ion}
  Z_n := Y_n + \varepsilon_n,
  \end{equation}
with $\varepsilon_n= \tilde{\epsilon}_n/u_n$, such that  
$\varepsilon_n\sim \mathcal{N}(0,\beta_n^2)$ where $\beta_n=\kappa/u_n$ and 
$\lim_{n\to \infty} \beta_n^2 =0,$ as the voltage increases.
Such a DHMM describes approximately the observed conductance level of ion channel 
 recordings with linearly increasing voltage.

 Intuitively, here one can already see that for sufficiently large $n$ the influence of  $\varepsilon_n$ ``washes out'' as 
 $\beta_n$  decreases to zero  and observations of $Z_n$ are ``close'' to $Y_n$. 

\emph{Main result.} We explain now our main theoretical contribution for such a DHMM. Assume 
that we have a parametrized DHMM $(X_n,Y_n,Z_n)_{n\in\N}$ 
 with compact parameter space $\Theta\subseteq \mathbb{R}^d$.
For $\theta\in \Theta$ let 
$q_\theta^\nu$ be the likelihood function of $Y_1,\ldots,Y_n$ and
$p_\theta^\nu$ be the likelihood function of $Z_1,\ldots,Z_n$ with $X_0\sim \nu$.
Both functions  are assumed to be continuous in $\theta$.
Given observations $z_1,\dots,z_n$ of $Z_1,\dots,Z_n$ our goal is to estimate
``the true'' parameter $\theta^*\in \Theta$. 
The MLE $\theta_{\nu,n}^{\rm\,ML}$, given by a parameter in the set of 
maximizers of the log-likelihood function, i.e.,
\[
 \theta_{\nu,n}^{\rm\,ML} \in \argmax_{\theta\in \Theta}\log p_\theta^\nu(z_1,\dots,z_n),
\]
is the canonical estimator for approaching this problem.  
Note that this set is non-empty due to the compactness of the parameter space
and the continuity of $p^{\nu}_{\theta}(z_1,\ldots,z_n)$ in $\theta$.
Unfortunately none of the strong consistency results of 
maximum likelihood parameter estimation 
provided for homogeneous HMMs
are applicable, because of the inhomogenity.
Namely, all proofs for consistency in HMMs
rely on the fact that the conditional distribution of $Z_n $ given
$X_n = x$ is constant for all $n\in\N$.
In a DHMM this is usually not the case 
for $(Z_n)_{n\in\mathbb{N}}$, because of the time-dependent noise. 
This issue can be circumvented by proving 
that under suitable assumptions $(Z_n)_{n\in \mathbb{N}}$ is an \emph{asymptotic
mean stationary process}. This implies 
ergodicity and an ergodic theorem for $(Z_n)_{n\in\mathbb{N}}$, that can be used. 
However, for the computation of $\theta_{\nu,n}^{\rm\,ML}$ explicit knowledge of the
inhomogeneity is needed, i.e., of the time-dependent component of the noise 
which is hardly known in practice (recall our data example).
That is the reason for us to introduce a quasi-maximum likelihood estimator (QMLE), 
given by a maximizer of the quasi-likelihood function, i.e.,
\[
 \theta_{\nu,n}^{\rm\,QML} \in \argmax_{\theta\in \Theta} \log q_\theta^\nu(z_1,\dots,z_n).
\]
This is not a MLE, since the observations 
are generated from the inhomogeneous model,
whereas  $q_{\theta}^{\nu}$ is the likelihood function of the homogeneous model.
Roughly, we assume the following (for a precise definition see Section~\ref{sec: struct_cond}):
\begin{enumerate}[label=\arabic*.)]
 \item The transition matrix of the hidden finite state space Markov chain is irreducible and
  satisfies a continuity condition w.r.t. the parameters.
 \item\label{vague closeness} The observable and non-observable random variables $(Z_n)_{n\in\N}$
 and $(Y_n)_{n\in\N}$ are ``close'' to each other in a suitable sense.
 \item The homogeneous HMM is well behaving, such that observations of $(Y_n)_{n\in\N}$
  would lead to a consistent MLE.
\end{enumerate}
We show that if the $Z_n$ approximate the $Y_n$ 
reasonably well (see the condition \ref{en: C1} in Section~\ref{sec: struct_cond} )
the estimator $\theta_{\nu,n}^{\rm\,QML}$
provides also a reasonable way for approximating ``the true'' parameter $\theta^*$.
If the model satisfies all conditions, see Section~\ref{sec: struct_cond}, 
then Theorem~\ref{thm: main_thm}, states that
\[
 \theta_{\nu,n}^{\rm\,QML} \to \theta^*\quad \text{a.s.},\text{ as } n\to\infty.
\]
Hence the QMLE is consistent. 
As a consequence we obtain under an additional assumption that also 
the MLE is consistent,
$\theta_{\nu,n}^{\rm\,ML} \to \theta^*$
almost surely, as $n\to \infty$. 
For a Poisson model 
and linear Gaussian model we specify Theorem~\ref{thm: main_thm}, see Section~\ref{sec: appl}. 
In the DHMM described in \eqref{eq: Y_ion} and \eqref{eq: Z_ion}
we obtain consistency of the QMLE
whenever $\beta_n=\mathcal{O}(n^{-q})$ for some $q>0$.
In Section~\ref{sec: disc} we reconsider the approximating condition \ref{vague closeness}, 
precisely stated in Section~\ref{sec: struct_cond}, provide an outlook to possible extensions 
and discuss asymptotic normality of the estimators.

\textit{Literature review and connection to our work.} The study of maximum likelihood estimation in
homogeneous hidden Markov models has a long history and was initiated by Baum and 
Petrie, see \cite{baum_statistical_1966,baum_maximization_1970}, 
who
proved strong consistency of 
the MLE for finite state spaces $S$ and $G$.
Leroux extends this result to general observation spaces in \cite{leroux_maximum-likelihood_1992}. 
These consistency results rely on ergodic theory for stationary processes 
which is not applicable in our setting since
the process we observe is not stationary. More precisely, it was shown that the 
relative entropy rate converges for any parameter $\theta$ in the parameter space $\Theta$
using an ergodic theorem for subadditive processes.
There are further extensions also to Markov chains on general 
state spaces, but under stronger assumptions, see \cite{douc_asymptotics_2001,douc_asymptotic_2004,genon-catalot_lerouxs_2006,legland_asymptotic_1997,heymans_consistent_1986}.
A breakthrough has been achieved by Douc et al. \cite{douc_consistency_2011} who used the concept of exponential
separability. This strategy allows one to bound the relative entropy rate directly. 

Although the state space of the Markov chain is more general than
in our setting, we cannot apply the results of 
\cite{douc_consistency_2011}
due to the inhomogeneity of the observation,
but we use the same approach to show our consistency statements.

The investigation of strong consistency of maximum likelihood estimation 
in inhomogeneous HMMs is less developed. In \cite{ailliot_consistency_2015} 
and \cite{pouzo_maximum_2016}  the
MLE in inhomogeneous
Markov switching models is studied. 
There, the transition probabilities are also influenced by the observations, but the inhomogeneity there
is different from the time-dependent inhomogeneity considered in our work, since the conditional law
is not changing over time.

Related to strong consistency, as considered here, is the investigation of asymptotic
normality (as it provides weak consistency). For homogeneous HMMs asymptotic 
normality has be shown for example
in \cite{douc_asymptotic_2004,bickel_asymptotic_1998}.
In \cite{pouzo_maximum_2016}, also, asymptotic normality 
for the MLE in
Markov switching models is studied whereas in
\cite{jensen_asymptotic_2011} asymptotic normality of \emph{M}-estimators
in more general inhomogeneous situations is considered.
However, the QMLE we suggest and analyze
does not satisfy the assumptions 
imposed there.
In Section~\ref{QMLE_asymp_nornal} and in Appendix~\ref{Ass_asymp_norm} we provide and 
discuss necessary conditions to achieve asymptotic normality
for the QMLE by adapting the approach of \cite{jensen_asymptotic_2011}.

To ease readability Section~\ref{sec: main_thm_proof} is devoted to the proofs of our main results. 
In particular, we draw 
the connection between asymptotic mean stationary processes 
and inhomogeneous hidden Markov models.

\section{Setup and notation}
\noindent
We denote the finite state space of $(X_n)_{n\in\N}$ by 
$S=\{1,\dots,K\}$ and $\mathcal{S}$ denotes the power set of $S$.
Furthermore, let $(G,m)$ be a Polish space with metric $m$ and corresponding
Borel $\sigma$-field $\mathcal{B}(G)$. 
The measurable space $(G,\mathcal{B}(G))$ is equipped with a $\sigma$-finite 
reference measure $\lambda$.
Througout the whole work we consider parametrized families of 
DHMMs (see Definition \ref{def: DHMM}) with compact parameter space $\Theta\subset \mathbb{R}^d$ for some $d\in\mathbb{N}$.
For this let 
$(\mathbb{P}_\theta)_{\theta\in \Theta}$ be a sequence of probability measures on 
a measurable space $(\Omega, \mathcal{F})$ such that for each 
parameter $\theta$ the distribution of $(X_n,Y_n,Z_n)\colon (\Omega,\mathcal{F},\mathbb{P}_\theta)\to S\times G\times G$
is specified by
\begin{itemize}
 \item an initial distribution $\nu$ on $S$ and a $K\times K$ transition matrix 
 $P_\theta=(P_\theta(s,t))_{s,t\in S}$
 of the Markov chain $(X_n)_{n\in\mathbb{N}}$,
such that
\[
 \mathbb{P}_\theta (X_n=s) =  
 \nu P_\theta^{n-1}(s), \quad s\in S, 
\]
 where $\nu P_\theta^0 = \nu $ and for $n>1$,
 \[
  \nu P_\theta^{n-1}(s) = \sum_{s_1,\dots,s_{n-1}\in S} P_\theta(s_{n-1},s) \prod_{i=1}^{n-2} P_\theta(s_i,s_{i+1})  \nu(s_1),
  \quad s\in S;
 \]
 (Here and elsewhere we use the convention
 that $\prod_{i=1}^0 a_i = 1$ for any sequence $(a_i)_{i\in \N}\subset \R$.)
 \item 
 and by the conditional distribution $Q_{\theta,n}$ 
 of $(Y_n,Z_n)$ given $X_n=s$, that is, 
 \[
  \mathbb{P}_\theta ((Y_n,Z_n)\in C\mid X_n=s) = Q_{\theta,n}(s,C),
  \qquad C\in \mathcal{B}(G^2)
 \]
 which satisfies that there are conditional density functions 
 $f_\theta, f_{\theta,n} \colon S\times G \to [0,\infty)$ w.r.t. $\lambda$, such that 
   \begin{align*}
    \mathbb{P}_\theta(Y_n\in A\mid X_n=s) & 
  = Q_{\theta,n}(s,A\times G) = \int_A f_{\theta}(s,y) \lambda(\dint y), \quad A\in \mathcal{B}(G),\\
  \mathbb{P}_\theta(Z_n\in B\mid X_n=s)& =Q_{\theta,n}(s,G\times B) 
  = \int_B f_{\theta,n}(s,z) \lambda(\dint z),\; B\in \mathcal{B}(G).
 \end{align*}
  Here the distribution of $Y_n$ given $X_n=s$ is independent of $n$, whereas
 the distribution of $Z_n$ given $X_n=s$ depends through $f_{\theta,n}$ also explicitly on $n$.
\end{itemize}
\noindent 
By $\mathcal{P}(S)$ we denote the
set of probability measures on $S$.
To indicate the dependence on the initial distribution, say $\nu\in \mathcal{P}(S)$, 
 we write $\mathbb{P}_\theta^{\nu}$ instead of just $\mathbb{P}_\theta$. 
 To shorten the notation, let $X=(X_n)_{n\in \mathbb{N}}$, $Y=(Y_n)_{n\in \mathbb{N}}$
 and $Z=(Z_n)_{n\in \mathbb{N}}$. Further, let $\P^{\nu,Y}_{\theta}$
 and $\P^{\nu,Z}_{\theta}$ be the distributions of $Y$ and $Z$ on $(G^{\mathbb{N}},\B(G^{\mathbb{N}}))$, respectively.

%

The ``true'' underlying model  parameter will be denoted as $\theta^*\in \Theta$ and 
we assume that the transition matrix $P_{\theta^*}$ possesses a unique 
invariant distribution 
 $\pi\in\mathcal{P}(S)$. We have access to a finite length observation of
 $Z$.
 Then, the problem is to find a consistent 
 estimate of $\theta^{*}$
 on the basis of the observations
 without observing $(X_n,Y_n)_{n\in \mathbb{N}}$.
 Consistency of the estimator of $\theta^*$ is limited up to equivalence classes in
 the following sense. Two parameters $\theta_1,\theta_2\in\Theta$
 are equivalent, written as $\theta_1\sim \theta_2$, iff there exist two 
 stationary distributions $\mu_1,\mu_2\in\mathcal{P}(S)$ 
 for $P_{\theta_1}, P_{\theta_2}$, respectively,
 such that $\mathbb{P}^{\mu_1,Y}_{\theta_1} = \mathbb{P}^{\mu_2,Y}_{\theta_2}$.
For the rest of the work
assume that each $\theta\in\Theta$ represents
its equivalence class.

For an arbitrary finite measure $\nu$ on $(S,\mathcal{S})$, $t\in\N$, $x_{t+1}\in S$ and $z_1,\ldots,z_t\in G$ define
\begin{align*}
p^{\nu}_{\theta}(x_{t+1};z_1,\ldots,z_t) &\coloneqq \sum\limits_{x_1,\ldots,x_t \in S} \nu(x_1) \prod\limits_{i=1}^t f_{\theta,i}(x_i,z_i) P_{\theta}(x_i,x_{i+1}),\\
p^{\nu}_{\theta}(z_1,\ldots,z_t) &\coloneqq \sum\limits_{x_{t+1} \in S} p^{\nu}_{\theta}(x_{t+1};z_1,\ldots,z_t).
\end{align*} 
If $\nu$ is a probability measure on $(S,\mathcal{S})$, then
$p^{\nu}_{\theta}(z_1,\ldots,z_n)$
is the likelihood of the observations
 $(Z_1,\ldots,Z_n)=(z_1,\dots,z_n)\in G^n$ 
 for the inhomogeneous HMM $(X_n,Z_n)_{n\in \mathbb{N}}$ with parameter $\theta\in\Theta$
 and $X_1\sim \nu$.
 Although there are no observations of 
 $Y$
 available, we  
 define similar quantities for $(Y_1,\dots,Y_n)=(y_1,\dots,y_n)\in G^n$ by
 \begin{align*}
q^{\nu}_{\theta}(x_{t+1},y_1,\ldots,y_t) &\coloneqq \sum\limits_{x_1,\ldots,x_t \in S} \nu(x_1) \prod\limits_{i=1}^t f_{\theta}(x_i,y_i) P_{\theta}(x_i,x_{i+1}),\\
q^{\nu}_{\theta}(y_1,\ldots,y_t) &\coloneqq \sum\limits_{x_{t+1} \in S} q^{\nu}_{\theta}(x_{t+1},y_1,\ldots,y_t).
 \end{align*}

 \section{Assumptions and main result}
\label{sec: ass_and_main_res}
\noindent
Assume for a moment that observations $y_1,\dots,y_n$ of $Y_1,\dots,Y_n$
are available. Then the log-likelihood function of $q_\theta^\nu$, with initial
distribution $\nu\in \mathcal{P}(S)$, is given by
\[
 \log q_\theta^{\nu}(y_1,\dots,y_n).
\]
In our setting we do not have access to observations of $Y$, but have
access to ``contaminated'' observations $z_1,\dots,z_n$ of $Z_1,\dots,Z_n$. Based 
on these observations define
a quasi-log-likelihood function
\[
 \ell^{\,\rm Q}_{\nu,n}(\theta) := \log q_\theta^{\nu}(z_1,\dots,z_n),
\]
i.e., we plug the contaminated observations into the likelihood of $Y_1,\ldots,Y_n$.
Now we approximate $\theta^*$ by $\theta_{\nu,n}^{\,\rm QML}$
which is the QMLE,
that is,
\begin{equation}  \label{eq: max_plug_in_LE}
\theta_{\nu,n}^{\,\rm QML}\in\argmax_{\theta \in \Theta} \ell_{\nu,n}^{\,\rm Q}(\theta).
\end{equation}
In addition, we are interested in the ``true'' MLE
of a realization $z_1,\ldots,z_n$ of
 $Z_1,\ldots,Z_n$.
For this define the log-likelihood function
\[
 \ell_{\nu,n}(\theta) := \log p_\theta^{\nu}(z_1,\dots,z_n),
\]
which leads to the MLE $\theta_{\nu,n}^{\,\rm ML}$ given by
\begin{equation}  \label{eq: MLE}
 \theta_{\nu,n}^{\, \rm ML} \in \argmax_{\theta \in \Theta} \ell_{\nu,n}(\theta).
\end{equation}
Under certain structural assumptions we prove that 
the QMLE from \eqref{eq: max_plug_in_LE} 
is consistent.
By adding one more condition this result can be used to verify that the MLE from
\eqref{eq: MLE} is also consistent.

\subsection{Structural conditions} \label{sec: struct_cond}
\noindent
We prove consistency of the QMLE $\theta_{\nu,n}^{\rm\,QML}$ 
and the MLE $\theta_{\nu,n}^{\,\rm ML}$
under the following structural assumptions:
\subsubsection*{Irreducibility and continuity of $X$}
\begin{enumerate}[label=(P\arabic*)]
 \item\label{en: P1} The transition matrix $P_{\theta^*}$ is irreducible.
 \item\label{en: P2} 
 The parametrization $\theta \mapsto P_\theta$ is continuous.
\end{enumerate}

\subsubsection*{Proximity of $Y$ and $Z$}
\begin{enumerate}[label=(C\arabic*)]
\item\label{en: C1} There exists $p>1$ such that
for any $s\in S$ and $\varepsilon>0$ we have
\begin{align*}
\P_{\theta^*}\left(m(Z_n,Y_n)\geq\varepsilon\mid X_n = s\right) = \mathcal{O}(n^{-p}).
\end{align*}
(Recall that $m$ is the metric on $G$.)
\item\label{en: C2} There exists an integer $k\in\N$ such that
\begin{align}\label{eq: finite}
\P_{\theta^*}^{\pi}\left(\prod_{i=1}^{k-1} \max_{s\in S} \frac{f_{\theta^*,i}(s,Z_i)}{f_{\theta^*}(s,Z_i)}<\infty\right) &= 1,\\
\E_{\theta^*}^{\pi}\left[ \max\limits_{s'\in S}
\frac{f_{\theta^*,n}(s',Z_n)}{f_{\theta^*}(s',Z_n)}\mid X_n =s \right] 
&< \infty,\quad \forall s\in S, n\geq k, \notag
\end{align} 
and
\begin{align}
\label{al: expect}
  \limsup\limits_{n\rightarrow\infty} 
  \E_{\theta^*}^{\pi}\left[  
  \max\limits_{s'\in S}\frac{f_{\theta^*,n}(s',Z_n)}{f_{\theta^*}(s',Z_n)} \mid X_n =s \right] 
  \leq 1,\quad \forall s\in S.
\end{align}
\item\label{en: C3} For every $\theta\in \Theta$ with $\theta \not \sim \theta^*$, 
there exists a neighborhood $\mathcal{E}_{\theta}$ of $\theta$ such that there exists an integer $k\in\N$ with
\begin{align}\label{eq: theta finite}
\P_{\theta^*}^{\pi}
\left(\prod_{i=1}^{k-1} \sup\limits_{\theta'\in \mathcal{E}_{\theta}} \max_{s\in S} \frac{f_{\theta',i}(s,Z_i)}{f_{\theta'}(s,Z_i)}<\infty\right) &= 1,\\
\E_{\theta^*}^{\pi}\left[ 
\sup\limits_{\theta'\in \mathcal{E}_{\theta}} \max\limits_{s'\in S}\frac{f_{\theta',n}(s',Z_n)}{f_{\theta'}(s',Z_n)}\mid X_n =s \right] 
&< \infty,\quad \forall s\in S, n\geq k, \notag
\end{align} 
and
\begin{align}
\label{al: theta expect}
  \lim\limits_{n\rightarrow\infty} \left(\E_{\theta^*}^{\pi}\left[ \sup\limits_{\theta'\in \mathcal{E}_{\theta}}
  \max\limits_{s'\in S}  \frac{f_{\theta',n}(s',Z_n)}{f_{\theta'}(s',Z_n)}\mid X_n=s \right] \right)= 1,\quad \forall s\in S.
\end{align}

\end{enumerate}
\begin{remark}
\ref{en: C1} guarantees in particular  that $m(Z_n,Y_n)$ 
converges $\P_{\theta*}$-a.s. to zero
whereas  \ref{en: C2} ensures that the ratio of 
$p^{\nu}_{\theta^*}(z_1,\ldots,z_n)$ and 
$q^{\nu}_{\theta^*}(z_1,\ldots,z_n)$ does not diverge exponentially or faster.
Assumption \ref{en: C3} is needed to carry over the consistency
of the QMLE to the MLE. In particular it implies that 
for all $\theta\not\sim\theta^*$ the ratio of 
$p^{\nu}_{\theta}(z_1,\ldots,z_n)$ and 
$q^{\nu}_{\theta}(z_1,\ldots,z_n)$ does not diverge exponentially or faster uniformly
in $\mathcal{E}_{\theta}$.
\end{remark}

\subsubsection*{Well behaving HMM}
\noindent
It is plausible that we are only able to prove consistency in the case 
where the unobservable sequence
 $Y$ would lead to a consistent estimator of $\theta^*$, itself.
To guarantee that this is indeed the case we assume:
\begin{enumerate}[label=(H\arabic*)]
\item\label{en: H1} For all $s\in S$ let 
$\E^{\pi}_{\theta^*}\left[\abs{\log f_{\theta^*}(s,Y_1)}\right] < \infty$.
\item\label{en: H2} For every $\theta\in \Theta$ with $\theta \not \sim \theta^*$, 
there exists a neighborhood $\mathcal{U}_{\theta}$
				 of $\theta$ such that
\begin{equation*}
\E^{\pi}_{\theta^*}\left[\sup\limits_{\theta'\in\mathcal{U}_{\theta}}(\log f_{\theta'}(s,Y_1))^+\right] < \infty\qquad \text{ for all } s\in S.
\end{equation*}
\item\label{en: H3} 
The mappings $\theta\mapsto f_{\theta}(s,y)$ and $\theta\mapsto f_{\theta,n}(s,y)  $
				are continuous for any 
				$s\in S$, $n\in\mathbb{N}$ and $y\in G$.
\item \label{en: H4} 
For all $s\in S$ and $n\in \N$ let
$\E^{\pi}_{\theta^*}\left[\abs{\log f_{\theta^*,n}(s,Z_n)}\right] < \infty$.
\end{enumerate}
\begin{remark}
The conditions \ref{en: H1}--\ref{en: H3} coincide with the assumptions 
in \cite[Sect. 3.2.]{douc_consistency_2011} for finite state models and
guarantee that the MLE for $\theta^*$ based on observations of $Y$ is consistent.
The condition~\ref{en: H4} is an additional regularity assumption required for the inhomogeneous
setting.
\end{remark}

 \subsection{Consistency theorem}
\noindent
Now we formulate our main results about the consistency of the QMLE and the 
MLE. 
\begin{thm}\label{thm: main_thm}
Assume that the irreducibility and continuity conditions \ref{en: P1}, \ref{en: P2},
the proximity conditions
\ref{en: C1}, \ref{en: C2} and
the well behaving HMM conditions
\ref{en: H1}--\ref{en: H4} are satisfied. Further, let  
the initial distribution
$\nu\in\mathcal{P}(S)$ be strictly positive if and only if $\pi$ is strictly positive. 
Then 
\[
\theta^{\, \rm QML}_{\nu,n}\rightarrow\theta^*,\quad \P_{\theta^*}^{\pi}\text{-a.s.}
\]
as $n\rightarrow\infty$.
\end{thm}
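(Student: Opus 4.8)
The plan is to adapt the exponential-separability strategy of \cite{douc_consistency_2011} from the homogeneous to the inhomogeneous setting, handling the non-stationarity of the data through the theory of asymptotic mean stationary (AMS) processes. Throughout I work under $\P^{\pi}_{\theta^*}$ and study the normalized quasi-log-likelihood $n^{-1}\ell^{\,\rm Q}_{\nu,n}(\theta)=n^{-1}\log q^{\nu}_{\theta}(Z_1,\dots,Z_n)$. The first task is to show that the observation process $Z=(Z_n)_{n\in\N}$ is AMS, with stationary mean equal to the law $\P^{\pi,Y}_{\theta^*}$ of the stationary homogeneous process $Y$. Condition \ref{en: C1} with $p>1$ gives, via Borel--Cantelli, that $m(Z_n,Y_n)\to 0$ $\P^{\pi}_{\theta^*}$-a.s., so that $Z$ and its shifts are eventually arbitrarily close to the ergodic stationary process $Y$, while condition \ref{en: C2} controls the Radon--Nikodym ratio between the inhomogeneous and homogeneous conditional densities so that the discrepancy between the two models does not accumulate. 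Together these should yield an ergodic theorem for $Z$: for a suitable class of shift-measurable functionals $g$ one has $n^{-1}\sum_{i=1}^{n} g(T^{i}Z)\to \E^{\pi,Y}_{\theta^*}[g]$ a.s.

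Next, following the separability argument, I would use the irreducibility \ref{en: P1} and the continuity \ref{en: P2} of $P_\theta$ to obtain the forgetting (geometric mixing) of the hidden chain, which lets me write $\log q^{\nu}_{\theta}(Z_1,\dots,Z_n)$ as a telescoping sum of conditional log-likelihood increments $\log q^{\nu}_{\theta}(Z_k\mid Z_{1:k-1})$ that depend, up to exponentially small corrections, only on a bounded window of the observations. Applying the ergodic theorem from the previous step to these windowed functionals---whose integrability is guaranteed by \ref{en: H1} and \ref{en: H4}---shows that $n^{-1}\ell^{\,\rm Q}_{\nu,n}(\theta)$ converges $\P^{\pi}_{\theta^*}$-a.s. to a deterministic limit $\ell(\theta)$ which is exactly the contrast function one would obtain from genuine observations of $Y$. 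The hypothesis relating the supports of $\nu$ and $\pi$ ensures that the choice of initial distribution does not affect this limit.

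Finally, I would identify the limit: by the standard relative-entropy (Jensen) argument the contrast satisfies $\ell(\theta)\le \ell(\theta^*)$ with equality precisely when $\theta\sim\theta^*$, so that $\theta^*$ is, up to its equivalence class, the unique maximizer of $\ell$. Local domination of $(\log f_{\theta'})^{+}$ near any $\theta\not\sim\theta^*$, provided by \ref{en: H2} together with the continuity \ref{en: H3} and the compactness of $\Theta$, upgrades the pointwise convergence to the upper semicontinuity and one-sided uniform control needed for an argmax theorem, whence $\theta^{\,\rm QML}_{\nu,n}\to\theta^*$ $\P^{\pi}_{\theta^*}$-a.s.

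I expect the main obstacle to be the first two steps rather than the concluding identifiability argument. The delicate point is to verify that the plugged-in process $Z$ inherits, through the AMS property together with the proximity conditions \ref{en: C1} and \ref{en: C2}, precisely the ergodic limit of the unobserved $Y$; only then can the separability machinery of \cite{douc_consistency_2011}, which is built for a genuinely stationary homogeneous likelihood, be run on $q^{\nu}_{\theta}$ even though the data are generated from the inhomogeneous model. In particular, one must ensure that the ratio of $p^{\nu}_{\theta^*}$ and $q^{\nu}_{\theta^*}$ contributes negligibly to $n^{-1}\ell^{\,\rm Q}_{\nu,n}$, which is exactly what the a.s.\ finiteness of the initial product in \eqref{eq: finite} and the $\limsup\le 1$ of \eqref{al: expect} are designed to deliver.
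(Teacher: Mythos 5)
Your opening and closing steps match the paper's: Theorem~\ref{Z AMS} establishes exactly the AMS property of $\P^{\pi,Z}_{\theta^*}$ with stationary mean $\P^{\pi,Y}_{\theta^*}$ from \ref{en: C1} via Borel--Cantelli, and the final compactness/finite-subcover/Wald argument is the one used. But the middle of your argument has a genuine gap. You propose to write $\log q^{\nu}_{\theta}(Z_1,\dots,Z_n)$ as a sum of conditional increments depending on a bounded observation window ``up to exponentially small corrections'', deriving this forgetting property from \ref{en: P1} and \ref{en: P2}. Irreducibility of the hidden chain does not imply stability of the filter: uniform forgetting of the conditional chain given the observations requires a Doeblin-type minorization of $P_{\theta}$ (as in condition \ref{en: M} of the appendix, which is imposed only for asymptotic normality), and it is not available here. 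The separability approach of \cite{douc_consistency_2011}, which the paper follows, is designed precisely to avoid filter forgetting: one uses the supermultiplicative bounds \eqref{leq 1}--\eqref{leq 2} to chop the quasi-log-likelihood into blocks of fixed length $n_{\theta}$, applies the ergodic theorem for AMS processes (Corollary~\ref{cor: erg_thm_ams}) to the block sums, and invokes Lemma~\ref{Lemma 13} for the strict inequality $\frac{1}{n_{\theta}}\E_{\theta^*}^{\pi}[\sup_{\theta'}\log q^{\delta}_{\theta'}(Y_1,\dots,Y_{n_{\theta}})]<\ell(\theta^*)$. Relatedly, your claim that Jensen's inequality gives $\ell(\theta)\le\ell(\theta^*)$ ``with equality precisely when $\theta\sim\theta^*$'' buries the identifiability problem: the strict separation for $\theta\not\sim\theta^*$ is exactly the content of \cite[Theorem~12]{douc_consistency_2011}, not a routine consequence of Jensen, and in any case the paper never proves pointwise convergence of $n^{-1}\ell^{\,\rm Q}_{\nu,n}(\theta)$ for $\theta\neq\theta^*$ --- only a $\limsup$ bound on compact sets away from $\theta^*$.

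A second, related gap is the lower bound at the true parameter. The AMS property gives Ces\`aro convergence of finite-dimensional distributions and an ergodic theorem for fixed-window functionals; it does not by itself yield the a.s.\ convergence of $n^{-1}\log q^{\nu}_{\theta^*}(Z_1,\dots,Z_n)$, which is a functional of the entire sample. The paper obtains this by applying Barron's generalized Shannon--McMillan--Breiman theorem for AMS processes \cite[Theorem~3]{barron_strong_1985} to the \emph{true} inhomogeneous likelihood $p^{\pi}_{\theta^*}$ (Theorem~\ref{Barron entropy}); this is where \ref{en: H4} enters, to guarantee finiteness of the conditional mutual information (Lemma~\ref{cond mutual information}). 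Only then is the limit transferred from $p^{\pi}_{\theta^*}$ to $q^{\pi}_{\theta^*}$ by a two-sided Markov/Borel--Cantelli argument in which \eqref{eq: finite} and \eqref{al: expect} of \ref{en: C2} control the ratio (Theorem~\ref{thm: q_Z_well_defined_ell}), and finally from $\pi$ to $\nu$ using the positivity hypothesis. You correctly identify that \ref{en: C2} must keep $p^{\pi}_{\theta^*}/q^{\pi}_{\theta^*}$ subexponential, but without the SMB step for $p^{\pi}_{\theta^*}$ there is nothing to transfer.
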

Note that condition \ref{en: C3} is not required in the previous statement.
We only need it to prove the consistency of the MLE $\theta^{\rm\,ML}_{\nu,n}$.
 
\begin{coro} \label{lemma: main lemma} 
Assume that the setting and conditions of 
Theorem \ref{thm: main_thm} and \ref{en: C3} are satisfied.
Then
\[
\theta^{\rm\,ML}_{\nu,n}\rightarrow\theta^*,\quad \P_{\theta^*}^{\pi}\text{-a.s.}
\]
as $n\rightarrow\infty$.
\end{coro}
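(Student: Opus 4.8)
The plan is to recycle the architecture behind the proof of Theorem~\ref{thm: main_thm} and merely transport it from the quasi-likelihood $q^\nu_\theta$ to the genuine likelihood $p^\nu_\theta$. That proof produces two facts I would reuse verbatim: writing $\ell^*=\lim_n \frac1n\log q^\nu_{\theta^*}(Z_1,\dots,Z_n)$ for the $\P^\pi_{\theta^*}$-a.s.\ limit, one has this convergence at $\theta^*$, and for every $\theta\not\sim\theta^*$ a neighborhood $\mathcal{U}_\theta$ with $\limsup_n \sup_{\theta'\in\mathcal{U}_\theta}\frac1n\log q^\nu_{\theta'}(Z_1,\dots,Z_n)<\ell^*$. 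The bridge between $p$ and $q$ is the deterministic sandwich obtained by factoring $f_{\theta,i}(x_i,Z_i)/f_\theta(x_i,Z_i)$ out of each summand of the path sum defining $p^\nu_\theta$ and bounding it uniformly in $x_i\in S$:
\[
\prod_{i=1}^n \min_{s\in S}\frac{f_{\theta,i}(s,Z_i)}{f_\theta(s,Z_i)}
\;\le\;
\frac{p^\nu_\theta(Z_1,\dots,Z_n)}{q^\nu_\theta(Z_1,\dots,Z_n)}
\;\le\;
\prod_{i=1}^n \max_{s\in S}\frac{f_{\theta,i}(s,Z_i)}{f_\theta(s,Z_i)}.
\]
After taking logarithms and dividing by $n$, the corollary reduces to two one-sided statements: a lower bound for $\frac1n\log p^\nu_{\theta^*}$ at the truth and a uniform upper bound for $\frac1n\log p^\nu_{\theta'}$ near each wrong parameter.

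For the lower bound at $\theta^*$ I would avoid the $\min$-ratio entirely and instead exploit that $q^\nu_{\theta^*}(\cdot)$ is itself a probability density on $G^n$, whereas $(Z_1,\dots,Z_n)$ has density $p^\pi_{\theta^*}(\cdot)$ under $\P^\pi_{\theta^*}$. Hence $\E^\pi_{\theta^*}[\,q^\nu_{\theta^*}(Z_1,\dots,Z_n)/p^\pi_{\theta^*}(Z_1,\dots,Z_n)\,]=\int_{G^n} q^\nu_{\theta^*}\,\dint\lambda^{\otimes n}=1$, and Markov's inequality together with Borel--Cantelli give $\liminf_n \frac1n\log\big(p^\pi_{\theta^*}/q^\nu_{\theta^*}\big)\ge 0$ a.s. Forgetting of the initial distribution — this is exactly where the hypothesis tying the support of $\nu$ to that of $\pi$ is used — upgrades $p^\pi_{\theta^*}$ to $p^\nu_{\theta^*}$, so that, in combination with $\frac1n\log q^\nu_{\theta^*}\to\ell^*$, one gets $\liminf_n \frac1n\log p^\nu_{\theta^*}(Z_1,\dots,Z_n)\ge \ell^*$.

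For the upper bound near a wrong parameter $\theta$ I would take $\mathcal{V}_\theta=\mathcal{U}_\theta\cap\mathcal{E}_\theta$, with $\mathcal{E}_\theta$ from \ref{en: C3}, and use the right-hand sandwich to get $\sup_{\theta'\in\mathcal{V}_\theta}\frac1n\log p^\nu_{\theta'} \le \sup_{\theta'\in\mathcal{V}_\theta}\frac1n\log q^\nu_{\theta'} + \frac1n\sum_{i=1}^n \log W_i$, where $W_i=\sup_{\theta'\in\mathcal{E}_\theta}\max_{s\in S} f_{\theta',i}(s,Z_i)/f_{\theta'}(s,Z_i)$. The first term has $\limsup<\ell^*$ by the reused fact, so it remains to bound the correction. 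Here I would linearize with $\log W_i\le W_i-1$ and feed in \ref{en: C3}, specifically \eqref{al: theta expect}, which forces $\E^\pi_{\theta^*}[W_i\mid X_i=s]\to 1$; granting the resulting almost-sure estimate $\limsup_n\frac1n\sum_{i=1}^n(W_i-1)\le 0$ (the crux, discussed below) we obtain $\limsup_n\sup_{\theta'\in\mathcal{V}_\theta}\frac1n\log p^\nu_{\theta'}<\ell^*$. A standard compactness/covering argument then closes the proof: for $\epsilon>0$ the compact set $\{\theta:\abs{\theta-\theta^*}\ge\epsilon\}$ (all of whose points are $\not\sim\theta^*$, since each $\theta$ represents its class) is covered by finitely many $\mathcal{V}_{\theta^{(j)}}$, on each of which $\frac1n\log p^\nu$ is asymptotically below $\ell^*$, while at $\theta^*$ it is asymptotically at least $\ell^*$; hence the maximizer $\theta^{\rm\,ML}_{\nu,n}$ eventually leaves that set.

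The hard part is the almost-sure Cesàro estimate $\limsup_n\frac1n\sum_{i=1}^n(W_i-1)\le 0$. Condition~\ref{en: C3} only controls the $W_i$ in conditional expectation, and the $W_i$ are genuinely time-inhomogeneous functions of $Z_i$ (the density $f_{\theta',i}$ changes with $i$), so a plain ergodic theorem does not apply to the summands directly. I expect to split $W_i$ into its $i\to\infty$ limiting value, handled by the asymptotic mean stationarity of $(X_n,Z_n)_{n\in\N}$ and the associated ergodic theorem established for Theorem~\ref{thm: main_thm}, and a vanishing perturbation whose mean is driven to zero by \eqref{al: theta expect}; turning the latter into an almost-sure statement, together with the a.s.\ finiteness of the initial product in \ref{en: C3}, is the delicate point.
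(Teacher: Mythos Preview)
Your overall architecture matches the paper's: reduce to the local separation inequality \eqref{eq: ML to show}, compare $p^\nu_\theta$ to $q^\nu_\theta$ through the deterministic sandwich in the ratio $\max_s f_{\theta,i}(s,Z_i)/f_\theta(s,Z_i)$, and recycle \eqref{eq: to show} from Theorem~\ref{thm: main_thm} for the $q$-part. Your treatment of the lower bound at $\theta^*$ via $\E^\pi_{\theta^*}[q^\nu_{\theta^*}/p^\pi_{\theta^*}]=1$ plus Markov/Borel--Cantelli is a legitimate shortcut (the paper instead invokes Theorem~\ref{Barron entropy} directly), and the change of initial distribution is handled the same way.

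The genuine gap is exactly where you flag it: the almost-sure control of the correction term. Your linearization $\log W_i\le W_i-1$ reduces the problem to $\limsup_n\frac1n\sum_i(W_i-1)\le 0$ a.s., but \ref{en: C3} gives you only $\E^\pi_{\theta^*}[W_i\mid X_i=s]\to 1$, i.e.\ first-moment control, and no variance or higher-moment bound. An SLLN for non-identically distributed summands (Kolmogorov, or a martingale-difference version) would require such control; your proposed splitting into a stationary limit plus a perturbation with mean tending to zero does not, by itself, upgrade convergence of means to an a.s.\ Ces\`aro bound. So as written the argument does not close.

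The paper avoids this by \emph{not} linearizing. Instead it applies Markov's inequality to the product itself: for $\varepsilon>0$,
\[
\P^\pi_{\theta^*}\!\left(\frac1n\sum_{i=k}^n\log W_i\ge\varepsilon\right)
=\P^\pi_{\theta^*}\!\left(\prod_{i=k}^n W_i\ge e^{n\varepsilon}\right)
\le e^{-n\varepsilon}\,\E^\pi_{\theta^*}\!\left[\prod_{i=k}^n W_i\right].
\]
The key observation is that, conditionally on $X_k,\ldots,X_n$, the $Z_i$ (and hence the $W_i$) are independent, so by the tower property
\[
\E^\pi_{\theta^*}\!\left[\prod_{i=k}^n W_i\right]
=\E^\pi_{\theta^*}\!\left[\prod_{i=k}^n \E^\pi_{\theta^*}[W_i\mid X_i]\right]
\le \prod_{i=k}^n \max_{s\in S}\E^\pi_{\theta^*}[W_i\mid X_i=s],
\]
and \eqref{al: theta expect} forces $\frac1n\sum_{i=k}^n\log\max_s\E^\pi_{\theta^*}[W_i\mid X_i=s]\to 0$ (this is Lemma~\ref{theta slower than expoential}). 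The probability above is thus at most $\exp(n(o(1)-\varepsilon))$, which is summable, and Borel--Cantelli yields $\limsup_n\frac1n\sum_{i=k}^n\log W_i\le 0$ a.s.; the first $k-1$ factors are handled by \eqref{eq: theta finite}. In short: work multiplicatively and exploit conditional independence to factor the expectation --- then only the first conditional moment in \ref{en: C3} is needed, and no SLLN-type argument is required.
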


 \section{Application}
 \label{sec: appl}
\noindent
 We consider two models where we explore the
 structural assumptions from Section~\ref{sec: struct_cond} explicitly.
 The Poisson model, see Section~\ref{Poisson models with Poisson errors},
 illustrates a simple example with countable observation space.
 The linear Gaussian model is an extension of the model 
introduced in \eqref{dyn} and \eqref{eq: Y_ion} to 
multivariate and possibly correlated observations.

\subsection{Poisson DHMM}\label{Poisson models with Poisson errors}
\noindent
For $i=1,\dots,K$ let $\lambda_{\theta^*}^{(i)}>0$ and define the vector
$\lambda_{\theta^*}=(\lambda_{\theta^*}^{(1)},\ldots,\lambda_{\theta^*}^{(K)})$.
Conditioned on $X$ the non-observed homogeneous sequence $Y=(Y_n)_{n\in\N}$ is 
an independent sequence of Poisson-distributed random variables with parameter $\lambda_{\theta^*}^{(X_n)}$. 
In other words, given $X_n=x_n$ we have $Y_n\sim \Poi(\lambda_{\theta^*}^{(x_n)})$. 
Here $\Poi(\alpha)$ denotes the Poisson distribution with expectation $\alpha>0$.
The observed sequence $Z=(Z_n)_{n\in\N}$ is determined by
\[
Z_n = Y_n + \varepsilon_n,
\]
where $(\varepsilon_n)_{n\in\N}$ is an independent sequence of random variables with 
$\epsilon_n \sim \Poi(\beta_n)$. 
Here $(\beta_n)_{n\in\N}$ is a sequence of positive real numbers 
satisfying for some $p>1$ that
\begin{equation}\label{Poisson beta}
\beta_n = \mathcal{O}(n^{-p}).
\end{equation}
We also assume that $(\varepsilon_n)_{n\in\N}$ is independent of 
$Y$ and that the parameter
$\theta$ determines the transition matrix $P_{\theta}$ and the intensity $\lambda_{\theta}$ continuously. 
Note that 
the observation space is given by $G=\N\cup\{0\}$ equipped with 
the counting measure $\lambda$. 
Figures \ref{Pois example} illustrates the empirical mean square error
of approximations of the MLEs.
\begin{figure}[!htb]
	\centering
  \includegraphics[width=0.9\textwidth]{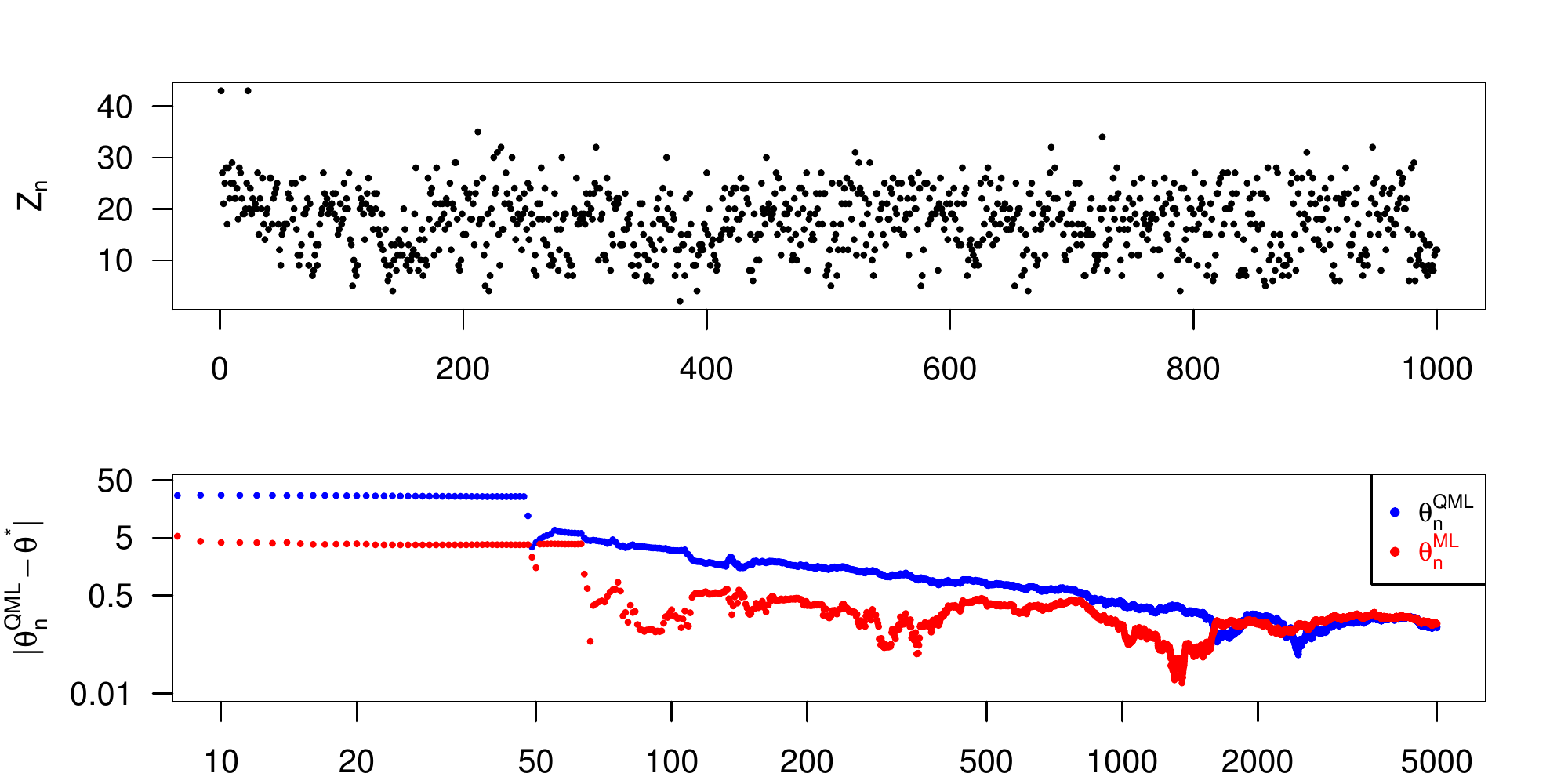}
	\caption{
	Exemplary trajectory of observations of the Poisson model from 
	Section~\ref{Poisson models with Poisson errors} (above) 
	and Euclidean norm of the difference of $\theta^*$ and 
	the estimators  based on a single trajectory of observations (below).  Here 
	$K=2$, $n=5\cdot10^3$ and $\theta^* = (10,20,0.8,0.1)$. The parameter $\theta^*$ determines 
	$\lambda_{\theta^*} = (10,20)$ 
	and the ``true'' transition matrix by $P_{\theta^*}(1,1) = 0.8$, $P_{\theta^*}(2,1) = 0.1$.
	The inhomogeneous noise is driven by an intensity $\beta_n = 40\, n^{-1.01}$.
	}
	\label{Pois example}
\end{figure}

\begin{figure}[!htb]
	\centering
  \includegraphics[width=0.8\textwidth]{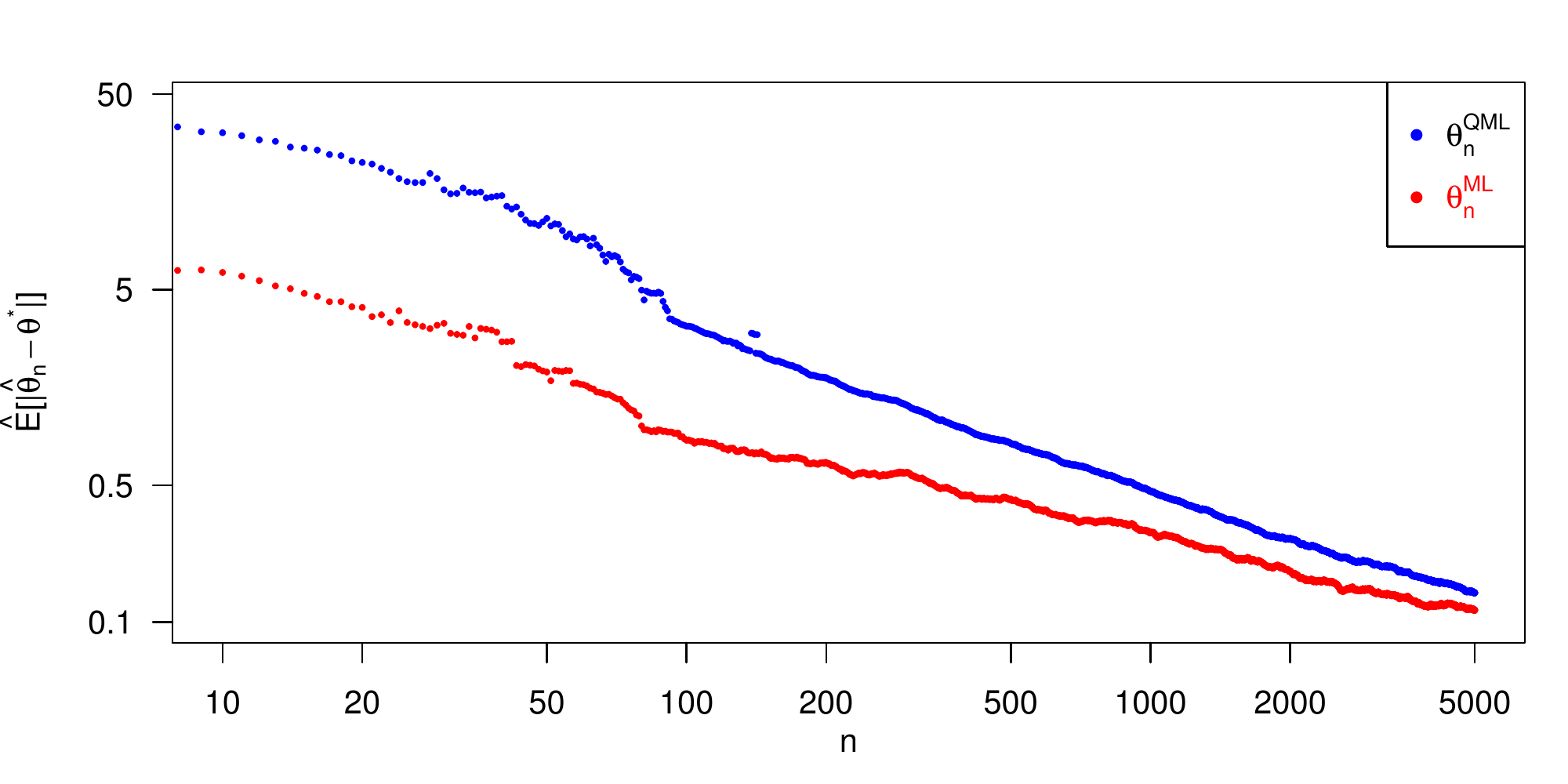}
	\caption{
	Empirical mean of the Euclidean norm of the difference of $\theta^*$ and the estimators
	based on $100$ i.i.d. replications of the DHMM.
	Here 
	$K=2$, $n=5\cdot10^3$ and $\theta^* = (10,20,0.8,0.1)$. The parameter $\theta^*$ determines 
	$\lambda_{\theta^*} = (10,20)$ 
	and the ``true'' transition matrix by $P_{\theta^*}(1,1) = 0.8$, $P_{\theta^*}(2,1) = 0.1$.
	The inhomogeneous noise is driven by an intensity $\beta_n = 40\, n^{-1.01}$.
	}
	\label{Pois example mean}
\end{figure}

To obtain the desired consistency of the two estimators we need to check
the conditions \ref{en: P1}, \ref{en: P2}, \ref{en: C1}--\ref{en: C3} 
and \ref{en: H1}--\ref{en: H4}:\\[-1ex]

 \noindent
\emph{To \ref{en: P1} and \ref{en: P2}:} By the assumptions in this scenario
those conditions are satisfied.\\[-1ex]

\noindent
\emph{To \ref{en: H1}--\ref{en: H4}:} 
For $\theta\in\Theta$, $s\in S$ and $y\in G$ we have
\begin{align*}
\abs{\log f_{\theta}(s,y)} = -\log \left(\frac{\left(\lambda_{\theta}^{(s)}\right)^{y}}{y!} \exp(-\lambda_{\theta}^{(s)})\right)
&= - y \log (\lambda_{\theta}^{(s)}) + \log(y!) + \lambda_{\theta}^{(s)}\\
&\leq -y \log(\lambda_{\theta}^{(s)}) + y^2 + \lambda_{\theta}^{(s)}.
\end{align*}
Hence
\begin{align*}
&\E_{\theta^*}^{\pi}\left[\abs{\log f_{\theta^*}(s,Y_1)}\right] \\
& \leq  -\log(\lambda_{\theta^*}^{(s)}) \sum_{s=1}^K \pi (s) \lambda_{\theta^*}^{(s)}
  +
 \sum_{s=1}^K \pi (s) \left(\left(\lambda_{\theta^*}^{(s)}\right)^2 + 
 \lambda_{\theta^*}^{(s)}  \right) + \lambda_{\theta^*}^{(s)}<\infty
\end{align*}
and \ref{en: H1} is verified. A similar calculation gives \ref{en: H4}.
Condition \ref{en: H2} follows simply by $(\log f_{\theta}(s,y))^+=0$.
Condition \ref{en: H3} follows by the continuity in 
the parameter of the probability function of
the Poisson distribution and the continuity 
of the mapping $\theta\mapsto (P_\theta,\lambda_\theta)$.
\\[-1ex]

\noindent
\emph{To \ref{en: C1} -- \ref{en: C3}:} 
For any $\delta>0$
and any $s\in S$ we have
\begin{align*}
\P_{\theta}^{\pi}\left(\abs{Z_n-Y_n}\geq \delta\mid X_n = s\right) 
= \P_{\theta}^{\pi}\left(\abs{\varepsilon_n}\geq \delta\right)
\leq 1 - \P_{\theta}^{\pi}\left(\varepsilon_n =0 \right)
= 1 -\exp(-\beta_n).
\end{align*}
From \eqref{Poisson beta} it follows that
\[
1-\exp(-\beta_n) = \mathcal{O}(n^{-p}),
\]
which proves \ref{en: C1}. Observe that for any $s\in S,z\in G$ we have
\begin{align*}
\max\limits_{s\in S}\frac{f_{\theta^*,n}(s,z)}{f_{\theta^*}(s,z)}
=\max\limits_{s\in S} \frac{\left(\beta_n+ \lambda_{\theta^*}^{(s)}\right)^{z} }{\left( \lambda_{\theta^*}^{(s)}\right)^{z}}\exp(-\beta_n)
= \left( a_n\right)^{z}\exp(-\beta_n),
\end{align*}
with $a_n =  \max\limits_{s\in S} \frac{\beta_n+ \lambda_{\theta^*}^{(s)}}{\lambda_{\theta^*}^{(s)}}$. 
Now we verify \ref{en: C2} with $k=1$. 
For all $n\in\N$ and $s\in S$ we have
\begin{align*}
\E_{\theta^*}^{\pi}\left[\max\limits_{s'\in S}\frac{f_{\theta^*,n}(s',Z_n)}{f_{\theta^*}(s',Z_n)}\mid X_n = s\right]
&= \E_{\theta^*}^{\pi}\left[a_n^{Z_n} \exp(-\beta_n)\mid X_n = s\right] \\
&= \exp\left((\lambda_{\theta^*}^{(s)}+\beta_n)(a_n-1)-\beta_n\right)<\infty.
\end{align*}
Fix $s\in S$, and note that
\begin{align*}
&\limsup\limits_{n\rightarrow\infty} 
\E_{\theta^*}^{\pi}\left[  
\max\limits_{s'\in S}\frac{f_{\theta^*,n}(s',Z_n)}{f_{\theta^*}(s',Z_n)} \mid X_n=s \right] \\
& = \limsup\limits_{n\rightarrow\infty} \exp\left((\lambda_{\theta^*}^{(s)}+\beta_n)(a_n-1)-\beta_n\right)=1.
\end{align*}

The last equality follows by the fact that $\lim_{n\to \infty} a_n =1$ and $\lim_{n\to \infty}\beta_n = 0$.
Condition \ref{en: C3} follows by similar arguments.

The application of Theorem~\ref{thm: main_thm} and Corollary~\ref{lemma: main lemma}
leads to the following result.

\begin{coro}
 For any initial distribution $\nu\in\mathcal{P}(S)$ which is strictly positive if 
 and only if $\pi$ is strictly positive, we have
 for the Poisson DHMM if \eqref{Poisson beta} holds for some $p>1$ that
\[
\theta^{\,\rm QML}_{\nu,n}\rightarrow \theta^*, \quad \P_{\theta^*}^{\pi}\text{-a.s.}
\]
and 
\[
\theta^{\,\rm ML}_{\nu,n}\rightarrow \theta^*, \quad \P_{\theta^*}^{\pi}\text{-a.s.}
\]
as $n\rightarrow\infty$.
\end{coro}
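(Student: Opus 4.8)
The plan is to reduce the assertion to an application of Theorem~\ref{thm: main_thm} (for the QMLE) and Corollary~\ref{lemma: main lemma} (for the MLE); the entire task is then to verify the structural conditions \ref{en: P1}, \ref{en: P2}, \ref{en: C1}--\ref{en: C3} and \ref{en: H1}--\ref{en: H4} for the concrete Poisson DHMM. The conditions \ref{en: P1} and \ref{en: P2} hold directly by the standing model assumptions, namely irreducibility of $P_{\theta^*}$ and continuity of $\theta \mapsto (P_\theta,\lambda_\theta)$.

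For the well-behaving-HMM conditions I would exploit the explicit mass function $f_\theta(s,y) = (\lambda_\theta^{(s)})^{y} e^{-\lambda_\theta^{(s)}}/y!$. Since a Poisson mass is bounded by $1$, we get $(\log f_\theta(s,y))^+ = 0$, which immediately settles \ref{en: H2}. For \ref{en: H1} and \ref{en: H4} the crude bound $\abs{\log f_\theta(s,y)} \le -y\log \lambda_\theta^{(s)} + y^2 + \lambda_\theta^{(s)}$ reduces integrability to finiteness of the first two moments of $Y_1$ (respectively $Z_n$) under $\P_{\theta^*}^{\pi}$, which hold because a finite mixture of Poisson laws has all moments. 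Continuity in \ref{en: H3} is inherited from continuity of the Poisson mass in its intensity together with that of $\theta \mapsto (P_\theta,\lambda_\theta)$. For \ref{en: C1} the key observation is that $Z_n = Y_n + \varepsilon_n$ with $\varepsilon_n \sim \Poi(\beta_n)$ independent of $Y$, so on $\{X_n = s\}$ one has $\P_{\theta^*}^{\pi}(\abs{Z_n - Y_n}\ge \delta) \le 1 - \P(\varepsilon_n = 0) = 1 - e^{-\beta_n}$, and $1 - e^{-\beta_n} = \mathcal{O}(\beta_n) = \mathcal{O}(n^{-p})$ by \eqref{Poisson beta}.

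The heart of the argument, and the step I expect to be the main obstacle, is verifying \ref{en: C2} (and, analogously, \ref{en: C3}). The crucial structural fact is that the convolution of independent Poisson variables is again Poisson, so conditionally on $X_n = s$ one has $Z_n \sim \Poi(\lambda_{\theta^*}^{(s)} + \beta_n)$; hence the density ratio computes in closed form as
\[
\frac{f_{\theta^*,n}(s,z)}{f_{\theta^*}(s,z)} = \left(\frac{\lambda_{\theta^*}^{(s)}+\beta_n}{\lambda_{\theta^*}^{(s)}}\right)^{z} e^{-\beta_n},
\]
so that $\max_{s} f_{\theta^*,n}(s,z)/f_{\theta^*}(s,z) = a_n^{z} e^{-\beta_n}$ with $a_n \to 1$. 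The conditional expectation in \eqref{al: expect} is then evaluated through the Poisson probability generating function $\E[t^{W}] = e^{\mu(t-1)}$ for $W \sim \Poi(\mu)$, giving $\E_{\theta^*}^{\pi}[a_n^{Z_n} e^{-\beta_n} \mid X_n = s] = \exp((\lambda_{\theta^*}^{(s)}+\beta_n)(a_n - 1) - \beta_n)$, which is finite for every $n$ and tends to $1$ as $n\to\infty$ since $a_n \to 1$ and $\beta_n \to 0$. Choosing $k = 1$ renders the product condition \eqref{eq: finite} vacuous.

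Condition \ref{en: C3} follows along the same lines after replacing the ratio by its supremum over a small neighborhood $\mathcal{E}_\theta$ and using continuity of $\theta \mapsto \lambda_\theta$ to keep the corresponding supremum of $a_n$ controlled; the delicate point here is that \eqref{al: theta expect} demands the limit be exactly $1$ rather than merely finite, so $\mathcal{E}_\theta$ must be chosen small enough. Once all conditions are in place, Theorem~\ref{thm: main_thm} yields consistency of the QMLE and Corollary~\ref{lemma: main lemma} that of the MLE.
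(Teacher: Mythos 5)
Your proposal is correct and follows essentially the same route as the paper: both reduce the claim to Theorem~\ref{thm: main_thm} and Corollary~\ref{lemma: main lemma}, verify \ref{en: H1}--\ref{en: H4} via the crude bound $\abs{\log f_\theta(s,y)}\le -y\log\lambda_\theta^{(s)}+y^2+\lambda_\theta^{(s)}$ and $(\log f_\theta)^+=0$, obtain \ref{en: C1} from $1-e^{-\beta_n}=\mathcal{O}(n^{-p})$, and settle \ref{en: C2} with $k=1$ by the closed-form ratio $a_n^{z}e^{-\beta_n}$ and the Poisson generating function, yielding $\exp((\lambda_{\theta^*}^{(s)}+\beta_n)(a_n-1)-\beta_n)\to 1$. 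The paper likewise dispatches \ref{en: C3} by ``similar arguments,'' so no further comparison is needed.
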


\subsection{Multivariate linear Gaussian DHMM}\label{linear Gaussian models}
\noindent
For $i=1,\dots,K$ let $\mu_{\theta^*}^{(i)}\in \R^M$, 
$\Sigma_{\theta^*}^{(i)}\in \R^{M\times M}$ with full rank, where $M\in\N$. Define
$\mu_{\theta^*}=(\mu_{\theta^*}^{(1)},\ldots,\mu_{\theta^*}^{(K)})$
as well as $\Sigma_{\theta^*} = (\Sigma_{\theta^*}^{(1)},\ldots,\Sigma_{\theta^*}^{(K)}).$
 The sequences $Y=(Y_n)_{n\in\N}$
and $Z=(Z_n)_{n\in\N}$ are defined by
\begin{align*}
    Y_n & = \mu_{\theta^*}^{(X_n)} + \Sigma_{\theta^*}^{(X_n)} V_n \\
    Z_n & = Y_n + \varepsilon_n.
\end{align*}
Here $(V_n)_{n\in\N}$ is an i.i.d. sequence of random vectors with 
$V_n\sim\normal(0,I)$, where $I\in \R^{M\times M}$ denotes the identity matrix, 
and $(\varepsilon_n)_{n\in\N}$ is a sequence of independent random vectors with
$\varepsilon_n\sim  \normal(0,\beta_n^2 I)$, where $(\beta_n)_{n\in\N}$ is a positive real-valued sequence 
satisfying for some $q>0$
that
\begin{equation}\label{beta}
\beta_n = \mathcal{O}(n^{-q}).
\end{equation}
Here we also assume that the mapping $\theta\mapsto (P_{\theta},\mu_{\theta},\Sigma_{\theta})$ 
is continuous.
Furthermore, note that $G=\R^M$ 
and $\lambda$ is the $M$-dimensional Lebesgue measure. 
Figures \ref{LGM example} illustrates the empirical mean square error of approximations 
of the MLEs.

\begin{figure}[!htb]
	\centering
  \includegraphics[width=0.9\textwidth]{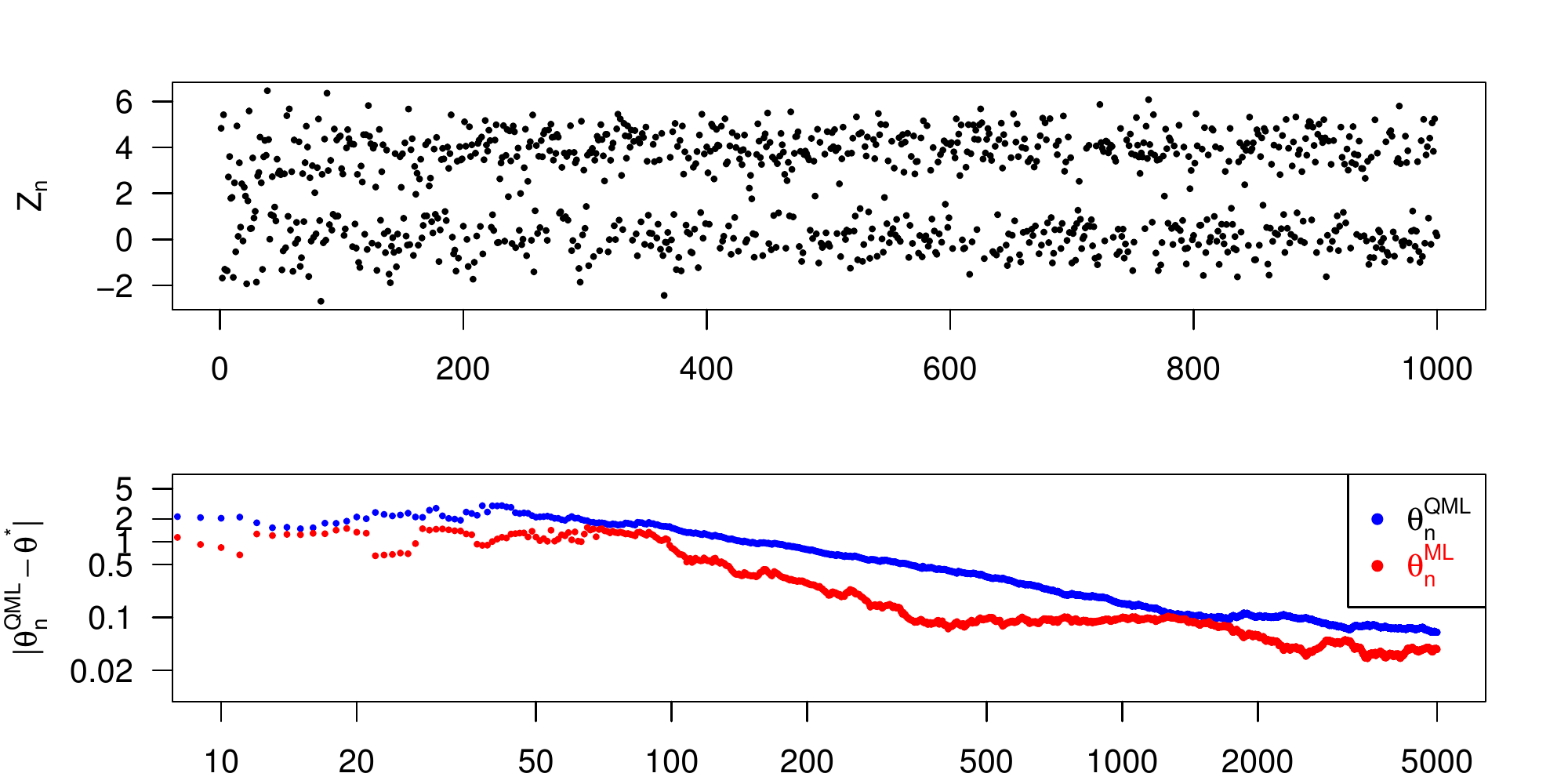}
	\caption{
	Exemplary trajectory of observations of the  linear Gaussian model from 
	Section~\ref{linear Gaussian models} (above) 
	and Euclidean norm of the difference of $\theta^*$ and the estimators 
	 based on a single trajectory of observations (below).
	Here 
	$M=1$, $K=2$, $n= 5\cdot 10^3$ and $\theta^* = (0,4,0.5,0.5,0.4,0.5)$.
	The parameter $\theta^*$ determines
	$\mu_{\theta^*} = (0,4)$, $\Sigma_{\theta^*} = (0.5,0.5)$ and the ``true'' transition matrix by 
	$P_{\theta^*}(1,1) = 0.4$, $P_{\theta^*}(2,1) = 0.5$. 
	The inhomogeneous noise is driven by an intensity $\beta_n = 10\, n^{-0.75}$.}
	\label{LGM example}
\end{figure}

\begin{figure}[!htb]
	\centering
  \includegraphics[width=0.8\textwidth]{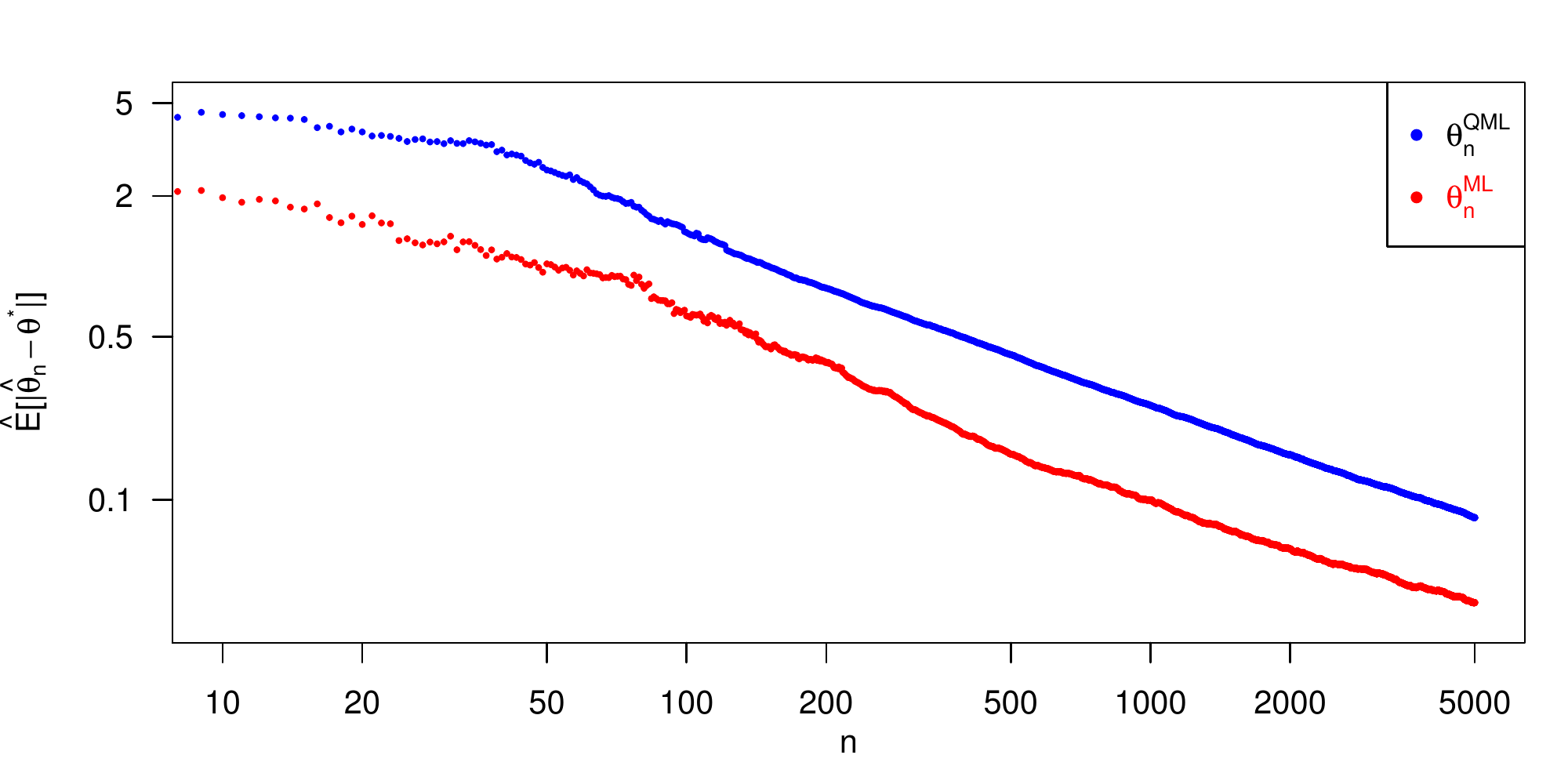}
	\caption{
	Empirical mean of the Euclidean norm of the difference of $\theta^*$ 
	and the estimators norm based on $100$ i.i.d. replications of the DHMM.
	Here 
	$M=1$, $K=2$, $n= 5\cdot 10^3$ and $\theta^* = (0,4,0.5,0.5,0.4,0.5)$.
	The parameter $\theta^*$ determines
	$\mu_{\theta^*} = (0,4)$, $\Sigma_{\theta^*} = (0.5,0.5)$ and the ``true'' transition matrix by 
	$P_{\theta^*}(1,1) = 0.4$, $P_{\theta^*}(2,1) = 0.5$. 
	The inhomogeneous noise is driven by an intensity $\beta_n = 10\, n^{-0.75}$.}
	\label{GLM example mean}
\end{figure}

To obtain consistency of the two estimators we need to check
the conditions \ref{en: P1}, \ref{en: P2}, \ref{en: C1}--\ref{en: C3} 
and \ref{en: H1}--\ref{en: H4}:\\[-1ex]

\noindent
\emph{To \ref{en: P1} and \ref{en: P2}:} 
By definition of the model this conditions are satisfied. \\[-1ex]

\noindent
\emph{To \ref{en: H1}--\ref{en: H4}:} 
For a matrix $A\in \R^{M\times M}$ denote $A^2 = AA^T$ and $A^{-2}=(A^2)^{-1}$. 
Note that for $s\in S$, $\theta\in \Theta$ and $y,z\in G$ we have
by
\begin{align*}
f_{\theta}(s,y) 
&= \frac{(2\pi)^{-M/2}}{\det\left(\left(\Sigma_{\theta}^{(s)}\right)^2\right)^{1/2}} 
\exp\left(-\frac{1}{2} (y - \mu_{\theta}^{(s)})^T \left(\Sigma_{\theta}^{(s)}\right)^{-2}(y - \mu_{\theta}^{(s)})\right),\\
f_{\theta,n}(s,z) &= \frac{(2\pi)^{-M/2}}{ \det\left(\left(\Sigma_{\theta}^{(s)}\right)^2
+ \beta_n^2 I\right)^{1/2}} \\ 
& \qquad \qquad \qquad \times \exp\left(-\frac{1}{2} (z - \mu_{\theta}^{(s)})^T 
\left(\left(\Sigma_{\theta}^{(s)}\right)^2+ \beta_n^2 I\right)^{-1}(z - \mu_{\theta}^{(s)})\right).
\end{align*} 
Further, observe that $\det\left(\left(\Sigma_{\theta}^{(s)}\right)^2\right)>0$  
for all $s\in S$. For some constant $C_1>0$ 
we have
\begin{align*}
\E_{\theta^*}^{\pi}\left[\abs{\log f_{\theta}(s,Y_1) }\right] 
&\leq C_1 + \E_{\theta^*}^{\pi}\left[ \frac{1}{2} (Y_1 -  \mu_{\theta}^{(s)})^T 
\left(\Sigma_{\theta}^{(s)}\right)^{-2}(Y_1 -  \mu_{\theta}^{(s)})\right] 
< \infty,
\end{align*}
since for each $i,j\in \{1,\dots,M\}$ 
we have $\E_{\theta^*}^{\pi}\left[Y_1^{(i)} Y_1^{(j)} \right]<\infty$
with the notation \mbox{$Y_1=(Y_1^{(1)},\dots,Y_1^{(M)})$}. 
By this estimate \ref{en: H1}
and \ref{en: H2} follows easily.
Condition \ref{en: H4} follows by similar arguments. More detailed, we have that $\beta_n^2$ is 
finite and converges to zero as well, as that 
there exists a constant $C_2>0$ such that 
\begin{align*}
\E_{\theta^*}^{\pi}\left[\abs{\log f_{\theta^*,n}(s,Z_n) }\right] 
&\leq C_2 + \E_{\theta^*}^{\pi}\left[ \frac{1}{2} (Z_n - \mu_s)^T 
\left(\left(\Sigma_{\theta}^{(s)}\right)^2 + \beta_n^2 I\right)^{-1}(Z_n - \mu_s)\right]. 
\end{align*}
For all $n\in\N$ the right-hand side of the previous inequality is finite, since 
for each $i,j\in\{1,\dots,M\}$ we have $\E_{\theta^*}^{\pi}\left[Z_n^{(i)} Z_n^{(j)} \right]<\infty$,
with $Z_n=(Z_n^{(1)},\dots,Z_n^{(M)})$.
Finally condition \ref{en: H3} is satisfied by the continuity of the conditional density and the continuity
of the mapping $\theta\mapsto (P_\theta,\mu_\theta,\Sigma_\theta)$.\\[-1ex]

\noindent
\emph{To \ref{en: C1} -- \ref{en: C3}:}  
Here $m$ is the Euclidean metric in $\R^M$ such that 
$\abs{\varepsilon_n} = m(Y_n,Z_n)$.
Fix some $r>0$ with $r/q>1$  and observe that
for any $\delta>0$ and $s\in S$ we have
\begin{align*}
\P_{\theta^*}^{\pi}\left( m(Y_n,Z_n)>\delta\mid X_n=s  \right)
=\P_{\theta^*}^{\pi}\left( \abs{\varepsilon_n}>\delta  \right)
=\P_{\theta^*}^{\pi}\left(\beta_n^r \abs{N}^r>\delta^r  \right)
\leq \frac{\E_{\theta^*}^{\pi}\left[ \abs{N}^r  \right]\beta_n^r}{\delta^{r}},
\end{align*}
where $N\sim\normal(0,I)$. 
By the fact that $\E_{\theta^*}^{\pi}\left[ \abs{N}^r  \right]<\infty$
and \eqref{beta} we obtain that condition \ref{en: C1} is satisfied with $p=r/q>1$. 

The requirement of \eqref{eq: finite} of \ref{en: C2} holds for any $k\in\N$,
since the density of normally distributed random vectors is strictly positive and finite. 
Observe that
\begin{align*}
&\max\limits_{s\in S} \frac{f_{\theta,n}(s,Z_n)}{f_{\theta}(s,Z_n)}\leq \\
&\qquad C_n \max\limits_{s\in S}
\exp\left( -\frac{1}{2}  (Z_n-\mu_{\theta}^{(s)})^T\left((
(\Sigma^{(s)}_\theta)^2+\beta_n^2 I)^{-1} -(\Sigma^{(s)}_\theta)^{-2}\right)(Z_n-\mu_{\theta}^{(s)})  \right),
\end{align*}
with
\[
C_n := 
\max\limits_{s\in S}\frac{\left(\det\left(\left(\Sigma_{\theta}^{(s)}\right)^2\right)\right)^{1/2}}
{\left(\det\left(\left(\Sigma_{\theta}^{(s)}\right)^2+\beta_n^2I\right) \right)^{1/2}}.
\]
Note that $\lim_{n\rightarrow\infty}C_n =1$. 
Since for an invertible matrix $A\in\R^{M\times M}$, $A\mapsto A^{-1}$ 
is continuous and $\Sigma_{\theta^*}^{s}$ has full rank, it follows that
\[
\lim\limits_{n\rightarrow\infty} 
\left( \left(\Sigma_{\theta}^{(s)}\right)^2+\beta_n^2I \right)^{-1} 
=\left( \Sigma_{\theta}^{(s)}\right)^{-2}.
\]f
Set $(\Sigma_{\theta}^{(s)})^2_n := (\Sigma_{\theta}^{(s)})^2 + \beta_n^2 I$
and define  $B_n = B_{n,s} :=  (\Sigma_{\theta}^{(s)})^{-2} - (\Sigma_{\theta}^{(s)})_n^{-2} $. 
Note that the entries of $B_n$ converge to zero when $n$
goes to infinity.
 
Further, by the fact that $(B_n)_{n\in\N}$ 
is a sequence of symmetric, positive definite matrices there exist sequences of  
orthogonal matrices $(U_n)_{n\in\N}\subset \R^{M\times M}$ 
and diagonal matrices $(D_n)_{n\in N}\subset \R^{M\times M}$ such that
\[
B_n = U_n^T D_n^{1/2} D_n^{1/2} U_n.
\] 
Of course, $U_n$ and $D_n$ depend on $s$.
We define a sequence of random vectors $(W_{n,s})_{n\in \N}$ by 
setting $W_{n,s} := U_n D_n^{1/2}(Z_n-\mu_{\theta}^{(s)})$,
such that 
\begin{align*}
&(Z_n-\mu_{\theta}^{(s)})^T\left((\Sigma^{(s)}_\theta)^{-2}-((\Sigma^{(s)}_\theta)^2+\beta_n^2 I)^{-1}\right)(Z_n-\mu_{\theta}^{(s)})  \\
&=(Z_n-\mu_{\theta}^{(s)})^T B_n(Z_n-\mu_{\theta}^{(s)})  
 =  W_{n,s}^T  W_{n,s}.
\end{align*}
The random variable $Z_i$ conditioned on $X_i=x$ is normally distributed 
with mean $\mu_{\theta}^{(x)}$
and covariance matrix $(\Sigma_{\theta}^{(x)})^2_n$.
Hence $W_{i,s}$, conditioned on $X_i=x$, satisfies
\[
W_{i,s}\sim \normal(\tilde{\mu}_i ,A_i),
\]
with 
\[
\tilde{\mu}_i = U_i^T D_i^{1/2}(\mu_{\theta}^{(x)}-\mu_{\theta}^{(s)})
\]
and
\[ 
A_i = U_i^T D_i^{1/2}(\Sigma_{\theta}^{(x)})^2_i (U_i^T D_i^{1/2})^T.
\]
Since $A_i$ is symmetric and positive definite, we find sequences 
of orthogonal matrices $(U'_n)_{n\in\N}$ and diagonal 
matrices $(D_n')_{n\in\N}$ depending on $x$ and $s$ such that
\[
A_i = U_i'D_i'^{1/2}D_i'^{1/2} U_i'^T.
\]
 Let $(N_i)_{i\in\N}$ be an i.i.d. sequence of random vectors with $N_i\sim \mathcal{N}(0,I)$
 and denote $N_i=(N_i^{(1)},\dots,N_i^{(M)})$. 
 Then
\begin{align*}
W_{i,s}^TW_{i,s}  &= \abs{W_{i,s}}^2
\stackrel{\mathcal{D}}{=} \abs{U_i'D_i'^{1/2}(N_i + D_i'^{-1/2} U_i'^T\tilde{\mu}_i )}^2\\
&= \abs{D_i'^{1/2}(N_i + D_i'^{-1/2} U_i'^T\tilde{\mu}_i)}^2
= \sum_{j=1}^M D'_i(j,j) \left(N_i^{(j)} + ( D_i'^{-1/2} U_i'^T\tilde{\mu}_i)^{(j)} \right)^2.
\end{align*}
Recall 
that for a chi-squared distribution 
with one degree of freedom and non-centrality 
parameter $\gamma>0$ the moment generating function in $t$, with $t<1/2$, is given by 
$
\frac{\exp(\gamma t/(1-2t))}{(1-2t)^{1/2}}$.
Hence, for any $t<\min_{j=1,\dots,M} D'_i(j,j)^{-1}$ 
with
non-centrality 
parameter $( D_i'^{-1/2} U_i'^T\tilde{\mu}_i)^{(j)}$ at $\frac{t}{2} D_i'(j,j)$ it  is 
well-defined
and 
we obtain
\begin{align*}
&\E_{\theta^*}^{\pi}\left[ \exp\left(\frac{t}{2} W_{i,s'}^TW_{i,s'}\right)\mid X_i = s\right]\\
&= \prod\limits_{j=1}^M (1-2(\frac{t}{2})D'_i(j,j))^{-1/2}
 \exp\left(\frac{( D_i'^{-1/2} U_i'^T\tilde{\mu}_i)^{(j)}(\frac{t}{2})D'_i(j,j) }
{1-2 (\frac{t}{2})D'_i(j,j) }\right) \\
&= \prod\limits_{j=1}^M (1-tD'_i(j,j))^{-1/2}
 \exp\left(\frac{( D_i'^{-1/2} U_i'^T\tilde{\mu}_i)^{(j)}(\frac{t}{2})D'_i(j,j) }
{1-tD'_i(j,j) }\right) \rightarrow 1
\end{align*}
as $i\rightarrow\infty$, since $\lim\limits_{i\rightarrow\infty} D_i'(j,j) = 0$ 
for all $j=1,\dots,M$.
We can choose $k$ sufficiently large, such that
$K<\min_{j=1,\dots,M} D'_i(j,j)^{-1}$ for all $i\geq k$. We find that
\begin{align*}
\E_{\theta^*}^{\pi}& \left[ \max\limits_{s'\in S}  \exp\left( \frac{1}{2} W_{k,s'}^TW_{k,s'}\right)\mid X_k = s\right]
 \leq \E_{\theta^*}^{\pi}\left[  \sum\limits_{s'\in S}\exp\left(\frac{1}{2} W_{k,s'}^TW_{k,s'}\right)\mid X_k=s\right] \\
& \leq \prod  \limits_{s'\in S}   \left(\E_{\theta^*}^{\pi} \left[ \exp\left(\frac{K}{2} W_{k,s'}^TW_{k,s'}\right)\mid X_k=s\right]\right)^{1/K} ,
\end{align*}
where we used the generalized H\"older inequality in the last estimate. 
Then, by
taking the limit superior we obtain that 
the right-hand side of the previous inequality goes to one 
for $k\to \infty$ such that \ref{en: C2} holds. 
Condition \ref{en: C3} can be verified similarly.

The application of Theorem~\ref{thm: main_thm} and Corollary~\ref{lemma: main lemma}
leads to the following result.

\begin{coro}
 For any initial distribution $\nu\in\mathcal{P}(S)$ which is strictly positive if 
 and only if $\pi$ is strictly positive, we have
 for the multivariate Gaussian DHMM satisfying \eqref{beta} for some $q>0$ that
\[
\theta^{\,\rm QML}_{\nu,n}\rightarrow \theta^*, \quad \P_{\theta^*}^{\pi}\text{-a.s.}
\]
and 
\[
\theta^{\,\rm ML}_{\nu,n}\rightarrow \theta^*, \quad \P_{\theta^*}^{\pi}\text{-a.s.}
\]
as $n\rightarrow\infty$.
\end{coro}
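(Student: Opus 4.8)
The plan is to verify that the multivariate linear Gaussian DHMM satisfies all structural hypotheses of Theorem~\ref{thm: main_thm} together with condition~\ref{en: C3}, and then to apply Theorem~\ref{thm: main_thm} and Corollary~\ref{lemma: main lemma} directly. Conditions \ref{en: P1} and \ref{en: P2} are immediate from the model assumptions, namely irreducibility of $P_{\theta^*}$ and continuity of $\theta\mapsto(P_\theta,\mu_\theta,\Sigma_\theta)$. For the well-behaving HMM conditions \ref{en: H1}--\ref{en: H4} I would observe that, up to additive constants depending continuously on $\theta$, both $\log f_\theta(s,y)$ and $\log f_{\theta,n}(s,z)$ are quadratic forms in $y-\mu_\theta^{(s)}$ and $z-\mu_\theta^{(s)}$ respectively. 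Since $Y_1$ and each $Z_n$ are finite Gaussian mixtures under $\P_{\theta^*}^\pi$ and hence have finite second moments, the expected absolute log-density is finite, which gives \ref{en: H1} and \ref{en: H4}; the positive-part integrability \ref{en: H2} follows from the same bound on a small neighborhood, and continuity \ref{en: H3} from continuity of $A\mapsto A^{-1}$ on full-rank matrices.

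For the proximity conditions, \ref{en: C1} reduces to a moment estimate: writing $\varepsilon_n=\beta_n N$ with $N\sim\normal(0,I)$ and fixing $r>0$ with $r/q>1$, Markov's inequality yields $\P_{\theta^*}^\pi(|\varepsilon_n|>\delta)\le\E_{\theta^*}^\pi[|N|^r]\beta_n^r/\delta^r=\mathcal{O}(n^{-p})$ with $p=r/q>1$, using \eqref{beta}. The genuinely delicate part, and the step I expect to be the main obstacle, is the pair of exponential-moment conditions \ref{en: C2} and \ref{en: C3}. The ratio $f_{\theta^*,n}(s,Z_n)/f_{\theta^*}(s,Z_n)$ equals a normalizing factor $C_n\to 1$ times $\exp(\tfrac12(Z_n-\mu_{\theta^*}^{(s)})^T B_n(Z_n-\mu_{\theta^*}^{(s)}))$, where $B_n=(\Sigma_{\theta^*}^{(s)})^{-2}-((\Sigma_{\theta^*}^{(s)})^2+\beta_n^2 I)^{-1}$ is symmetric positive definite with all entries tending to $0$. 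I would diagonalize $B_n$, set $W_{n,s}$ to be the corresponding linear image of $Z_n-\mu_{\theta^*}^{(s)}$, and identify $W_{n,s}^T W_{n,s}$ conditionally as a weighted sum of non-central chi-squared variables with one degree of freedom whose weights are the eigenvalues of $B_n$.

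The crux is then to control $\E_{\theta^*}^\pi[\exp(\tfrac{t}{2}W_{n,s'}^T W_{n,s'})\mid X_n=s]$ via the closed-form moment generating function of the non-central chi-squared law. Because the eigenvalues of $B_n$ vanish as $n\to\infty$, for any fixed $t$ the generating function is well-defined for all large $n$ and converges to $1$. To pass from a single state $s'$ to $\max_{s'\in S}$, I would bound the maximum by the sum and apply the generalized H\"older inequality; this forces the exponent $t=K$, so that each factor appears with power $1/K$, and choosing $k$ large enough that $K<\min_j D'_i(j,j)^{-1}$ for all $i\ge k$ keeps every generating function finite. Taking $\limsup_{n\to\infty}$ then delivers the bound $1$ demanded by \eqref{al: expect}, while \eqref{eq: finite} holds for arbitrary $k$ by strict positivity and finiteness of the Gaussian densities. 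Condition \ref{en: C3} is obtained by the same diagonalization after replacing the integrand with its supremum over a small neighborhood $\mathcal{E}_\theta$, where continuity in $\theta$ preserves the vanishing-eigenvalue limit. With every hypothesis verified, Theorem~\ref{thm: main_thm} gives $\theta^{\,\rm QML}_{\nu,n}\to\theta^*$ and Corollary~\ref{lemma: main lemma} gives $\theta^{\,\rm ML}_{\nu,n}\to\theta^*$, both $\P_{\theta^*}^\pi$-a.s.
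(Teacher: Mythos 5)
Your proposal is correct and follows essentially the same route as the paper: verifying \ref{en: P1}--\ref{en: P2} and \ref{en: H1}--\ref{en: H4} from moment bounds on the Gaussian mixtures, establishing \ref{en: C1} via Markov's inequality with exponent $r>q$, and handling \ref{en: C2}--\ref{en: C3} by diagonalizing $B_n$, invoking the non-central chi-squared moment generating function, and passing from a single state to the maximum via the sum and the generalized H\"older inequality before applying Theorem~\ref{thm: main_thm} and Corollary~\ref{lemma: main lemma}. The only minor imprecision is that the weights in the chi-squared representation are the eigenvalues of the conditional covariance $A_i$ of $W_{i,s}$ (which combine the eigenvalues of $B_n$ with the covariance of $Z_n$), not of $B_n$ itself, but since these still vanish as $n\to\infty$ the argument is unaffected.
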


\begin{remark}
 For $K=2$ and $M=1$ we have the model of the conductance level of ion channel data with varying voltage 
 provided in the introduction, see Figure \ref{example ramp} and \eqref{eq: Y_ion} and \eqref{eq: Z_ion}. 
 The previous corollary states the desired
 consistency of the considered MLEs in that setting. 
 A data analysis of the ion channel recordings of the underlying DHMM will be done
 in a separate paper.
\end{remark}

\section{Discussion and limitations}
\label{sec: disc}
\noindent
In this section we discuss four aspects. First, having the models from Section~\ref{sec: appl} in mind,
one might consider a hybrid case, that is, e.g. if the non-observed sequence $Y$ is
Poisson distributed and the inhomogeneous noise is normally distributed. We discuss where our approach fails here and
provide a strategy how to resolve this issue. Second, one might ask whether the proximity assumptions formulated in 
Section~\ref{sec: struct_cond} can be relaxed. We provide a simple example where \ref{en: C1} is not satisfied
and $\theta_{\nu,n}^{\rm QML}$ is not consistent anymore.
Third, we discuss the restriction of considering only hidden Markov chains on finite state spaces. Finally, we comment and discuss conditions which lead to asymptotic normality of the QMLE.

\subsection{Hybrid model}
\noindent

The hidden sequences $X$ and $Y$ of the DHMM are defined as in Section~\ref{Poisson models with Poisson errors}.
The observed sequence $Z=(Z_n)_{n\in\N}$ is given by
\[
Z_n = Y_n + \varepsilon_n,
\]
where $(\varepsilon_n)_{n\in\N}$ is an independent sequence of random variables with 
$\epsilon_n \sim \mathcal{N}(0,\beta^2_n)$ and 
a $(\beta_n)_{n\in\N}\subset (0,\infty)$ satisfies $\lim_{n\to \infty} \beta_n^2 =0$. 
In other words, on the Poisson random variable $Y_n$ we add Gaussian time-dependent noise.

The main issue is that the observed sequence 
$Z$ takes values in $\R$ whereas $Y$ takes values in $\N\cup \{0\}$. 
Consider $G=\mathbb{R}$ equipped with the reference measure
$\lambda(\cdot) = \mathcal{L}(\cdot) + \sum_{i=0}^{\infty} \delta_i(\cdot)$.
Here $\mathcal{L}(\cdot)$ denotes the Lebesgue measure and $\delta_{i}(\cdot)$  
the Dirac-measure at point $i\in\N$.
The conditional density $f_{\theta,n}$ w.r.t. $\lambda$ is given by
\begin{align*}
f_{\theta,n}(s,z) = 
\begin{cases}\sum_{j=0}^{\infty} \frac{\lambda_{\theta}^{(s)}}{j!} \exp(-\lambda_{\theta}^{(s)}) 
\frac{1}{(2\pi\beta_n^2)^{1/2}} \exp\left(-\frac{(z-j)^2}{2\beta_n^2}\right)&\quad z\in \R\backslash \N\\
0&\quad z\in \N.
\end{cases}
\end{align*}
One can verify that \ref{en: C2} is not satisfied in this scenario. 
In general, assumption \ref{en: C2} is difficult to handle, whenever the support
of $f_\theta$ is strictly ``smaller'' than the support of $f_{\theta,n}$. 
We mention a possible strategy to resolve this problem:
\begin{enumerate}
 \item Transform the observed sequence to a sequence 
 $\tilde{Z}=(\widetilde{Z}_n)_{n\in\mathbb{N}}$, such that the support of the
 corresponding conditional density coincides with the support of $f_\theta$.
 For example, this might be done by rounding to the nearest natural number, that is, $\widetilde{Z}_n = \lfloor Z_n+0.5 \rfloor$.
 \item Prove that the QMLE
 $\tilde{\theta}_{\nu,n}^{\rm QML}$, based on $\tilde Z$, is consistent. (For example, by applying Theorem~\ref{thm: main_thm}.)
 \item Prove that $\theta_{\nu,n}^{\rm QML}-\tilde{\theta}_{\nu,n}^{\rm QML}\to 0,$ $\mathbb{P}_{\theta^*}^\pi$ a.s. as 
 $n\to \infty$.
\end{enumerate}
A similar strategy might be used to obtain consistency for the MLE.
 
\subsection{Proximity assumption}
\noindent
We show that in general one cannot weaken the proximity assumption from Section~\ref{sec: struct_cond}.

We provide an example, which does not satisfy \ref{en: C1} and show 
that $\theta^{\rm QML}_{\nu,n}$ 
is not strongly consistent for the approximation of $\theta^*$.
\begin{example} 
Consider the linear Gaussian model of Section~\ref{linear Gaussian models}
in the case $m=1$ and $K=1$ with $\theta^* =(0,1)$.
The parameter $\theta^*$ determines the mean $\mu_{\theta^*} = 0$ and the variance $\sigma_{\theta^*}^2=1$.
Let $\beta = \lim\limits_{n\to \infty} \beta_n  >0$ and $\theta_0 = (0,1+\beta^2)$.
Note that
\[
 \epsilon_n \overset{\mathcal{D}}{\to} N,
\]
as $n\to\infty$, where $N\sim \normal(0,\beta^2)$. This contradicts the conclusion of  
Lemma \ref{Lemma a.s. convergence} below and therefore 
assumption \ref{en: C1} is not satisfied. Further we have
\[
\lim\limits_{n\to \infty}\E_{\theta^*}^{\pi}\left[\log f_{\theta}(1,Z_n) \right] 
= - \frac{1}{2} \log(2\pi \sigma^2) - \frac{1+\mu^2+\beta^2}{2\sigma^2},
\]
which implies that 
\[
\lim\limits_{n\to \infty}\E_{\theta^*}^{\pi}\left[\log f_{\theta_0}(1,Z_n) \right] 
> \lim\limits_{n\to \infty} \E_{\theta^*}^{\pi}\left[\log f_{\theta^*}(1,Z_n) \right]. 
\]
For any $\theta\in\Theta$ we have that
\begin{align*}
n^{-1} \log q_{\theta}(Z_1,\ldots,Z_{n}) 
= n^{-1} \sum\limits_{i=1}^n \log f_{\theta}(1,Z_i) 
\to\lim\limits_{n\to \infty}\E_{\theta^*}^{\pi}\left[\log f_{\theta}(1,Z_n) \right].
\end{align*}
In fact, for any closed set $C\subset \Theta$ with $\theta_0\notin C$ we have that
\[
\lim\limits_{n\to\infty} n^{-1} \log q_{\theta_0}^{\nu}(Z_1,\ldots,Z_{n}) 
> \limsup\limits_{\theta\in C} \log q_{\theta}^{\nu}(Z_1,\ldots,Z_{n})
\]
and therefore $\theta^{\rm QML}_n\to \theta_0$ a.s., see Lemma \ref{Wald lemma} 
and Theorem~\ref{Wald theorem}.
\end{example}

\subsection{Finite state space of the hidden Markov chain}
\noindent
A generalization of the consistency results of maximum likelihood 
estimation to scenarios with general state space of the 
hidden Markov chain might be of interest. 
There are mainly two reasons why we assume that $\mathcal{S}$ is finite:
\begin{enumerate}
 \item Our main motivation comes from the DHMM which models the conductance 
 levels of ion channel data with finite $\mathcal{S}$.
 \item The requirements one needs to impose get more technical. In particular,
 our conditions on the ``irreducibility and continuity of $X$'' as well as the ``well behaving HMM'' from Section~\ref{sec: struct_cond}
 become more difficult on general state spaces. It seems that the assumptions 
(A1)-(A6) of \cite{douc_consistency_2011} are sufficient, but then in the proof of Theorem~\ref{thm: main_thm} we cannot argue 
with Lemma~\ref{Lemma 13} anymore. This lemma can also be adapted to the 
more general scenario as in \cite[Lemma~13]{douc_consistency_2011}, but then involves an 
additional term.
\end{enumerate}

\subsection{Asymptotic normality of the QMLE}
\label{QMLE_asymp_nornal}
\noindent
Under additional conditions
one can obtain asymptotic normality of the MLE by applying the work of \cite{jensen_asymptotic_2011}.
The requirements to obtain this result for the QMLE are similar. Namely, 
let $\theta_{\nu,n}^{\rm QML}$ be strongly consistent, which is guaranteed under the 
assumptions of Theorem~\ref{thm: main_thm}, and assume that
\begin{itemize}
 \item the mixing condition \ref{en: M},
 \item the CLT guaranteeing condition \ref{en: CLT},
 \item as well as the uniform convergence condition \ref{en: UC},
\end{itemize}
formulated in Appendix~\ref{Ass_asymp_norm}, do hold.
Then, one can prove
\[
  \sqrt{n} \, G_n^{-1/2}\,F_n (\theta_{\nu,n}^{\rm QML} - \theta^*) \overset{\mathcal{D}}{\to}  N,
\]
where $N\sim \mathcal{N}(0,I)$, with the identity matrix $I\in \mathbb{R}^{d\times d}$,
\begin{align*}
 G_n & := \frac{1}{n} {\rm Cov}_{\theta^*}^{\pi}(S_n(\theta^*)),\\
 F_n & := -\frac{1}{n}\,\E_{\theta^*}^{\pi}\left[\left(
\frac{\partial}{\partial \theta'}S_n(\theta')\bigr\rvert_{\theta' = \theta^*}\right)^T\right],\\
S_n(\theta) & := \frac{\partial}{\partial \theta'} \log q_{\theta'}^\nu(Z_1,\dots,Z_n)\bigr\rvert_{\theta' = \theta},
\end{align*}  
and the covariance matrix of $S_n(\theta^*)$ denoted by
 ${\rm Cov}_{\theta^*}^{\pi}(S_n(\theta^*))$.
The proof of this fact is technical and follows the approach of \cite{jensen_asymptotic_2011}
by applying additional non-trivial arguments. 
The main issue of the result is the condition
\begin{equation}  
\label{eq: CLT_limit_cond}
\lim\limits_{n\rightarrow\infty}\frac{1}{\sqrt{n}}\left\vert
\E_{\theta^*}^{\pi}\left(S_n(\theta^*)\right)\right\vert_1= 0,
\end{equation}
formulated in \ref{en: CLT} in Appendix~\ref{Ass_asymp_norm} with $\vert\cdot\vert_1$ being the $\ell_1$-norm. 
It guarantees that the limiting distribution of 
$S_n$ has mean zero, which is automatically satisfied for the corresponding
quantity of the MLE. Hence, the assumptions of \cite{jensen_asymptotic_2011} for asymptotic normality of the MLE simplify 
to \ref{en: M}, \ref{en: CLT}, \ref{en: UC} of Appendix~\ref{Ass_asymp_norm}, 
where $q^\nu_\theta$ and $Z_1,\dots,Z_n$
has to be replaced by $p^\nu_\theta$ and $Y_1,\dots,Y_n$, respectively, 
without the requirement to check \eqref{eq: CLT_limit_cond}.
However, for the QMLE the crucial problem is that we are unfortunately not able to verify
\eqref{eq: CLT_limit_cond}
in the applications presented above.

 \section{Proofs and auxiliary results}
\label{sec: main_thm_proof}
\noindent
We prove some results that specify the proximity of $Y$ and $Z$.
\begin{lemma}\label{Lemma a.s. convergence}
Under the assumption formulated in \ref{en: C1}, we have
\begin{equation}
\P_{\theta}^{\nu}\left(\lim\limits_{n\rightarrow\infty}m(Z_n,Y_n)=0\right) = 1.
\end{equation}
for any $\theta\in \Theta$ and $\nu\in\mathcal{P}(S)$.
\end{lemma}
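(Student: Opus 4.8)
The plan is to upgrade the polynomial tail bound of \ref{en: C1} to almost sure convergence by means of the first Borel--Cantelli lemma. It suffices to fix an arbitrary $\varepsilon>0$ and to show that the event $m(Z_n,Y_n)\ge\varepsilon$ occurs only finitely often $\P_\theta^\nu$-almost surely; intersecting the resulting full-measure events over the countable family $\varepsilon\in\{1/k:k\in\N\}$ then forces $m(Z_n,Y_n)\to 0$ on a set of full measure, since $m(Z_n,Y_n)\ge 1/k$ finitely often for every $k$ means $\limsup_n m(Z_n,Y_n)\le 1/k$ for every $k$. Thus the whole argument reduces to establishing summability of $\sum_{n}\P_\theta^\nu\bigl(m(Z_n,Y_n)\ge\varepsilon\bigr)$.

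To obtain summability I would first pass from the conditional bound in \ref{en: C1} to an unconditional one by averaging over the hidden state. Conditioning on $X_n$ and using the law of total probability gives
\[
\P_{\theta}^{\nu}\left(m(Z_n,Y_n)\ge\varepsilon\right)
=\sum_{s\in S}\P_{\theta}^{\nu}\left(m(Z_n,Y_n)\ge\varepsilon\mid X_n=s\right)\P_{\theta}^{\nu}(X_n=s).
\]
Bounding each $\P_\theta^\nu(X_n=s)\le 1$ and invoking that $S$ is finite with $\lvert S\rvert = K$, the right-hand side is at most $K$ times the maximal conditional probability, which by \ref{en: C1} is $\mathcal{O}(n^{-p})$ for some $p>1$. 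Hence $\P_\theta^\nu(m(Z_n,Y_n)\ge\varepsilon)=\mathcal{O}(n^{-p})$, and since $p>1$ the series $\sum_n n^{-p}$ converges. Borel--Cantelli then yields $\P_\theta^\nu(m(Z_n,Y_n)\ge\varepsilon\text{ infinitely often})=0$. Assembling the pieces, the exceptional set $\{m(Z_n,Y_n)\not\to 0\}$ equals $\bigcup_{k\in\N}\{m(Z_n,Y_n)\ge 1/k\text{ i.o.}\}$, a countable union of $\P_\theta^\nu$-null sets, and is therefore itself null, which proves the claim for the given $\theta$ and $\nu$.

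The Borel--Cantelli mechanics are routine; the one point demanding care is that \ref{en: C1} is formulated at the true parameter $\theta^*$, whereas the statement asserts convergence under every $\P_\theta^\nu$. The natural justification is that in a DHMM the coupling between $Z_n$ and $Y_n$ is governed by additive, parameter-independent noise (for instance $Z_n=Y_n+\varepsilon_n$ in the Poisson and Gaussian models), so the conditional law of $m(Z_n,Y_n)$ given $X_n=s$---and hence the $\mathcal{O}(n^{-p})$ tail estimate---transfers from $\theta^*$ to an arbitrary $\theta$. Confirming that this tail bound does hold for the relevant $\theta$ is the only genuinely model-dependent ingredient, and it is where I would concentrate the verification rather than on the summation itself.
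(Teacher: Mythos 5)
Your proof is correct and follows essentially the same route as the paper: decompose via the law of total probability over the hidden state, use the finiteness of $S$ to convert the conditional $\mathcal{O}(n^{-p})$ bound of \ref{en: C1} into an unconditional summable bound (the paper estimates $\sum_{s}\P_{\theta}^{\nu}(X_n=s)\,\P_{\theta}(\cdot\mid X_n=s)\le\max_{s}\P_{\theta}(\cdot\mid X_n=s)$ instead of picking up the harmless factor $K$), and then apply Borel--Cantelli together with the countable intersection over $\varepsilon=1/k$. Your closing remark identifies the one genuine subtlety: \ref{en: C1} is formulated only at $\theta^*$, while the lemma is stated for every $\theta\in\Theta$, and the paper's own proof silently applies the tail bound at an arbitrary $\theta$ -- so strictly speaking the hypothesis must be read as holding for all $\theta$ (as it does in the applications, where the conditional law of $m(Z_n,Y_n)$ given $X_n=s$ does not depend on $\theta$), and only the case $\theta=\theta^*$, $\nu=\pi$ is actually used later.
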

\begin{proof}
By \ref{en: C1} we obtain for any $\varepsilon>0$ that
\begin{align*}
\sum\limits_{n=1}^{\infty} \P_{\theta}^{\nu}\left(m(Z_n,Y_n)\geq \varepsilon \right)
&= \sum\limits_{n=1}^{\infty}  \sum\limits_{k=1}^{K} 
\P_{\theta}^{\nu}\left(m(Z_n,Y_n)\geq \varepsilon,X_n=k \right)\\
&= \sum\limits_{n=1}^{\infty}  \sum\limits_{k=1}^{K} 
\P_{\theta}^{\nu}\left( X_n=k \right) 
\P_{\theta}\left(m(Z_n,Y_n)\geq \varepsilon\mid X_n=k \right)\\
&\leq \sum\limits_{n=1}^{\infty} \max\limits_{k\in S} 
\P_{\theta}\left(m(Z_n,Y_n)\geq \varepsilon\mid X_n=k \right) <\infty.
\end{align*}
By the Borel-Cantelli lemma we obtain the desired almost sure convergence 
of $m(Z_n,Y_n)$ to zero.
\end{proof}

In \cite{douc_consistency_2011} the consistency of the maximum likelihood estimation 
for homogeneous HMMs under weak conditions is verified. We use the following result
of them, which verifies that the relative entropy rate exists.

\begin{thm}[{\cite[Theorem~9]{douc_consistency_2011}}]\label{Douc entropy}
Assume that the conditions \ref{en: P1} and \ref{en: H1} are satisfied. 
Then, there exists an $\ell(\theta^*)\in\R$, such that
\begin{equation}  \label{eq: ell}
\ell(\theta^*) =  \lim\limits_{n\rightarrow\infty} \E^{\pi}_{\theta^*}\left[n^{-1}\log 
q^{\pi}_{\theta^*}(Y_1,\ldots,Y_n)\right]
\end{equation}
and 
\begin{equation}
\ell(\theta^*) = \lim\limits_{n\rightarrow\infty} n^{-1} \log q^{\nu}_{\theta^*}(Y_1,\ldots,Y_n),\quad \P^{\pi}_{\theta^*}\text{-a.s.}
\end{equation}
for any probability measure $\nu\in\mathcal{P}(S)$ 
which is strictly positive if and only if $\pi$ is strictly positive.
\end{thm}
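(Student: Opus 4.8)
The statement is cited from \cite{douc_consistency_2011}, so my task is to reconstruct the argument that establishes existence of the relative entropy rate $\ell(\theta^*)$ together with its almost sure realization. The plan is to proceed in two stages: first establish the convergence of the expectations in \eqref{eq: ell} via a subadditivity argument, and then upgrade to the pathwise almost sure statement using an ergodic theorem.

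First I would show that the sequence $a_n := \E^{\pi}_{\theta^*}[\log q^{\pi}_{\theta^*}(Y_1,\ldots,Y_n)]$ is subadditive, i.e.\ $a_{n+m} \leq a_n + a_m$ (up to a sign convention). The key tool here is exponential separability from \cite{douc_consistency_2011}, which controls how the conditional law of the hidden chain given the observations forgets its initialization. Concretely, one writes the joint likelihood $q^{\pi}_{\theta^*}(Y_1,\ldots,Y_{n+m})$ as a product of one-step predictive densities and uses the irreducibility assumption \ref{en: P1} to obtain a uniform mixing/contraction bound on the prediction filter; this gives the near-additivity of the log-likelihood increments. Fekete's subadditive lemma then yields that $n^{-1} a_n$ converges to a finite limit $\ell(\theta^*) \in \R$, provided the increments are integrable. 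Integrability in turn comes from \ref{en: H1}, the moment condition $\E^{\pi}_{\theta^*}[|\log f_{\theta^*}(s,Y_1)|] < \infty$, which bounds the single-step contribution and hence controls $a_1$ and the increments from below.

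Next I would pass to the almost sure statement. Here the natural approach is a subadditive ergodic theorem (Kingman). Under \ref{en: P1} the stationary hidden chain started from its invariant distribution $\pi$ is ergodic, and the pair $(X_n,Y_n)_{n}$ inherits ergodicity; the subadditive cocycle built from the log-likelihoods then converges $\P^{\pi}_{\theta^*}$-a.s.\ to the deterministic constant $\ell(\theta^*)$, matching the limit of the expectations. The extension from the initial distribution $\pi$ to an arbitrary $\nu$ that is strictly positive exactly when $\pi$ is again relies on exponential separability: the difference $n^{-1}[\log q^{\nu}_{\theta^*} - \log q^{\pi}_{\theta^*}]$ is shown to vanish almost surely because the filters started from $\nu$ and $\pi$ couple geometrically fast, so the two log-likelihoods differ only by an $o(n)$ term. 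The strict-positivity matching ensures $\nu$ is absolutely continuous with respect to $\pi$ in the relevant sense, so no mass is lost and the forgetting bound applies uniformly.

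I expect the \textbf{main obstacle} to be establishing the uniform contraction of the prediction filter that underlies both the subadditivity and the filter-forgetting step. This is precisely the content of exponential separability in \cite{douc_consistency_2011}, and it is where irreducibility \ref{en: P1} is used in an essential, quantitative way rather than merely qualitatively. Since the result is quoted verbatim from their Theorem~9, the cleanest route is to invoke their machinery directly: verify that the finite state space setting with \ref{en: P1} and \ref{en: H1} meets their hypotheses, and then cite the convergence conclusions. Grinding through the full subadditivity estimate would reproduce their proof, so in the interest of readability I would keep the argument at the level of checking hypotheses and referencing the cited theorem.
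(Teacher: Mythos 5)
The paper does not prove this statement at all: it is imported verbatim as \cite[Theorem~9]{douc_consistency_2011}, and the only remark made about its proof is that it rests on the generalized Shannon--McMillan--Breiman theorem of Barron \cite{barron_strong_1985} applied to the stationary process $Y$ (note that at the true parameter $q^{\pi}_{\theta^*}(Y_1,\ldots,Y_n)$ \emph{is} the density of $(Y_1,\ldots,Y_n)$, so the SMB theorem gives \eqref{eq: ell} and the a.s.\ limit directly). Your reconstruction instead follows Leroux's original route \cite{leroux_maximum-likelihood_1992} -- Fekete for the expectations, Kingman's subadditive ergodic theorem for the pathwise statement -- which the paper's literature review explicitly contrasts with the Douc et al.\ approach. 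That architecture can be made to work, so the overall plan is viable, but it is a genuinely different proof from the one being cited.

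There is, however, a concrete flaw in the mechanism you propose. You attribute both the subadditivity and the transfer from $\pi$ to $\nu$ to ``exponential separability'' and a ``uniform mixing/contraction bound on the prediction filter'' derived from \ref{en: P1}. Irreducibility alone does not yield uniform filter forgetting -- that would require something like the uniform positivity condition \ref{en: M} in Appendix~\ref{Ass_asymp_norm}, which is \emph{not} assumed here -- and exponential separability in \cite{douc_consistency_2011} serves a different purpose (controlling $\theta\neq\theta^*$ for identifiability), not the existence of $\ell(\theta^*)$. The steps you need are in fact elementary and require no mixing at all: subadditivity follows from the pointwise bound $q^{\nu}_{\theta}(y_1,\ldots,y_n)\leq q^{\nu}_{\theta}(y_1,\ldots,y_{m-1})\,q^{\delta}_{\theta}(y_m,\ldots,y_n)$ (transition probabilities are at most one; this is exactly \eqref{leq 1}--\eqref{leq 2} in the paper), with integrability of the increments supplied by \ref{en: H1}; and the passage from $\pi$ to $\nu$ uses only the two-sided bound $\min_{s}\pi(s)/\nu(s)\leq q^{\pi}_{\theta^*}/q^{\nu}_{\theta^*}\leq\max_{s}\pi(s)/\nu(s)$, valid on a finite state space precisely because $\nu$ and $\pi$ have matching supports -- compare \eqref{al: equ_qq} and \eqref{eq: qu_low_bnd} in the proof of Theorem~\ref{thm: q_Z_well_defined_ell}. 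As written, your plan hangs the two key steps on a contraction estimate that is unavailable under the stated hypotheses; replacing it with these elementary bounds (or simply invoking Barron's SMB theorem as Douc et al.\ do) closes the gap.
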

In the proof of the previous result one essentially uses the generalized 
Shannon-McMillan-Breiman theorem for stationary processes
proven by Barron et. al in \cite{barron_strong_1985}. 
Additionally, we also use a version of the generalized Shannon-McMillan-Breiman
theorem for asymptotic mean stationary processes, also proven in \cite{barron_strong_1985}. 
In the following we provide basic definitions to apply this result, for 
a detailed survey let us refer to \cite{gray_probability_2009}.
\begin{Def}
Let $(\Omega,\mathscr{F})$ be a measurable space equipped with a probability measure 
$\mathbb{Q}$ and let $T\colon \Omega \to \Omega$
be a measurable mapping. Then
\begin{itemize}
 \item $\mathbb{Q}$ is \emph{ergodic}, if for every $A\in \mathcal{I}$ either $\mathbb{Q}(A)=0$ or $\mathbb{Q}(A)=1$.
 Here $\mathcal{I}$ denotes the $\sigma$-algebra of the invariant sets, that are, the 
 sets $A\in \mathscr{F}$ satisfying
 $T^{-1}(A)=A$.
 \item $\mathbb{Q}$ is called 
 \emph{asymptotically mean stationary} (a.m.s.) if 
 there is a probability measure $\bar{\mathbb{Q}}$ on $(\Omega,\mathscr{F})$, such that
 for all $A\in \mathscr{F}$ we have
 \[
\frac{1}{n}\sum_{j=1}^n \mathbb{Q}\left(T^{-j} A\right)\underset{n\to \infty}{\longrightarrow}
\bar{\mathbb{Q}}\left(A\right).
\]
 We call $\bar{\mathbb{Q}}$ \emph{stationary mean} of $\mathbb{Q}$.
 \item a probability measure 
  $\widehat{\mathbb{Q}}$ on $(\Omega,\mathscr{F})$  
  \emph{asymptotically dominates} $\mathbb{Q}$ if for all $A\in\mathscr{F}$
with $\widehat{\mathbb{Q}}(A)=0$ holds 
\[
\lim\limits_{n\rightarrow\infty} \mathbb{Q}\left(T^{-n}A\right) = 0.
\]
\end{itemize}
\end{Def}
We need
 the following equivalence from \cite{rechard_invariant_1956}. The result follows also by
virtue of \cite[Theorem~2, Theorem~3 and the remark after the proof of Theorem~3]{gray_asymptotically_1980}.

\begin{lemma}\label{Rechard}
Let $(\Omega,\mathscr{F},\mathbb{Q})$ be a probability space 
and $T:\Omega\rightarrow\Omega$ be a measurable mapping.
Then, the following statements are equivalent:
\begin{enumerate}
\item[(i)] The probability measure $\mathbb{Q}$ is a.m.s. 
with stationary mean $\bar{\mathbb{Q}}$. 
\item[(ii)] 
There is a stationary probability measure $\widehat{\mathbb{Q}}$, which asymptotically dominates 
$\mathbb{Q}$.
\end{enumerate}
\end{lemma}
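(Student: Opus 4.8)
The plan is to establish the two implications (i)$\Rightarrow$(ii) and (ii)$\Rightarrow$(i) separately, treating the stationary mean $\bar{\mathbb{Q}}$ and the asymptotically dominating stationary measure $\widehat{\mathbb{Q}}$ as the objects that must be constructed in each direction. For the direction (i)$\Rightarrow$(ii) I would first observe that the stationary mean $\bar{\mathbb{Q}}$, defined as the Ces\`aro limit of $\frac{1}{n}\sum_{j=1}^{n}\mathbb{Q}(T^{-j}A)$, is itself a stationary probability measure: stationarity follows because $\bar{\mathbb{Q}}(T^{-1}A)$ and $\bar{\mathbb{Q}}(A)$ are Ces\`aro limits of sequences differing only by the boundary term $\frac{1}{n}\left(\mathbb{Q}(T^{-(n+1)}A)-\mathbb{Q}(T^{-1}A)\right)$, which tends to zero. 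The natural candidate for the dominating measure is therefore $\widehat{\mathbb{Q}}=\bar{\mathbb{Q}}$ itself. The remaining task is to verify the asymptotic domination property, i.e.\ that $\bar{\mathbb{Q}}(A)=0$ forces $\mathbb{Q}(T^{-n}A)\to 0$; this I would approach by showing that the a.m.s.\ condition gives $\bar{\mathbb{Q}}(A)=0$ on the whole orbit $\bigcup_{n}T^{-n}A$ in an averaged sense, and then transfer this to the individual terms.

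For the converse (ii)$\Rightarrow$(i), starting from a stationary $\widehat{\mathbb{Q}}$ that asymptotically dominates $\mathbb{Q}$, the goal is to prove that the Ces\`aro averages $\frac{1}{n}\sum_{j=1}^{n}\mathbb{Q}(T^{-j}A)$ actually converge for every $A\in\mathscr{F}$. Here I would exploit absolute continuity: asymptotic domination says that $\widehat{\mathbb{Q}}$-null sets are asymptotically $\mathbb{Q}$-null along the orbit, which should let me compare the averaged behaviour of $\mathbb{Q}$ against the stationary (hence constant-in-$j$) quantities $\widehat{\mathbb{Q}}(T^{-j}A)=\widehat{\mathbb{Q}}(A)$. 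The convergence of the averages would then be extracted either by a direct decomposition of $\mathbb{Q}$ relative to $\widehat{\mathbb{Q}}$ or by invoking the mean ergodic theorem for the stationary measure and controlling the discrepancy via the domination hypothesis.

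Rather than building this machinery from scratch, the cleanest route is to cite the established equivalence: the statement is exactly the content of \cite{rechard_invariant_1956}, and it also follows from \cite[Theorem~2, Theorem~3 and the remark after the proof of Theorem~3]{gray_asymptotically_1980}. I would therefore present the proof as an appeal to these references, since the equivalence between being a.m.s.\ and being asymptotically dominated by a stationary measure is a standard structural fact in ergodic theory that we use only as a black box. The main conceptual obstacle, were one to prove it directly, is the converse direction: turning the qualitative domination statement (null sets are asymptotically null) into the quantitative convergence of the Ces\`aro averages requires a careful uniform-integrability-type argument, and this is precisely the delicate step handled in the cited literature. Given that our subsequent use only needs the existence of the stationary mean together with the generalized Shannon--McMillan--Breiman theorem of \cite{barron_strong_1985}, deferring the full argument to \cite{rechard_invariant_1956,gray_asymptotically_1980} is both rigorous and economical.
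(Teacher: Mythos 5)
Your proposal ultimately defers to the same citations the paper itself relies on (\cite{rechard_invariant_1956} and \cite[Theorems~2, 3 and the remark after Theorem~3]{gray_asymptotically_1980}), and the paper likewise gives no self-contained proof of this equivalence, so your approach matches the paper's exactly. The preliminary sketch of the two implications is heuristic and incomplete (in particular the transfer from averaged to individual terms in (i)$\Rightarrow$(ii) is not actually carried out), but since you correctly identify the citation as the intended proof, this is not a gap.
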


In our inhomogeneous HMM situation 
$(\Omega,\mathscr{F})$ is the space $G^{\N}$ 
equipped with the product $\sigma$-field 
$\B = \bigotimes_{i\in \mathbb{N}} \B(G)$. The transformation $T \colon G^{\N} \to G^{\N}$
is the left time shift, that is, for $A\in \B$ and $i\in\N$ we have
\begin{equation} \label{eq: left_shift}
T^{-i}(A) = \left\{(z_1,z_2,\ldots)\in G^{\N} : (z_{1+i},z_{2+i},\ldots)\in A \right\}.
 \end{equation}
Finally $\mathbb{Q}=\P_{\theta^*}^{\pi,Z}$.
In this setting we have the following result:

\begin{thm}\label{Z AMS}
Let us assume that condition \ref{en: C1} is satisfied. 
Then $\P_{\theta^*}^{\pi,Z}$ is a.m.s. with stationary mean $\P_{\theta^*}^{\pi,Y}$. 

\end{thm}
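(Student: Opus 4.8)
The goal is to show that $\P_{\theta^*}^{\pi,Z}$ is a.m.s.\ with stationary mean $\P_{\theta^*}^{\pi,Y}$, so the natural route is to invoke Lemma~\ref{Rechard}: it suffices to exhibit a stationary probability measure that asymptotically dominates $\P_{\theta^*}^{\pi,Z}$ and to identify the resulting stationary mean. The plan is to take $\widehat{\mathbb{Q}} = \P_{\theta^*}^{\pi,Y}$ as the candidate dominating measure. This is stationary because $X_0\sim\pi$ is the invariant distribution of the irreducible chain $P_{\theta^*}$ and the emission law of $Y_n$ given $X_n=s$ is homogeneous (independent of $n$); hence the homogeneous HMM output process $Y$ is a stationary process and $\P_{\theta^*}^{\pi,Y}$ is shift-invariant on $(G^{\N},\B)$.

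\emph{First} I would verify asymptotic domination. By definition I must check that for every $A\in\B$ with $\P_{\theta^*}^{\pi,Y}(A)=0$ one has $\lim_{n\to\infty}\P_{\theta^*}^{\pi,Z}(T^{-n}A)=0$. Unwinding the left-shift \eqref{eq: left_shift}, the event $T^{-n}A$ depends only on the coordinates $(Z_{n+1},Z_{n+2},\ldots)$, and correspondingly $A$ under $\P_{\theta^*}^{\pi,Y}$ depends on $(Y_{n+1},Y_{n+2},\ldots)$. The bridge between the two processes is the proximity condition \ref{en: C1}, or rather its consequence Lemma~\ref{Lemma a.s. convergence}, which gives $m(Z_n,Y_n)\to 0$ almost surely. \emph{The key step} is to turn this a.s.\ convergence into a comparison of the laws of the shifted tails: since $Z_n$ and $Y_n$ are asymptotically close coordinate-by-coordinate and both processes are driven by the same hidden chain $X$, the law of the shifted block $(Z_{n+1},Z_{n+2},\dots)$ becomes indistinguishable from the stationary law of $(Y_1,Y_2,\dots)$ as $n\to\infty$. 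Concretely, I would control $\P_{\theta^*}^{\pi,Z}(T^{-n}A)$ by comparing it with $\P_{\theta^*}^{\pi,Y}(T^{-n}A)$: the stationarity of $Y$ gives $\P_{\theta^*}^{\pi,Y}(T^{-n}A)=\P_{\theta^*}^{\pi,Y}(A)=0$, and the discrepancy between the $Z$- and $Y$-probabilities of the same tail event must be shown to vanish.

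\emph{The main obstacle} I anticipate is precisely this last comparison: a null set for the $Y$-law need not be null for the $Z$-law at any finite $n$, so I cannot argue by absolute continuity directly. Instead I expect to approximate $A$ by events depending on finitely many coordinates, on which the a.s.\ coupling $m(Z_{n+j},Y_{n+j})\to 0$ lets me transfer small probability from the $Y$-process to the $Z$-process up to an error that tends to zero; a monotone-class or outer-regularity argument then extends this from cylinder sets to all of $\B$. \emph{Once} asymptotic domination is established, Lemma~\ref{Rechard} yields that $\P_{\theta^*}^{\pi,Z}$ is a.m.s.; to pin down the stationary mean I would then show $\frac1n\sum_{j=1}^n\P_{\theta^*}^{\pi,Z}(T^{-j}A)\to\P_{\theta^*}^{\pi,Y}(A)$ for all $A\in\B$, which follows because each summand $\P_{\theta^*}^{\pi,Z}(T^{-j}A)$ converges to $\P_{\theta^*}^{\pi,Y}(A)$ (again via the vanishing tail discrepancy and stationarity of $Y$), whence the Cesàro average converges to the same limit. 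This identifies $\P_{\theta^*}^{\pi,Y}$ as the stationary mean and completes the proof.
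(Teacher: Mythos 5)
Your overall strategy---verify condition (ii) of Lemma~\ref{Rechard} with $\widehat{\mathbb{Q}}=\P_{\theta^*}^{\pi,Y}$ and then identify the stationary mean---is genuinely different from the paper's, which does not invoke Lemma~\ref{Rechard} here at all: the paper verifies the Ces\`aro definition of a.m.s.\ directly on the $\pi$-system of open cylinder sets, by converting the $\P_{\theta^*}^{\pi}$-a.s.\ convergence $m(Z_n,Y_n)\to 0$ of Lemma~\ref{Lemma a.s. convergence} into convergence of $\E_{\theta^*}^{\pi}[h(Z_{i+j_1},\dots,Z_{i+j_k})]$ to $\E_{\theta^*}^{\pi}[h(Y_{j_1},\dots,Y_{j_k})]$ for bounded uniformly continuous $h$ (via stationarity of $Y$ and Fatou), i.e.\ weak convergence of the shifted finite-dimensional distributions, and then extends by a uniqueness/generating-class argument. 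Your route, however, contains a genuine gap rather than just a harder path. Asymptotic domination by $\P_{\theta^*}^{\pi,Y}$ is a setwise statement about \emph{all} $\P_{\theta^*}^{\pi,Y}$-null sets, and it is false under \ref{en: C1} alone. Concretely, in the Gaussian model of Section~\ref{linear Gaussian models} with $K=1$, $M=1$ and $\beta_n=n^{-q}$ for small $q>0$ (say $q=1/8$), condition \ref{en: C1} holds, yet by Kakutani's dichotomy the law $\mu_n$ of $(Z_{n+1},Z_{n+2},\dots)$ and the law $\nu$ of $(Y_1,Y_2,\dots)$ are mutually singular for every $n$, because the Hellinger affinities satisfy $1-\rho_k\asymp\beta_k^4$ and $\sum_k\beta_k^4=\infty$. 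Choosing $B_n$ with $\mu_n(B_n)=1$, $\nu(B_n)=0$ and setting $A=\bigcup_n B_n$ gives $\P_{\theta^*}^{\pi,Y}(A)=0$ while $\P_{\theta^*}^{\pi,Z}(T^{-n}A)=1$ for all $n$. So your candidate dominating measure does not asymptotically dominate, and the theorem must be proved without it.

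The same example defeats both of your proposed repairs. Approximating a null set $A$ by cylinder events is an approximation in $\P_{\theta^*}^{\pi,Y}$-measure only, which gives no control on $\P_{\theta^*}^{\pi,Z}(T^{-n}\cdot)$ of the discrepancy---that control is precisely the domination you are trying to establish---and even for open covers the portmanteau inequalities run the wrong way (weak convergence lower-bounds, rather than upper-bounds, the limit probabilities of open sets). Your closing claim that $\P_{\theta^*}^{\pi,Z}(T^{-j}A)\to\P_{\theta^*}^{\pi,Y}(A)$ for every $A\in\B$ is setwise convergence of the shifted laws, which the counterexample also refutes; moreover, if it were true you would not need Lemma~\ref{Rechard} at all, since Ces\`aro convergence for all $A$ \emph{is} the definition of a.m.s.\ with stationary mean $\P_{\theta^*}^{\pi,Y}$. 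The viable fix is the paper's: restrict attention to finite-dimensional distributions, prove weak (not setwise) convergence there using the a.s.\ coupling of $Z$ and $Y$, and extend only through the generating class of open cylinder sets.
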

\begin{proof}
An intersection-stable generating system of the $\sigma$-algebra $\mathcal{B}$ is the union
over any finite index set $J\subset \mathbb{N}$ of cylindrical set systems
\[
 \mathcal{Z}_J := 
 \left\{
 \rho_J^{-1}(A_1\times \dots \times A_{\abs{J}})\mid A_j\in \mathcal{B}(G)\; \text{open}
 \right\},
\]
where $\rho_J\colon G^{\mathbb{N}} \to G^{\abs{J}} $ is the canonical projection to $J$, that is,
$\rho_J((a_i)_{i\in\mathbb{N}}) = (a_j)_{j\in J}$. By the uniqueness theorem
of finite measures it is sufficient to prove for an
arbitrary finite index set $J\subset \mathbb{N}$ that for any $B\in \mathcal{Z}_J$
we have
\begin{equation}  \label{eq: suff_to_prove}
 \lim_{n\to \infty} 
 \frac{1}{n} \sum_{i=1}^n \mathbb{P}_{\theta^*}^{\pi,Z}(T^{-i}(B))
 = \mathbb{P}_{\theta^*}^{\pi,Y}(B).
\end{equation}

Fix a finite index set $J=\{j_1,\ldots, j_k\}\subset \mathbb{N}$ 
and note that 
$(G^{\abs{J}},m_J)$, 
with the metric
\[
 m_J(a,b) = \sum_{j=1}^{\abs{J}} m(a_j,b_j), \quad a=(a_1,\dots,a_{\abs{J}}),\;b=(b_1,\dots,b_{\abs{J}})\in G^{\abs{J}},
\]
is a metric space. Here it is worth to mention that
the $\sigma$-algebra $\bigotimes_{j\in J}\mathcal{B}(G)$ coincides with the $\sigma$-algebra
generated by the open sets w.r.t. $m_J$.
By Lemma~\ref{Lemma a.s. convergence} we obtain
\begin{equation} \label{eq: as_zero}
\P_{\theta^*}^{\pi}\left(\lim_{i\to \infty}m_J
\left((Y_{i+j_1},\ldots,Y_{i+j_k}),(Z_{i+j_1},\ldots,Z_{i+j_k})\right)=0\right)=1.
 \end{equation}

Let $h:G^{\abs{J}}\rightarrow \R$ be a bounded, uniformly continuous function, i.e., for any 
$\varepsilon>0$ there is $\delta>0$ such that 
for all $a,b\in G^{\abs{J}}$ with $m_J(a,b)< \delta$ we have $\abs{h(a)-h(b)}<\varepsilon$. 
Then, by the stationarity of $Y$, the boundedness of $h$ and Fatou's lemma, we have
\begin{align}
\notag
 0 & \leq \liminf_{i\to \infty}
 \mathbb{E}_{\theta^*}^{\pi} [\abs{h(Z_{i+j_1},\ldots,Z_{i+j_k}) - h(Y_{j_1},\ldots,Y_{j_k})}]\\
\notag  
  & \leq \limsup_{i\to \infty}
 \mathbb{E}_{\theta^*}^{\pi} [\abs{h(Z_{i+j_1},\ldots,Z_{i+j_k}) - h(Y_{i+j_1},\ldots,Y_{i+j_k})}]\\
  & \leq  \mathbb{E}_{\theta^*}^{\pi} 
  \left[ \limsup_{i\to \infty} \abs{h(Z_{i+j_1},\ldots,Z_{i+j_k}) - h(Y_{i+j_1},\ldots,Y_{i+j_k})}\right].
\label{al: last_line}
  \end{align}
By the uniform continuity of $h$ we obtain
\[
 \lim_{i\to \infty} \abs{h(z_{i+j_1},\ldots,z_{i+j_k}) - h(y_{i+j_1},\ldots,y_{i+j_k})} = 0
\]
for all sequences $((z_{i+j_1},\ldots,z_{i+j_k}))_{i\in\mathbb{N}}$, 
$((y_{i+j_1},\ldots,y_{i+j_k}))_{i\in \mathbb{N}}\subset G^{\abs{J}}$ which satisfy
\[
\lim_{i\to\infty}m_J((z_{i+j_1},\ldots,z_{i+j_k}),(y_{i+j_1},\ldots,y_{i+j_k}))=0.
\]
Then, by using \eqref{eq: as_zero} we obtain
\[
  \mathbb{E}_{\theta^*}^{\pi} 
  \left[ \limsup_{i\to \infty} \abs{h(Z_{i+j_1},\ldots,Z_{i+j_k}) - h(Y_{i+j_1},\ldots,Y_{i+j_k})}\right]
  \leq 0,
\]
such that (by \eqref{al: last_line}) we have
\begin{equation*}
\lim\limits_{n\rightarrow\infty} \frac{1}{n} \sum\limits_{i=1}^n \E_{\theta^*}^{\pi}\left[h(Z_{i+j_1},\ldots,Z_{i+j_k})\right]
= \E_{\theta^*}^{\pi}\left[h(Y_{j_1},\ldots,Y_{j_k})\right].
\end{equation*}
Finally, by \cite[Theorem~1.2]{billingsley_convergence_1999} 
we have
for any $A \in \bigotimes_{j\in J} \mathcal{B}(G)$, 
\begin{equation*}
\lim\limits_{n\rightarrow\infty} \frac{1}{n} \sum\limits_{i=1}^n 
\P_{\theta^*}^{\pi}\left((Z_{i+j_1},\ldots,Z_{i+j_k})\in A\right)
= \P_{\theta^*}^{\pi}\left((Y_{j_1},\ldots,Y_{j_k})\in A\right),
\end{equation*}
which implies \eqref{eq: suff_to_prove} for any $B\in \mathcal{Z}_J$.
\end{proof}

Apart of the fact that we need the previous 
result to apply \cite[Theorem~3]{barron_strong_1985} it has also the following 
two useful consequences.
\begin{coro}  \label{cor: erg_thm_ams}
 Assume that condition \ref{en: C1} is satisfied. Then $\mathbb{P}^{\pi,Z}_{\theta^*}$ is ergodic.   
\end{coro}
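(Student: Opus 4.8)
The plan is to exploit the elementary fact that an asymptotically mean stationary measure and its stationary mean agree on shift-invariant events, and thereby reduce the ergodicity of $\P_{\theta^*}^{\pi,Z}$ to the ergodicity of the stationary homogeneous HMM $\P_{\theta^*}^{\pi,Y}$. By Theorem~\ref{Z AMS}, condition~\ref{en: C1} guarantees that $\P_{\theta^*}^{\pi,Z}$ is a.m.s. with stationary mean $\P_{\theta^*}^{\pi,Y}$, so this transfer principle is exactly what is available.

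First I would fix an arbitrary invariant set $A\in\mathcal{I}$, that is, $T^{-1}(A)=A$. Iterating gives $T^{-i}(A)=A$ and hence $\P_{\theta^*}^{\pi,Z}(T^{-i}(A))=\P_{\theta^*}^{\pi,Z}(A)$ for every $i\in\N$. The Ces\`aro averages in the definition of a.m.s. are therefore constant in $n$, and passing to the limit yields
\[
\P_{\theta^*}^{\pi,Z}(A)=\lim_{n\to\infty}\frac{1}{n}\sum_{i=1}^n\P_{\theta^*}^{\pi,Z}(T^{-i}(A))=\P_{\theta^*}^{\pi,Y}(A).
\]
Thus $\P_{\theta^*}^{\pi,Z}$ and $\P_{\theta^*}^{\pi,Y}$ coincide on the invariant $\sigma$-algebra $\mathcal{I}$, and it suffices to show $\P_{\theta^*}^{\pi,Y}(A)\in\{0,1\}$ for every such $A$.

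The remaining task, which I expect to be the only substantial point, is to verify that the stationary mean $\P_{\theta^*}^{\pi,Y}$ is itself ergodic. Here I would argue that, by irreducibility of $P_{\theta^*}$ from \ref{en: P1} on the finite state space $S$ together with uniqueness of the invariant distribution $\pi$, the stationary chain $X$ is ergodic. The stationary bivariate process $(X_n,Y_n)_{n\in\N}$, in which $Y_n$ is conditionally independent given $X$ with law depending only on $X_n$, is then a stationary HMM driven by the ergodic chain $X$ and is itself ergodic; since $Y$ is obtained from it by the coordinate projection, a measurable map commuting with the shift, ergodicity is inherited by $\P_{\theta^*}^{\pi,Y}$ as a factor.

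Combining the two steps, every invariant set $A$ satisfies $\P_{\theta^*}^{\pi,Z}(A)=\P_{\theta^*}^{\pi,Y}(A)\in\{0,1\}$, so $\P_{\theta^*}^{\pi,Z}$ is ergodic. The main obstacle is the ergodicity of $\P_{\theta^*}^{\pi,Y}$: while the passage from the invariant-set identity to the conclusion is immediate, establishing ergodicity of a function of a finite-state stationary ergodic Markov chain relies on the standard but non-trivial fact that shift-commuting measurable factors of ergodic processes remain ergodic.
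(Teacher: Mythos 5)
Your proposal is correct and follows essentially the same route as the paper: the paper likewise reduces the claim to the ergodicity of the stationary mean $\mathbb{P}^{\pi,Y}_{\theta^*}$ via Theorem~\ref{Z AMS}, citing \cite[Lemma~1]{leroux_maximum-likelihood_1992} for the ergodicity of the stationary HMM and \cite[Lemma~7.13]{gray_probability_2009} for the a.m.s.\ transfer, whereas you prove the (easy) needed direction of the latter directly from the definition and sketch the former. The only cosmetic remark is that your argument invokes \ref{en: P1} and the standing uniqueness of $\pi$, which are part of the paper's global setup rather than the corollary's stated hypothesis, exactly as in the paper's own citation of Leroux.
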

\begin{proof}
 From \cite[Lemma~1]{leroux_maximum-likelihood_1992} it follows that $\mathbb{P}^{\pi,Y}_{\theta^*}$
 is ergodic. Then, the assertion is implied by Theorem~\ref{Z AMS} and \cite[Lemma~7.13]{gray_probability_2009}, which
 essentially states that $\mathbb{P}^{\pi,Y}_{\theta^*}$ is ergodic if and only if $\mathbb{P}^{\pi,Z}_{\theta^*}$ is ergodic.
\end{proof}
\begin{coro}
 Assume that condition \ref{en: C1} is satisfied and let $k\in \N$. Then, for any $g\colon G^k \to \mathbb{R}$
 with $\mathbb{E}_{\theta^*}^{\pi}[\abs{g(Y_1,\dots,Y_k)}]<\infty$ we have
 \[
  \lim_{n\to \infty} \frac{1}{n} \sum_{j=1}^n g(Z_{j+1},\dots,Z_{j+k}) = \mathbb{E}_{\theta^*}^{\pi}[g(Y_1,\dots,Y_k)], \quad
  \mathbb{P}_{\theta^*}^{\pi}\text{-a.s.}
 \]
\end{coro}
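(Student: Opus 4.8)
The plan is to recognize the left-hand side as a Birkhoff average for the left shift $T$ on $G^{\N}$ and to invoke the pointwise ergodic theorem for asymptotically mean stationary measures, in exactly the form that Theorem~\ref{Z AMS} was set up to feed. First I would define $f\colon G^{\N}\to\R$ by $f((z_1,z_2,\dots)) := g(z_1,\dots,z_k)$, so that with the shift $T$ from \eqref{eq: left_shift} one has $f(T^j(z_1,z_2,\dots)) = g(z_{j+1},\dots,z_{j+k})$. Under $\P_{\theta^*}^{\pi}$ the sequence $Z$ has law $\mathbb{Q} := \P_{\theta^*}^{\pi,Z}$, and the average in question is $\frac1n\sum_{j=1}^n f(T^j Z)$. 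By Theorem~\ref{Z AMS}, which uses condition~\ref{en: C1}, $\mathbb{Q}$ is a.m.s.\ with stationary mean $\bar{\mathbb{Q}} := \P_{\theta^*}^{\pi,Y}$, and the integrability hypothesis reads precisely $\int \abs{f}\,\dint\bar{\mathbb{Q}} = \E_{\theta^*}^{\pi}[\abs{g(Y_1,\dots,Y_k)}]<\infty$, i.e.\ $f\in L^1(\bar{\mathbb{Q}})$.

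Next I would apply the a.m.s.\ pointwise ergodic theorem (see \cite{gray_probability_2009}): for an a.m.s.\ measure $\mathbb{Q}$ with stationary mean $\bar{\mathbb{Q}}$ and $f\in L^1(\bar{\mathbb{Q}})$, the averages $\frac1n\sum_{j=0}^{n-1} f\circ T^j$ converge $\mathbb{Q}$-a.s.\ to $\E_{\bar{\mathbb{Q}}}[f\mid\mathcal{I}]$, the conditional expectation with respect to the invariant $\sigma$-field $\mathcal{I}$. Replacing the summation range $\{0,\dots,n-1\}$ by $\{1,\dots,n\}$ alters the average by $\frac1n(f(T^nZ)-f(Z))$, which tends to zero a.s.\ as soon as the averages converge, so the limit is unaffected.

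It then remains to identify the limit as the claimed constant, and this is the step requiring the most care. By \cite[Lemma~1]{leroux_maximum-likelihood_1992} (as already used for Corollary~\ref{cor: erg_thm_ams}) the stationary mean $\bar{\mathbb{Q}}=\P_{\theta^*}^{\pi,Y}$ is ergodic, so $\E_{\bar{\mathbb{Q}}}[f\mid\mathcal{I}] = \int f\,\dint\bar{\mathbb{Q}} = \E_{\theta^*}^{\pi}[g(Y_1,\dots,Y_k)]$ holds $\bar{\mathbb{Q}}$-a.s. The subtlety is that the ergodic theorem delivers convergence $\mathbb{Q}$-a.s., whereas the identification of $\E_{\bar{\mathbb{Q}}}[f\mid\mathcal{I}]$ with the constant is a priori only $\bar{\mathbb{Q}}$-a.s. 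I would bridge this by the elementary fact that an a.m.s.\ measure and its stationary mean agree on every invariant set: for invariant $A$ one has $T^{-j}A=A$, whence $\bar{\mathbb{Q}}(A)=\lim_n \frac1n\sum_{j=1}^n\mathbb{Q}(T^{-j}A)=\mathbb{Q}(A)$. Since the exceptional set $\{\E_{\bar{\mathbb{Q}}}[f\mid\mathcal{I}]\neq\E_{\theta^*}^{\pi}[g(Y_1,\dots,Y_k)]\}$ is $\mathcal{I}$-measurable and of $\bar{\mathbb{Q}}$-measure zero, it has $\mathbb{Q}$-measure zero as well, so the convergence to the constant holds $\mathbb{Q}$-a.s., i.e.\ $\P_{\theta^*}^{\pi}$-a.s.\ as stated. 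I expect the main obstacle to be not the ergodic averaging itself but precisely this transfer of the almost-sure identification of the limit from the stationary mean $\bar{\mathbb{Q}}$ to the a.m.s.\ law $\mathbb{Q}$.
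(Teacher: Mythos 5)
Your proposal is correct and follows essentially the same route as the paper, which likewise combines Theorem~\ref{Z AMS} (the a.m.s.\ property under \ref{en: C1}) with the pointwise ergodic theorem for a.m.s.\ measures from \cite{gray_probability_2009}. The only cosmetic difference is that the paper invokes the ergodicity of $\P_{\theta^*}^{\pi,Z}$ itself (already established in Corollary~\ref{cor: erg_thm_ams} via the equivalence of ergodicity for an a.m.s.\ measure and its stationary mean), whereas you work with the ergodicity of $\P_{\theta^*}^{\pi,Y}$ and carry out the transfer to $\P_{\theta^*}^{\pi,Z}$ by hand through the agreement of the two measures on invariant sets --- which is precisely the content of the cited equivalence.
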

\begin{proof}
 By the a.m.s. property and the ergodicity of $\mathbb{P}^{\pi,Z}_{\theta^*}$ 
 the assertion is implied by \cite[Theorem~8.1.]{gray_probability_2009}. 
\end{proof}

For $z=(z_{i})_{i\in \N} \in G^{\N}$ and $k,m\in\N$ with $k< m$
we use $z_{k:m}$ to denote a segment of $z$. Specifically let
$
 z_{k:m} = (z_k,\dots,z_m).
$
Let $\lambda_k = \bigotimes_{i=1}^k \lambda$ be 
the product measure of $\lambda$ with itself, i.e., the measurable space
$(G^k,\bigotimes_{i=1}^k \mathcal{B}(G))$ is equipped with reference measure $\lambda_k$.
Now define
\[
 p_{\theta^*}^\pi(z_{1:k} \mid z_{k+1:m})
 := \frac{p_{\theta^*}^\pi (z_{1:m})}{\int_{G^k} p_{\theta^*}^{\pi}(z_{1:m}) \lambda_k(\dint z_{1:k})}.
\]
We aim to apply \cite[Theorem~3]{barron_strong_1985}. For this we need the concept
of conditional mutual information.
\begin{Def}
For $k,m,n\in\N$ define the $(k,m,n)$-conditional mutual information of $Z$ by 

\[
I^Z_{k,m}(n) \coloneqq 
\E_{\theta^*}^{\pi}\left[ \log \left(\frac{p_{\theta^*}^{\pi}(Z_{1:k}\mid Z_{k+1:k+m+n})}
{p_{\theta^*}^{\pi}(Z_{1:k}\mid Z_{k+1:k+m})} \right) \right].
\]

\end{Def}

\begin{remark}
Observe that the $(k,m,n)$-conditional mutual information of $Z$ 
coincides with the 
definition of the conditional mutual information of  
$Z_{k+m+1:k+m+n}$ and $Z_{1:k}$
given 
$Z_{k+1:k+m}$ in \cite[p.~1296]{barron_strong_1985}.
Note that
by \cite[Lemma~3]{barron_strong_1985} it is known that 
$
 I_{k,m}^Z := \lim_{n\to \infty} I^Z_{k,m}(n)
$
exists.
\end{remark}

\begin{lemma}\label{cond mutual information}
Assume that condition \ref{en: H4} is satisfied. Then, for every $k,m\in\N$ we have
$
I^Z_{k,m}:= \lim_{n\to \infty} I^Z_{k,m}(n)<\infty.
$
\end{lemma}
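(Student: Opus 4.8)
The plan is to read $I^Z_{k,m}(n)$ as the conditional mutual information $I(Z_{1:k};Z_{k+m+1:k+m+n}\mid Z_{k+1:k+m})$, as recorded in the remark preceding the statement, and to exploit that in a DHMM every channel of dependence between a past and a future block of observations is forced through the hidden chain at a single instant, whose state space is the finite set $S$. This caps the quantity by $\log K$ uniformly in $n$.

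First I would extract the conditional-independence backbone. Because $(X_n)$ is Markov and, given $X$, the $Z_i$ are independent with $Z_i$ a function of $X_i$ alone, the block $Z_{1:k}$ is a function of $(X_{1:k},\text{noise})$ and $Z_{k+1:k+m+n}$ of $(X_{k+1:k+m+n},\text{noise})$; the Markov splitting $X_{1:k}\perp X_{k+2:k+m+n}\mid X_{k+1}$ then yields
\[
Z_{1:k}\ \perp\ Z_{k+m+1:k+m+n}\ \mid\ (X_{k+1},Z_{k+1:k+m}).
\]
Writing $h_s$ for the conditional $\lambda_k$-density of $Z_{1:k}$ given $X_{k+1}=s$, this is the same as the mixture identity
\[
p_{\theta^*}^{\pi}(Z_{1:k}\mid Z_{k+1:k+m+n})=\sum_{s\in S}h_s(Z_{1:k})\,\P_{\theta^*}^{\pi}(X_{k+1}=s\mid Z_{k+1:k+m+n}),
\]
in which $h_s$ is free of $n$. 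Conditionally on $Z_{k+1:k+m}$ the first display says $Z_{1:k}-X_{k+1}-Z_{k+m+1:k+m+n}$ is a Markov triple, so the conditional data-processing inequality together with the finiteness of $S$ gives
\[
I^Z_{k,m}(n)\ \le\ I(Z_{1:k};X_{k+1}\mid Z_{k+1:k+m})\ \le\ H(X_{k+1}\mid Z_{k+1:k+m})\ \le\ \log K .
\]
Since $I^Z_{k,m}(n)$ is nondecreasing in $n$ and its limit exists by \cite[Lemma~3]{barron_strong_1985}, passing to the limit yields $I^Z_{k,m}\le\log K<\infty$.

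The role of condition \ref{en: H4} is to make this chain of (conditional) mutual-information identities legitimate rather than an $\infty-\infty$ manipulation. Using the defining sum for $p^{\pi}_{\theta^*}$ one bounds $p^{\pi}_{\theta^*}(z_{1:t})\le K^{\,t+1}\prod_{i=1}^{t}\max_{s}f_{\theta^*,i}(s,z_i)$ from above and, using a fixed state path of positive probability guaranteed by irreducibility \ref{en: P1}, from below; both sides are controlled by $\sum_{i=1}^{t}\sum_{s\in S}\abs{\log f_{\theta^*,i}(s,Z_i)}$, which is integrable under \ref{en: H4}. Hence $\E^{\pi}_{\theta^*}\!\big[\,\abs{\log p^{\pi}_{\theta^*}(Z_{1:t})}\,\big]<\infty$ for every $t$, so each $I^Z_{k,m}(n)$ and the marginal information $I(Z_{1:k};Z_{k+1:k+m})$ are genuine finite reals and the chain rule $I(Z_{1:k};Z_{k+1:k+m+n})=I(Z_{1:k};Z_{k+1:k+m})+I^Z_{k,m}(n)$ is well posed. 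The main obstacle is exactly this bookkeeping: certifying that the density-based definition coincides with the abstract conditional mutual information (so that nonnegativity, monotonicity and the data-processing bound all apply) while simultaneously excluding indeterminate forms, and carrying the mixed discrete/continuous conditioning on $(X_{k+1},Z_{k+1:k+m})$ through without losing measurability.
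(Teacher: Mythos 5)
Your proof is correct, but it takes a genuinely different route from the paper's. The paper argues analytically: it bounds $I^Z_{k,m}(n)$ by the sum $\E_{\theta^*}^{\pi}[\abs{\log p_{\theta^*}^{\pi}(Z_{1:k}\mid Z_{k+1:k+m})}]+\E_{\theta^*}^{\pi}[\abs{\log p_{\theta^*}^{\pi}(Z_{1:k}\mid Z_{k+1:k+m+n})}]$, then shows from the defining sum over hidden paths that the conditional density of $Z_{1:k}$ given any future block is dominated by $\max_{x_1,\dots,x_k\in S}\prod_{i=1}^k f_{\theta^*,i}(x_i,Z_i)$, so that \ref{en: H4} yields a bound that is uniform in $n$ because it involves only the first $k$ emission densities. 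You instead exploit the information-theoretic structure: the two-sided Markov splitting at $X_{k+1}$ gives the conditional Markov triple $Z_{1:k}-X_{k+1}-Z_{k+m+1:k+m+n}$ given $Z_{k+1:k+m}$, and data processing plus finiteness of $S$ caps the quantity by $\log K$. Your conditional-independence claim is correct (it is the weak-union consequence of $Z_{1:k}\perp Z_{k+1:k+m+n}\mid X_{k+1}$, which follows from the path-sum factorization at $x_{k+1}$), and the bound $\log K$ is both sharper and conceptually cleaner than the paper's, since it is dimension-free in the emission densities. If anything, you undersell your own argument: $I^Z_{k,m}(n)$ is an average over $Z_{k+1:k+m+n}$ of Kullback--Leibler divergences, hence automatically well defined in $[0,\infty]$, and the data-processing and entropy bounds hold for abstract conditional mutual information without integrability hypotheses; so \ref{en: H4} is not really needed for your route at all, whereas it is essential to the paper's. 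What the paper's approach buys in exchange is self-containedness: it stays entirely within the density formalism already set up and avoids having to certify (via Dobrushin-type results) that the density-ratio definition coincides with abstract conditional mutual information on Polish spaces, which is the one step of your argument that would need a precise citation.
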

\begin{proof}
For $n\in \N$ we obtain
\begin{align*}
I^Z_{k,m}(n)
&  \leq 
  \E_{\theta^*}^{\pi}\left[\abs{\log p_{\theta^*}^{\pi}(Z_{1:k}\mid Z_{k+1:k+m})}\right]\\
&\qquad+ \E_{\theta^*}^{\pi}\left[\abs{\log p_{\theta^*}^{\pi}(Z_{1:k}\mid Z_{k+1:k+m+n})}\right].
\end{align*}
For $1\leq k<j$ we have by 
using $\int_{G^k} \prod_{i=1}^k f_{\theta^*,i}(s_i,z_i) \lambda_k(\dint z_{1:k})=1$
that 
\begin{align*}
     p_{\theta^*}^{\pi}(Z_{1:j})
& = \sum_{s_1,\dots,s_k\in S} \pi(s_1) 
\prod_{i=1}^{k} f_{\theta^*,i}(s_i,Z_i) \prod_{i=1}^{k-1} P_{\theta^*}(s_i,s_{i+1})\\
& \quad\times \sum_{s_{k+1},\dots,s_{j+1}\in S} P_{\theta^*}(s_k,s_{k+1}) \prod_{\ell=k+1}^{j} 
f_{\theta^*,\ell}(s_\ell,Z_\ell) P_{\theta^*}(s_\ell,s_{\ell+1})\\
&\leq \max_{x_1,\dots,x_k\in S} \prod_{i=1}^k f_{\theta^*,i}(x_i,Z_i)\, \int\limits_{G^k} p_{\theta^*}^{\pi}(z_{1:k},Z_{k+1:j})\lambda_k(\dint z_{1:k}).
\end{align*}
By \ref{en: H4} this leads to
\[
  \E_{\theta^*}^{\pi}\left[\abs{\log p_{\theta^*,k\mid j}^{\pi}(Z_{1:k}\mid Z_{k+1:j})}\right]
  \leq \max_{x_1,\dots,x_k\in S}
  \sum\limits_{i=1}^{k}\E_{\theta^*}^{\pi}
  \left[\abs{\log\left(f_{\theta^*,i}(x_i,Z_i)\right)}\right] <\infty,
\]
which gives $I_{k,m}^Z(n)<\infty$ for any $n\in \N$ and implies the assertion.
\end{proof}
The next result is a consequence of the
generalized Shannon-McMillan-Breiman
theorem for asymptotic mean stationary processes, see \cite[Theorem~3]{barron_strong_1985}.
\begin{thm}\label{Barron entropy}
Assume that the conditions \ref{en: P1}, \ref{en: C1}, \ref{en: H1} and \ref{en: H4} 
are satisfied. Then
\begin{equation*}
\lim_{n \to \infty} n^{-1} \log p_{\theta^*}^{\pi}(Z_1,\ldots,Z_n)= 
\ell(\theta^*)\quad \P_{\theta^*}^{\pi}\text{-a.s.}
\end{equation*}
(Recall that $\ell(\theta^*)$ is given by \eqref{eq: ell}.)
\end{thm}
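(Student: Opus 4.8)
The plan is to derive the statement from the generalized Shannon--McMillan--Breiman theorem for asymptotic mean stationary processes, \cite[Theorem~3]{barron_strong_1985}, applied to the observed process. I work on the sequence space $(G^{\N},\B)$ with the left shift $T$ from \eqref{eq: left_shift}, take $\mathbb{Q}=\P_{\theta^*}^{\pi,Z}$, and use $\lambda$ (equivalently $\lambda_n$ on the first $n$ coordinates) as reference measure, so that $p_{\theta^*}^{\pi}(z_{1:n})$ is precisely the density $\frac{\dint \mathbb{Q}_n}{\dint \lambda_n}$. The first ingredient is Theorem~\ref{Z AMS}: under \ref{en: C1} the measure $\P_{\theta^*}^{\pi,Z}$ is a.m.s.\ with stationary mean $\P_{\theta^*}^{\pi,Y}$, and by Lemma~\ref{Rechard} the stationary mean asymptotically dominates $\mathbb{Q}$, which is the absolute-continuity hypothesis underlying Barron's theorem.

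Second, I would check the two finiteness conditions that \cite[Theorem~3]{barron_strong_1985} requires. The relative entropy rate of the stationary mean must exist and be finite; this rate is exactly $\ell(\theta^*)$, whose existence is guaranteed by Theorem~\ref{Douc entropy} under \ref{en: P1} and \ref{en: H1}, since the $n$-dimensional density of $\P_{\theta^*}^{\pi,Y}$ with respect to $\lambda_n$ is $q_{\theta^*}^{\pi}$ and hence $\lim_{n} n^{-1}\E_{\theta^*}^{\pi}[\log q_{\theta^*}^{\pi}(Y_{1:n})]=\ell(\theta^*)$ by \eqref{eq: ell}. The information-stability hypothesis amounts to the finiteness of the conditional mutual informations $I^Z_{k,m}$ for all $k,m\in\N$, which is exactly Lemma~\ref{cond mutual information} and uses \ref{en: H4}; the per-coordinate integrability $\E_{\theta^*}^{\pi}[\abs{\log f_{\theta^*,n}(s,Z_n)}]<\infty$ from \ref{en: H4} is what makes the bound in that lemma finite.

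With these hypotheses in place, \cite[Theorem~3]{barron_strong_1985} yields that $n^{-1}\log p_{\theta^*}^{\pi}(Z_{1:n})$ converges $\mathbb{Q}$-almost surely, i.e.\ $\P_{\theta^*}^{\pi,Z}$-a.s., to the relative entropy rate of the stationary mean with respect to $\lambda$. The final step is to identify this limit: because the stationary mean is $\P_{\theta^*}^{\pi,Y}$, its relative entropy rate equals $\ell(\theta^*)$ by the computation just described, so the limit is $\ell(\theta^*)$. Transferring the $\P_{\theta^*}^{\pi,Z}$-a.s.\ statement for the coordinate process back to a $\P_{\theta^*}^{\pi}$-a.s.\ statement for $(Z_n)_{n\in\N}$ then completes the argument. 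I expect the main obstacle to be the careful bookkeeping of Barron's hypotheses rather than any new estimate: one must make sure that the limit delivered by the a.m.s.\ theorem---the entropy rate of the stationary mean, computed through the \emph{homogeneous} density $q_{\theta^*}^{\pi}$---is genuinely $\ell(\theta^*)$ and not some rate attached to the \emph{inhomogeneous} density $p_{\theta^*}^{\pi}$ used along the sample path. This matching rests entirely on the identification of the stationary mean in Theorem~\ref{Z AMS} together with the finiteness supplied by Lemma~\ref{cond mutual information}.
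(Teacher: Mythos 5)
Your proposal is correct and follows essentially the same route as the paper: establish the a.m.s.\ property of $\P_{\theta^*}^{\pi,Z}$ with stationary mean $\P_{\theta^*}^{\pi,Y}$ via Theorem~\ref{Z AMS}, identify the entropy rate of the stationary mean as $\ell(\theta^*)$ via Theorem~\ref{Douc entropy}, secure finiteness of the conditional mutual informations $I^Z_{k,m}$ via Lemma~\ref{cond mutual information}, and conclude with \cite[Theorem~3]{barron_strong_1985}. The additional bookkeeping you supply (the Radon--Nikodym identification of $p_{\theta^*}^{\pi}$ and the matching of the limit to the homogeneous rate) is exactly the implicit content of the paper's shorter argument.
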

\begin{proof}
Theorem \ref{Z AMS} shows that $\P_{\theta^*}^{\pi,Z}$ is a.m.s. with stationary mean $\P_{\theta^*}^{\pi,Y}$. 
Theorem \ref{Douc entropy} yields
\[
\lim\limits_{n\rightarrow\infty} n^{-1} \log q_{\theta^*}^{\pi}(Y_1,\ldots,Y_n) 
= \ell(\theta^*) \quad \P_{\theta^*}^{\pi}\text{-a.s.}
\] 
Lemma \ref{cond mutual information} guarantees that $I^Z_{k,m}<\infty$ for all $k,m\in\N$. 
Then, the statement follows by \cite[Theorem~3]{barron_strong_1985}.
\end{proof}

We need some auxiliary lemmas that
ensure that the ratio of 
$p^{\nu}_{\theta^*}(z_1,\ldots,z_n)$ and 
$q^{\nu}_{\theta^*}(z_1,\ldots,z_n)$ does not diverge exponentially or faster.

\begin{lemma}\label{slower than expoential}  Assume that condition \ref{en: C2} is satisfied. Then,
with $k\in \mathbb{N}$ from \ref{en: C2}, we have
\[
\limsup\limits_{n\rightarrow\infty}n^{-1}\log\left( 
\E_{\theta^*}^{\pi}\left[\prod\limits_{i=k}^n\max\limits_{s\in S} 
\frac{f_{\theta^*,i}(s,Z_i)}{f_{\theta^*}(s,Z_i)}\right]\right)\leq 0.
\]
\end{lemma}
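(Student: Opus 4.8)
The plan is to exploit the conditional independence of the observations $(Z_i)_{i\in\N}$ given the hidden chain $X=(X_i)_{i\in\N}$, so that the expectation of the product of density ratios factorizes into a product of deterministic functions of $X$, which I can then control index by index using the two quantitative parts of \ref{en: C2}. First I would abbreviate the per-index ratio
\begin{equation*}
 R_i := \max_{s\in S}\frac{f_{\theta^*,i}(s,Z_i)}{f_{\theta^*}(s,Z_i)},
\end{equation*}
and observe that $R_i$ is a nonnegative function of $Z_i$ \emph{alone}: the maximum runs over the internal state label $s$ and does not couple different time indices. Since under $\P_{\theta^*}^{\pi}$ the variables $(Z_i)_{i\in\N}$ are independent conditionally on $X$, and the conditional law of $Z_i$ given $X$ depends only on $X_i$, the tower property would yield
\begin{equation*}
 \E_{\theta^*}^{\pi}\left[\prod_{i=k}^n R_i\right]
 = \E_{\theta^*}^{\pi}\left[\prod_{i=k}^n g_i(X_i)\right],
 \qquad g_i(s) := \E_{\theta^*}^{\pi}\left[R_i\mid X_i=s\right].
\end{equation*}
This factorization is the key structural step, and verifying it cleanly, namely that the internal maximum does not destroy the conditional independence across $i$, is where I expect to be the most careful.

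Next, because $S$ is finite I would set $M_i := \max_{s\in S} g_i(s)$ and use the pointwise bound $0\le g_i(X_i)\le M_i$ to obtain $\E_{\theta^*}^{\pi}[\prod_{i=k}^n R_i]\le \prod_{i=k}^n M_i$. The finiteness assertion in \eqref{eq: finite} guarantees $M_i<\infty$ for $i\ge k$, so each $\log M_i$ is well defined, and taking logarithms reduces the claim to
\begin{equation*}
 \limsup_{n\to\infty}\frac{1}{n}\sum_{i=k}^n \log M_i \le 0.
\end{equation*}

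To close this I would use that for a finite index set the maximum commutes with the $\limsup$, so \eqref{al: expect} gives $\limsup_{i\to\infty} M_i = \max_{s\in S}\limsup_{i\to\infty} g_i(s)\le 1$, hence $\limsup_{i\to\infty}\log M_i\le 0$. A standard Ces\`aro argument then finishes the proof: for $\varepsilon>0$ I would pick $N\ge k$ with $\log M_i\le\varepsilon$ for all $i\ge N$, split the average into the fixed finite head $\sum_{i=k}^{N-1}\log M_i$, which vanishes after division by $n$, and the tail, which is at most $\varepsilon$; this shows the $\limsup$ of the averages is at most $\varepsilon$, and letting $\varepsilon\downarrow 0$ gives the assertion. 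The only additional bookkeeping is that $\log M_i$ might be $-\infty$, but since every estimate above is an upper bound this causes no difficulty. Thus the genuine content lies entirely in the conditional-independence factorization; the remaining steps are the finite-$S$ interchange of $\max$ and $\limsup$ together with the elementary Ces\`aro lemma.
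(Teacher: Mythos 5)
Your proposal is correct and follows essentially the same route as the paper: condition on the hidden chain, use the conditional independence of the $Z_i$ given $X$ to factor the expectation of the product into $\prod_i g_i(X_i)$ with $g_i(s)=\E_{\theta^*}^{\pi}[R_i\mid X_i=s]$, bound each factor by $M_i=\max_{s}g_i(s)$, and conclude via \eqref{al: expect} together with a Ces\`aro argument (which the paper leaves implicit but you spell out). No gaps.
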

\begin{proof}
The assertion follows from
\begin{align*}
&\limsup\limits_{n\rightarrow\infty}n^{-1}\log\left( 
\E_{\theta^*}^{\pi}\left[\prod\limits_{i=k}^n\max\limits_{s\in S} 
\frac{f_{\theta^*,i}(s,Z_i)}{f_{\theta^*}(s,Z_i)}\right]\right)\\
&=\limsup\limits_{n\rightarrow\infty}n^{-1}\log\left(\E_{\theta^*}^{\pi}\left[ 
\E_{\theta^*}^{\pi}\left[\prod\limits_{i=k}^n\max\limits_{s\in S} 
\frac{f_{\theta^*,i}(s,Z_i)}{f_{\theta^*}(s,Z_i)}\mid X_k,\ldots,X_n\right]\right]\right)\\
&=\limsup\limits_{n\rightarrow\infty}n^{-1}\log\left(\E_{\theta^*}^{\pi}\left[ 
\prod\limits_{i=k}^n\E_{\theta^*}^{\pi}\left[\max\limits_{s\in S} 
\frac{f_{\theta^*,i}(s,Z_i)}{f_{\theta^*}(s,Z_i)}\mid X_i\right]\right]\right)\\
&\leq\limsup\limits_{n\rightarrow\infty}n^{-1}\log\left(\E_{\theta^*}^{\pi}\left[ 
\prod\limits_{i=k}^n \max\limits_{s'\in S}\E_{\theta^*}^{\pi}\left[\max\limits_{s\in S} 
\frac{f_{\theta^*,i}(s,Z_i)}{f_{\theta^*}(s,Z_i)}\mid X_i=s'\right]\right]\right)\\
&=\limsup\limits_{n\rightarrow\infty}n^{-1} 
\sum\limits_{i=k}^n \max\limits_{s'\in S} \log\left( \E_{\theta^*}^{\pi}\left[\max\limits_{s\in S} 
\frac{f_{\theta^*,i}(s,Z_i)}{f_{\theta^*}(s,Z_i)}\mid X_i=s'\right]\right)\leq 0,
\end{align*}
where the last line follows from assumption \ref{en: C2}, especially \eqref{al: expect}.
\end{proof}
By the same arguments as in the previous lemma we obtain the following result.
\begin{lemma}\label{theta slower than expoential}  Assume that condition \ref{en: C3} is satisfied. Then,
for $k\in \mathbb{N}$ and $\mathcal{E}_{\theta}$ from \ref{en: C3}, we have
\[
\lim\limits_{n\rightarrow\infty}n^{-1}\log\left( 
\E_{\theta^*}^{\pi}\left[\prod\limits_{i=k}^n \sup\limits_{\theta'\in \mathcal{E}_{\theta}}   \max\limits_{s\in S} 
\frac{f_{\theta',i}(s,Z_i)}{f_{\theta'}(s,Z_i)}\right]\right)\leq 0.
\]
\end{lemma}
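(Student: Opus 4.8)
The plan is to mirror the computation just carried out for Lemma~\ref{slower than expoential}, replacing the integrand $\max_{s\in S} f_{\theta^*,i}(s,Z_i)/f_{\theta^*}(s,Z_i)$ by $\sup_{\theta'\in\mathcal{E}_\theta}\max_{s\in S} f_{\theta',i}(s,Z_i)/f_{\theta'}(s,Z_i)$ throughout, and invoking condition~\ref{en: C3} in place of~\ref{en: C2}. First I would apply the tower property of conditional expectation, conditioning on the whole hidden path $X_k,\ldots,X_n$. Since, conditioned on $(X_n)_{n\in\N}$, the observations $(Z_i)_{i\in\N}$ are independent by the DHMM structure, the conditional expectation of the product factorizes into a product of single-index conditional expectations $\E_{\theta^*}^\pi[\,\cdot\mid X_i]$, each factor being a function of $Z_i$ alone.

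The second step is to bound each conditional expectation $\E_{\theta^*}^\pi[\,\cdot\mid X_i]$ by its maximum over the state $s'\in S$ that $X_i$ may attain, pull this deterministic bound out of the product, and then take logarithms, turning the product of maxima into a sum. This yields the Ces\`aro-type upper bound
\[
n^{-1}\sum_{i=k}^n \max_{s'\in S}\log\left(\E_{\theta^*}^{\pi}\left[\sup_{\theta'\in\mathcal{E}_\theta}\max_{s\in S} \frac{f_{\theta',i}(s,Z_i)}{f_{\theta'}(s,Z_i)}\mid X_i=s'\right]\right).
\]
Now I would invoke \eqref{al: theta expect} of~\ref{en: C3}, which states that each such conditional expectation converges to $1$ as $i\to\infty$ for every fixed $s'$; hence each summand's maximum over $s'$ of the logarithm tends to $\log 1 = 0$, and by Ces\`aro convergence the average tends to $0$. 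Because \ref{en: C3} supplies a genuine limit equal to $1$, rather than only the $\limsup\leq 1$ of \eqref{al: expect}, the conclusion upgrades from a $\limsup\leq 0$, as in Lemma~\ref{slower than expoential}, to the full $\lim\leq 0$ claimed here. The integrability in \eqref{eq: theta finite} ensures that the expectations and logarithms are finite for $i\geq k$, so every step is well-defined.

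The main obstacle, and the only genuine departure from the previous proof, is the measurability of the integrand after inserting the uncountable supremum $\sup_{\theta'\in\mathcal{E}_\theta}$: a pointwise supremum of measurable functions over an uncountable index set need not be measurable. I would resolve this using the continuity assumption~\ref{en: H3}, by which $\theta'\mapsto f_{\theta',i}(s,z)/f_{\theta'}(s,z)$ is continuous, so that the supremum over the neighborhood $\mathcal{E}_\theta$ coincides with the supremum over a countable dense subset and is therefore measurable. With measurability secured, the conditional independence across $i$, and hence the factorization in the first step, carries over verbatim, and the remainder of the argument is routine.
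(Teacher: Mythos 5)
Your proposal is correct and takes essentially the same route as the paper, whose entire proof of this lemma is the remark that it follows ``by the same arguments'' as Lemma~\ref{slower than expoential}; you have simply spelled those arguments out (tower property, conditional independence of the $Z_i$ given the hidden path, bounding each factor by its worst-case state, and the Ces\`aro convergence supplied by \eqref{al: theta expect}), together with a sensible additional remark on measurability of the supremum via \ref{en: H3}. The only quibble is that the chain of inequalities yields an upper bound, hence strictly a $\limsup\le 0$ rather than an existing limit; this imprecision is already present in the paper's own statement and is harmless for its use in Corollary~\ref{lemma: main lemma}.
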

The next result allows us to carry the limit from Theorem~\ref{Barron entropy} over, to the case where we keep
the finite trajectory of $Z$,
but consider $q_{\theta^*}^{\nu}$ instead of $p_{\theta^*}^{\pi}$ for suitable $\nu \in \mathcal{P}(S)$.
\begin{thm}\label{thm: q_Z_well_defined_ell}
Assume that the conditions \ref{en: P1}, \ref{en: H1}, \ref{en: H4}, \ref{en: C1} 
and \ref{en: C2} are satisfied. Then
\begin{equation*}
\lim\limits_{n\rightarrow\infty} n^{-1} \log q^{\nu}_{\theta^*}(Z_1,\ldots,Z_n) 
= \ell(\theta^*)\quad \P_{\theta^*}^{\pi}\text{-a.s.}
\end{equation*}
for any probability measure $\nu\in\mathcal{P}(S)$ which is 
strictly positive if and only if $\pi$ is strictly positive.
\end{thm}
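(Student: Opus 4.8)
The plan is to sandwich $n^{-1}\log q^{\nu}_{\theta^*}(Z_{1:n})$ between $\ell(\theta^*)$ from above and below, using Barron's Theorem~\ref{Barron entropy} (which controls the true inhomogeneous likelihood $p^{\pi}_{\theta^*}$) as the anchor and transferring that limit to the quasi-likelihood $q^{\nu}_{\theta^*}$ by two separate mechanisms, one for each direction.

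For the upper bound I would exploit that $q^{\nu}_{\theta^*}(\cdot)$ is itself a probability density on $(G^n,\lambda_n)$, being the homogeneous likelihood of $(Y_1,\dots,Y_n)$ with initial law $\nu$. Hence the likelihood ratio against the true law of $Z$ has expectation at most one,
\[
\E_{\theta^*}^{\pi}\!\left[\frac{q^{\nu}_{\theta^*}(Z_{1:n})}{p^{\pi}_{\theta^*}(Z_{1:n})}\right]
\leq \int_{G^n} q^{\nu}_{\theta^*}(z_{1:n})\,\lambda_n(\dint z_{1:n}) = 1 .
\]
A Markov inequality then gives $\P_{\theta^*}^{\pi}\big(q^{\nu}_{\theta^*}(Z_{1:n}) \geq e^{n\epsilon}\,p^{\pi}_{\theta^*}(Z_{1:n})\big) \leq e^{-n\epsilon}$, which is summable, so Borel--Cantelli yields $\limsup_{n} n^{-1}\log\big(q^{\nu}_{\theta^*}(Z_{1:n})/p^{\pi}_{\theta^*}(Z_{1:n})\big) \leq 0$ almost surely. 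Combining this with $n^{-1}\log p^{\pi}_{\theta^*}(Z_{1:n}) \to \ell(\theta^*)$ from Theorem~\ref{Barron entropy} produces $\limsup_{n} n^{-1}\log q^{\nu}_{\theta^*}(Z_{1:n}) \leq \ell(\theta^*)$.

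For the lower bound I would compare the two likelihoods pointwise. Writing $r_i := \max_{s\in S} f_{\theta^*,i}(s,Z_i)/f_{\theta^*}(s,Z_i)$, the elementary bound $f_{\theta^*,i}(x,Z_i) \leq r_i\,f_{\theta^*}(x,Z_i)$ for every state $x$ gives, term by term in the defining sums, $p^{\nu}_{\theta^*}(Z_{1:n}) \leq \big(\prod_{i=1}^n r_i\big)\, q^{\nu}_{\theta^*}(Z_{1:n})$, so that $n^{-1}\log q^{\nu}_{\theta^*}(Z_{1:n}) \geq n^{-1}\log p^{\nu}_{\theta^*}(Z_{1:n}) - n^{-1}\sum_{i=1}^n \log r_i$. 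Lemma~\ref{slower than expoential} controls $\E_{\theta^*}^{\pi}[\prod_{i=k}^n r_i]$ subexponentially, so another Markov/Borel--Cantelli step gives $\limsup_{n} n^{-1}\sum_{i=k}^n \log r_i \leq 0$ almost surely, while the finitely many initial factors are almost surely finite by \eqref{eq: finite} and contribute nothing to the rate. Finally $n^{-1}\log p^{\nu}_{\theta^*}(Z_{1:n}) \to \ell(\theta^*)$ follows from Theorem~\ref{Barron entropy} after a routine change of initial distribution: since $P_{\theta^*}$ is irreducible by \ref{en: P1} the invariant law $\pi$ is strictly positive, hence $\nu$ is strictly positive as well, and then $c\,\pi(x) \leq \nu(x) \leq C\,\pi(x)$ for constants $0<c\leq C<\infty$ bounds $p^{\nu}_{\theta^*}/p^{\pi}_{\theta^*}$ between $c$ and $C$. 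This yields $\liminf_{n} n^{-1}\log q^{\nu}_{\theta^*}(Z_{1:n}) \geq \ell(\theta^*)$, and together with the upper bound the claim follows.

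The main obstacle is that assumption \ref{en: C2} controls the ratio $f_{\theta^*,i}/f_{\theta^*}$ in only one direction (inhomogeneous over homogeneous), so one can dominate $p^{\nu}_{\theta^*}$ by $q^{\nu}_{\theta^*}$ but has no direct reverse density bound to dominate $q^{\nu}_{\theta^*}$ by $p^{\nu}_{\theta^*}$. The resolution, and the crux of the argument, is that the upper bound does not require any reverse bound: it follows purely from the fact that $q^{\nu}_{\theta^*}$ integrates to one, via the expectation-at-most-one likelihood-ratio identity above. This asymmetry between the two directions, namely pointwise domination for the lower bound and normalization for the upper bound, is precisely what makes the single one-sided control supplied by \ref{en: C2} sufficient.
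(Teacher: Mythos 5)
Your proof is correct and follows essentially the same route as the paper: the upper bound via the expectation-at-most-one likelihood ratio plus Markov/Borel--Cantelli, the lower bound via the pointwise domination $p \leq (\prod_i r_i)\,q$ combined with Lemma~\ref{slower than expoential}, and Theorem~\ref{Barron entropy} as the anchor. The only (immaterial) difference is bookkeeping: the paper first establishes the limit for $q^{\pi}_{\theta^*}$ and then changes the initial distribution at the level of $q$, whereas you handle $\nu$ directly in the upper bound and change the initial distribution at the level of $p$ in the lower bound.
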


\begin{proof}
From Theorem~\ref{Barron entropy} it follows that
\begin{equation} \label{eq: zero_step}
\lim\limits_{n\rightarrow\infty} n^{-1}\log p^{\pi}_{\theta^*}(Z_1,\ldots,Z_n) 
= \ell(\theta^*) \quad \P^{\pi}_{\theta^*}\text{-a.s.}
\end{equation}
and by using \ref{en: C2} we first show
\begin{equation} \label{eq: first_step}
 \lim_{n\to \infty} n^{-1} \log q_{\theta^*}^{\pi} (Z_1,\dots,Z_n) = \ell(\theta^*) \quad \P_{\theta^*}^{\pi}\text{-a.s.}
\end{equation}
For any $\varepsilon>0$ we obtain by Markov's inequality that  
\begin{align*}
\P_{\theta^*}^{\pi}\left( n^{-1} \log\left(\frac{q_{\theta^*}^{\pi}(Z_1,\ldots,Z_n)}{p_{\theta^*}^{\pi}(Z_1,\ldots,Z_n)}\right)\geq \varepsilon\right)
&=\P_{\theta^*}^{\pi}\left( \frac{q_{\theta^*}^{\pi}(Z_1,\ldots,Z_n)}{p_{\theta^*}^{\pi}(Z_1,\ldots,Z_n)}\geq \exp(n\varepsilon)\right)\\
&\leq \exp(-n\epsilon)\cdot \E_{\theta^*}^{\pi}\left[ \frac{q_{\theta^*}^{\pi}(Z_1,\ldots,Z_n)}{p_{\theta^*}^{\pi}(Z_1,\ldots,Z_n)}\right].
\end{align*}
By the fact that
$
\E_{\theta^*}^{\pi}\left[ \frac{q_{\theta^*}^{\pi}(Z_1,\ldots,Z_n)}{p_{\theta^*}^{\pi}(Z_1,\ldots,Z_n)}\right] = 1,
$
the Borel-Cantelli Lemma implies
\begin{equation*}
\limsup\limits_{n\rightarrow\infty} n^{-1}
\log\left(
\frac{q_{\theta^*}^{\pi}(Z_1,\ldots,Z_n)}{p_{\theta^*}^{\pi}(Z_1,\ldots,Z_n)}
\right)
\leq 0 \quad \P_{\theta^*}^{\pi}\text{-a.s.}
\end{equation*}
This leads by \eqref{eq: zero_step} to 
\begin{equation}\label{entropy sup}
\limsup\limits_{n\rightarrow\infty} n^{-1} \log q_{\theta^*}^{\pi}(Z_1,\ldots,Z_n)\leq \ell(\theta^*) \quad \P_{\theta^*}^{\pi}\text{-a.s.}
\end{equation}
Observe that
\[
 \frac{p_{\theta^*}^\pi(Z_1,\dots,Z_n)}{q_{\theta^*}^\pi(Z_1,\dots,Z_n)}
 \leq \prod_{i=1}^n \max_{s\in S} \frac{f_{\theta^*,i}(s,Z_i)}{f_{\theta^*}(s,Z_i)}.
\]
Then, with the $k\in \N$ from \ref{en: C2}, in particular \eqref{eq: finite}, it follows that
\begin{align*}
&\quad \limsup\limits_{n\rightarrow\infty} n^{-1} \log\left(\frac{p_{\theta^*}^{\pi}(Z_1,\ldots,Z_n)}{q_{\theta^*}^{\pi}(Z_1,\ldots,Z_n)}\right)
\leq\limsup\limits_{n\rightarrow\infty} 
n^{-1} \log\left(\prod_{i=1}^n \max_{s\in S} \frac{f_{\theta^*,i}(s,Z_i)}{f_{\theta^*}(s,Z_i)}\right)\\
&= \limsup\limits_{n\rightarrow\infty} 
n^{-1}\left( \log\left(\prod_{i=1}^{k-1} \max_{s\in S} \frac{f_{\theta^*,i}(s,Z_i)}{f_{\theta^*}(s,Z_i)}\right)
+\log\left(\prod_{i=k}^n \max_{s\in S} \frac{f_{\theta^*,i}(s,Z_i)}{f_{\theta^*}(s,Z_i)}
\right)\right)\\
&= \limsup\limits_{n\rightarrow\infty} n^{-1}\log\left(\prod_{i=k}^n \max_{s\in S} \frac{f_{\theta^*,i}(s,Z_i)}{f_{\theta^*}(s,Z_i)}\right)\quad \P_{\theta}^{\pi}\text{-a.s.}
\end{align*}
Again, for any $\varepsilon>0$ we obtain by Markov's inequality that
\begin{align*}
&\P_{\theta^*}^{\pi}\left(n^{-1} \log\left(\prod\limits_{i=k}^n
\max\limits_{s\in S} \frac{f_{\theta^*,i}(s,Z_i)}{f_{\theta^*}(s,Z_i)}\right)
\geq \varepsilon\right)\\
& =\P_{\theta^*}^{\pi}\left( \prod\limits_{i=k}^n
\max\limits_{s\in S} \frac{f_{\theta^*,i}(s,Z_i)}{f_{\theta^*}(s,Z_i)}
\geq \exp(n\varepsilon)\right)
\leq \frac{\E_{\theta^*}^{\pi}\left[\prod\limits_{i=k}^n\max\limits_{s\in S} \frac{f_{\theta^*,i}(s,Z_i)}{f_{\theta^*}(s,Z_i)}\right]}{\exp(n\epsilon)}\\
&= \exp\left(n\left(n^{-1}\log\left( 
\E_{\theta^*}^{\pi}\left[\prod\limits_{i=k}^n\max\limits_{s\in S} 
\frac{f_{\theta^*,i}(s,Z_i)}{f_{\theta^*}(s,Z_i)}\right]\right)-\epsilon\right)\right).
\end{align*}
By Lemma \ref{slower than expoential}, the Borel-Cantelli Lemma yields 
\[
\limsup\limits_{n\rightarrow\infty} n^{-1} \log
\left(
\prod\limits_{i=k}^n
\max\limits_{s\in S} \frac{f_{\theta^*,i}(s,Z_i)}{f_{\theta^*}(s,Z_i)}
\right)\leq 0 \quad \P_{\theta^*}^{\pi}\text{-a.s.}
\]
which leads to
\[
\limsup\limits_{n\rightarrow\infty} n^{-1} \log
\left(
\frac{p_{\theta^*}^{\pi}(Z_1,\ldots,Z_n)}{q_{\theta^*}^{\pi}(Z_1,\ldots,Z_n)}
\right)\leq 0 \quad \P_{\theta^*}^{\pi}\text{-a.s.}
\]
This implies
\begin{equation}\label{entropy inf}
\liminf\limits_{n\rightarrow\infty} n^{-1} \log
\left(\frac{q_{\theta^*}^{\pi}(Z_1,\ldots,Z_n)}{p_{\theta^*}^{\pi}(Z_1,\ldots,Z_n)}\right)\geq 0 \quad \P_{\theta^*}^{\pi}\text{-a.s.}
\end{equation}
By \eqref{entropy sup} and \eqref{entropy inf} we obtain \eqref{eq: first_step}.

Next we prove the statement of the theorem using \eqref{eq: first_step}.
For any $n\in \N$ observe that

\begin{align}
\label{al: equ_qq}
\frac{q^{\pi}_{\theta^*}(Z_1,\ldots,Z_n)}{q^{\nu}_{\theta^*}(Z_1,\ldots,Z_n)} 
&= \frac{\sum\limits_{s_1,\ldots,s_{n+1}\in S} \nu(s_1) \frac{\pi(s_1)}{\nu(s_1)} 
\prod\limits_{i=1}^n f_{\theta^*}(s_i,Z_i) P_{\theta^*}(s_{i},s_{i+1})} 
{\sum\limits_{s_1,\ldots,s_{n+1}\in S} \nu(s_1) \prod\limits_{i=1}^n f_{\theta^*}(s_i,Z_i) P_{\theta^*}(s_{i},s_{i+1})}\\
\notag
&\leq \max\limits_{s\in S} \frac{\pi(s)}{\nu(s)} < \infty,
\end{align}
where the finiteness follows by the fact that $\nu$ is strictly positive if and only if $\pi$ is strictly positive.
By using \eqref{al: equ_qq} we also obtain
\begin{equation}  \label{eq: qu_low_bnd}
 \frac{q^{\pi}_{\theta^*}(Z_1,\ldots,Z_n)}{q^{\nu}_{\theta^*}(Z_1,\ldots,Z_n)}  
\geq \min\limits_{s\in S} \frac{\pi(s)}{\nu(s)} > 0.
\end{equation}
Then
\begin{align*}
& \limsup\limits_{n\rightarrow\infty} n^{-1} \log q_{\theta^*}^{\nu}(Z_1,\ldots,Z_n)\\
& = \limsup\limits_{n\rightarrow\infty}n^{-1}\left( \log \left( \frac{q^{\nu}_{\theta^*}(Z_1,\ldots,Z_n)}
{q^{\pi}_{\theta^*}(Z_1,\ldots,Z_n)}\right) + \log q^{\pi}_{\theta^*}(Z_1,\ldots,Z_n)\right)\\
&\leq  \limsup\limits_{n\rightarrow\infty}n^{-1} \left(\max\limits_{s\in S} \frac{\pi(s)}{\nu(s)} + \log q^{\pi}_{\theta^*}(Z_1,\ldots,Z_n)\right)
= \ell(\theta^*)
\end{align*}
and by \eqref{eq: qu_low_bnd} we similarly have
\[
\liminf\limits_{n\rightarrow\infty}n^{-1} \log q^{\nu}_{\theta^*}(Z_1,\ldots,Z_n)\geq \ell(\theta^*).
\]
By the previous two inequalities the assertion follows.
\end{proof}

Before we come to the proof of our main result, Theorem~\ref{thm: main_thm}, we provide 
a lemma which is essentially used and proven in \cite{douc_consistency_2011}.
In our setting the formulation and the statement 
slightly simplifies compared to \cite[Lemma~13]{douc_consistency_2011}, 
since we only consider finite state spaces $S$.

\begin{lemma}
\label{Lemma 13}
Let $\delta$ be the counting measure on $S$.
Assume that the conditions \ref{en: P1}, \ref{en: P2} and \ref{en: H1} - \ref{en: H3} are satisfied. 
Then, for any $\theta\in \Theta$ with $\theta\not \sim \theta^*$, 
there exists a natural number $n_{\theta}$ and a real number $\eta_{\theta}>0$ 
such that $B(\theta,\eta_{\theta})\subseteq \mathcal{U}_{\theta}$ 
and
\begin{equation}\label{equation 13}
\frac{1}{n_{\theta}}\E_{\theta^*}^{\pi}\left[\sup\limits_{\theta'\in B(\theta,\eta_{\theta})}\log q^{\delta}_{\theta'}(Y_1,\ldots,Y_{n_{\theta}})\right] < \ell(\theta^*).
\end{equation}
Here $B(\theta,\eta)\subseteq \Theta$ is the Euclidean ball of radius 
$\eta>0$ centered at $\theta\in\Theta$.
\end{lemma}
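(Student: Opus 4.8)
The plan is to follow the argument of \cite[Lemma~13]{douc_consistency_2011}, exploiting two features: the counting measure $\delta$ turns the homogeneous likelihood into a \emph{subadditive} object, and the strict separation of relative entropy rates is already available for the stationary process $Y$. Throughout set
\[
\Psi_n(\theta,\eta) := \E_{\theta^*}^{\pi}\left[\sup_{\theta'\in B(\theta,\eta)}\log q_{\theta'}^{\delta}(Y_1,\ldots,Y_n)\right].
\]
First I would establish subadditivity. With the counting measure as initial weight, splitting the path sum at the intermediate state $x_{n+1}$ and using the elementary bound $\sum_{x} A(x)B(x)\leq \bigl(\sum_{x}A(x)\bigr)\bigl(\sum_{x}B(x)\bigr)$ for nonnegative terms gives the submultiplicativity $q_{\theta'}^{\delta}(y_{1:(n+m)})\leq q_{\theta'}^{\delta}(y_{1:n})\,q_{\theta'}^{\delta}(y_{(n+1):(n+m)})$. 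Taking logarithms, then the supremum over $B(\theta,\eta)$, and finally expectations (invoking the stationarity of $Y$ under $\P_{\theta^*}^{\pi}$) yields $\Psi_{n+m}(\theta,\eta)\leq \Psi_n(\theta,\eta)+\Psi_m(\theta,\eta)$. Condition \ref{en: H2} (with $B(\theta,\eta)\subseteq\mathcal{U}_{\theta}$) guarantees the positive part of the integrand is integrable, so each $\Psi_n$ is well defined in $[-\infty,\infty)$, and Fekete's lemma provides the limit $\ell_{\theta,\eta}:=\lim_n n^{-1}\Psi_n(\theta,\eta)=\inf_n n^{-1}\Psi_n(\theta,\eta)$.

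Next I would let the radius shrink. By \ref{en: H3} and \ref{en: P2} the map $\theta'\mapsto q_{\theta'}^{\delta}(y_{1:n})$ is continuous, so $\sup_{\theta'\in B(\theta,\eta)}\log q_{\theta'}^{\delta}(Y_{1:n})$ decreases to $\log q_{\theta}^{\delta}(Y_{1:n})$ as $\eta\downarrow 0$; monotone convergence (with the \ref{en: H2}-integrable majorant controlling the positive part) gives $\Psi_n(\theta,\eta)\downarrow u_n:=\E_{\theta^*}^{\pi}[\log q_{\theta}^{\delta}(Y_{1:n})]$. Since $u_n$ is itself subadditive, $\ell(\theta):=\lim_n n^{-1}u_n=\inf_n n^{-1}u_n$ exists, and interchanging the two infima yields $\lim_{\eta\downarrow 0}\ell_{\theta,\eta}=\inf_{\eta>0}\inf_n n^{-1}\Psi_n(\theta,\eta)=\inf_n n^{-1}u_n=\ell(\theta)$. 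Because the initial weight enters $\log q^{\cdot}$ only through a bounded factor (cf.\ the estimate \eqref{al: equ_qq}), this $\ell(\theta)$ agrees with the relative-entropy-based limit of \cite{douc_consistency_2011}.

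The crucial input — and the step I expect to be the genuine obstacle rather than routine bookkeeping — is the strict inequality $\ell(\theta)<\ell(\theta^*)$ for every $\theta\not\sim\theta^*$. This is precisely the identifiability statement for the \emph{homogeneous} stationary HMM $Y$: the relative entropy rate $\ell(\theta^*)-\ell(\theta)$ is nonnegative and vanishes only on the equivalence class of $\theta^*$ (here the convention $\theta_1\sim\theta_2\Leftrightarrow \P^{\mu_1,Y}_{\theta_1}=\P^{\mu_2,Y}_{\theta_2}$ and the irreducibility \ref{en: P1} enter), which \cite{douc_consistency_2011} obtains via exponential separability. Since our observable process is irrelevant to this computation — it concerns only $Y$ — the statement transfers verbatim, the finiteness of $S$ merely simplifying the bookkeeping.

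Granting the strict inequality, I would finish as follows. Because $\ell_{\theta,\eta}\to\ell(\theta)<\ell(\theta^*)$ as $\eta\downarrow 0$, one may pick $\eta_\theta>0$ small enough that simultaneously $B(\theta,\eta_\theta)\subseteq\mathcal{U}_{\theta}$ and $\ell_{\theta,\eta_\theta}<\ell(\theta^*)$. Finally, since $\ell_{\theta,\eta_\theta}=\inf_n n^{-1}\Psi_n(\theta,\eta_\theta)$, there exists a finite $n_\theta$ with $n_\theta^{-1}\Psi_{n_\theta}(\theta,\eta_\theta)<\ell(\theta^*)$, which is exactly \eqref{equation 13}.
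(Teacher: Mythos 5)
Your proposal is correct and follows essentially the same route as the paper, whose own proof is simply a citation of \cite[Theorem~12]{douc_consistency_2011} together with ``the arguments in the proof of \cite[Lemma~13]{douc_consistency_2011}'' --- precisely the subadditivity/Fekete step, the shrinking-ball monotone convergence, and the identifiability input $\ell(\theta)<\ell(\theta^*)$ that you spell out. You correctly isolate the strict inequality as the one non-routine ingredient and attribute it to the homogeneous-HMM identifiability result, which is exactly what the paper delegates to Douc et al.
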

\begin{proof}
 The result follows straightforward from \cite[Theorem~12]{douc_consistency_2011}
 and the arguments in the proof of \cite[Lemma~13]{douc_consistency_2011}.
\end{proof}

Systematically, the proof of Theorem~1 follows the same line of 
arguments as the proof of \cite[Theorem~1]{douc_consistency_2011}.
However, let us point out that the scenario is very different:
\begin{itemize}
 \item We consider the QMLE $\theta^{\,\rm QML}_{\nu,n}$ 
 instead of the MLE. 
 \item The arguments we use heavily rely on the a.m.s. property
 of $\mathbb{P}_{\theta^*}^Z$.
\end{itemize}

\begin{proof}[\textbf{Proof of Theorem 1}]
By the standard approach to prove consistency, see Lemma \ref{Wald lemma} 
and Theorem~\ref{Wald theorem}, Theorem~\ref{thm: q_Z_well_defined_ell} 
and the fact that
\begin{align*}
q^{\nu}_{\hat{\theta}_n}(Z_1,\ldots,Z_n)\geq q^{\nu}_{\theta^*}(Z_1,\ldots,Z_n) \qquad\forall n\in\N
\end{align*}
it is sufficient
to prove for any closed set $C\subseteq \Theta$ 
with $\theta^*\not\in C$ that
\begin{equation*}
\limsup\limits_{n\rightarrow\infty} \sup\limits_{\theta'\in C}n^{-1} 
\log q_{\theta'}^{\nu}(Z_1,\ldots,Z_n) < \ell(\theta^*)\quad\P_{\theta}^{\pi}\text{-a.s.}
\end{equation*}
Note that, with $\eta_{\theta}$ defined in Lemma~\ref{Lemma 13},
the set $\{B(\theta,\eta_{\theta}),\theta\in C\}$ is a cover of $C$. 
As $\Theta$ is compact, $C$ is also compact and
thus admits a finite subcover $\{B(\theta_i,\eta_{\theta_i}),\theta_i \in C, i=1,\ldots,N\}$. 
Hence it is enough to verify
\begin{equation} \label{eq: to show}
\limsup\limits_{n\rightarrow\infty} \sup\limits_{\theta'\in B(\theta,\eta_{\theta})\cap C}
n^{-1} \log q_{\theta'}^{\nu}(Z_1,\ldots,Z_n) < \ell(\theta^*)\quad\P_{\theta}^{\pi}\text{-a.s.}
\end{equation}
for any $\theta\not\sim\theta^*$.

Let us fix $\theta\not \sim \theta^*$ and let $\eta_\theta$ as well as $n_\theta$
as in Lemma~\ref{Lemma 13}. Observe that 
for 
any $\theta' \in \Theta$ 
and 
any $1\leq m \leq n$ we have
\begin{align}
\label{leq 1}
 q_{\theta'}^\nu(z_{1},\dots,z_n) 
 & \leq q_{{\theta'}}^\nu(z_{1},\dots,z_{m-1})\, q_{\theta'}^\delta(z_{m},\dots,z_{n}),\\
\label{leq 2}
 q_{\theta'}^\delta(z_{1},\dots,z_n) & \leq q_{{\theta'}}^\delta(z_{1},\dots,z_{m-1}) 
 q_{\theta'}^\delta(z_{m},\dots,z_n),
\end{align}
and define $g^*_{{\theta'},m,n}(z_m,\dots,z_n):= \prod_{i=m}^n \max_{s\in S} f_{\theta'}(s,z_i)$
as well as $i(n):= \lfloor n/n_\theta \rfloor$.

By using those definitions, and by (\ref{leq 1}) and (\ref{leq 2}) we obtain
for sufficiently large $n\in \N$ that
\begin{align*}
 \ell_{\nu,n}^{\,\rm Q}(\theta') 
&\leq \frac{1}{n_{\theta}}\sum\limits_{r=1}^{n_{\theta}} \ell_{\nu,r}^{\,\rm Q}(\theta') + \log q^{\delta}_{\theta'}(Z_{r+1},\ldots,Z_{n})\\
&\leq \frac{1}{n_{\theta}} \sum\limits_{r=1}^{n_{\theta}} 
\log g^*_{\theta',1,r}(Z_{1},\ldots,Z_{r})\\
&\quad+\frac{1}{n_{\theta}} \sum\limits_{r=1}^{n_{\theta}}\sum\limits_{k=1}^{i(n)-1} \log q^{\delta}_{\theta'}(Z_{n_{\theta}(k-1)+r+1},\ldots,Z_{n_{\theta}k+r})\\
&\qquad+ \frac{1}{n_{\theta}} \sum\limits_{r=1}^{n_{\theta}} 
\log g^{*}_{\theta',n_{\theta}(i(n)-1)+r+1,n}(Z_{n_{\theta}(i(n)-1)+r+1},\ldots,Z_{n}) \\
&= 
\frac{1}{n_{\theta}} \sum\limits_{r=1}^{n_{\theta}} \log g^{*}_{\theta',1,r}(Z_{1},\ldots,Z_{r})\\
&\quad+\frac{1}{n_{\theta}} \sum\limits_{r=1}^{n_{\theta}(i(n)-1)} \log q^{\delta}_{\theta'}(Z_{r+1},\ldots,Z_{n_{\theta}+r})\\
&\qquad+\frac{1}{n_{\theta}} \sum\limits_{r=1}^{n_{\theta}} \sum\limits_{k=n_{\theta}(i(n)-1)+r+1}^{n}  
\max\limits_{s\in S}\log f_{\theta'}(s,Z_k).
\end{align*}
Observe that for $1\leq r \leq n_{\theta}$ holds $n_{\theta}(i(n)-1)+r\geq n - 2n_{\theta}$.
Hence we can further estimate the last average and obtain
\begin{align}
\sup\limits_{\theta'\in B(\theta,\eta_{\theta})\cap C}\md{ \ell_{\nu,n}^{\,\rm Q}}(\theta')
&\leq 
\frac{1}{n_{\theta}} \sum\limits_{r=1}^{n_{\theta}} 
\sup\limits_{\theta'\in B(\theta,\eta_{\theta})\cap C} \log g^{*}_{\theta',1,r}(Z_{1},\ldots,Z_{r})\notag\\
&\quad+\frac{1}{n_{\theta}}   \sum\limits_{r=1}^{n_{\theta}(i(n)-1)} \sup\limits_{\theta'\in B(\theta,\eta_{\theta})\cap C} \log q^{\delta}_{\theta'}(Z_{r+1},\ldots,Z_{n_{\theta}+r})\notag\\ 
\notag
&\qquad+ \sum\limits_{k=n - 2n_{\theta}+1}^{n} \sup\limits_{\theta'\in B(\theta,\eta_{\theta})\cap C} \max\limits_{s\in S}\log\left( f_{\theta'}(s,Z_k)\right)^{+}.
\end{align}
We multiply both sides of the previous inequality by $n^{-1}$ 
and consider the limit $n\to\infty$
of each sum on the right-hand side. In particular we show 
that the right-hand side is smaller than $\ell(\theta^*)$ which verifies \eqref{eq: to show}. 

\textbf{To the first sum:} 
By the fact that 
$
\int_{G} f_{\theta'}(s,z) \lambda(\dint z) = 1,
$
for any $s\in S$
we conclude $\lambda(\left\{z\in G : f_{\theta}(s,z) = \infty \right\})=0$. 
Hence
\[
\P_{\theta^*}^{\pi}\left(f_{\theta'}(s,Z_i)=\infty\right) = 0,
\]
and \ref{en: H3} implies
\[
\P_{\theta^*}^{\pi}\left(\sup\limits_{\theta'\in B(\theta,\eta_{\theta})\cap C}  
\log g^*_{\theta',1,r}(Z_1,\ldots,Z_r)=\infty\right) = 0\qquad\forall r\in\N.
\]
This leads to
\begin{equation*}
\lim\limits_{n\rightarrow\infty}\frac{1}{n}\frac{1}{n_{\theta}} 
\sum\limits_{r=1}^{n_{\theta}} \sup\limits_{\theta'\in B(\theta,\eta_{\theta})\cap C} \log g^{*}_{\theta',1,r}(Z_{1},\ldots,Z_{r}) = 0 \qquad \P_{\theta^*}	^{\pi}\text{-a.s.}
\end{equation*}

\textbf{To the second sum:} By the fact that 
$i(n)/n\rightarrow n_{\theta}^{-1}$ as $n\rightarrow\infty$,

Lemma \ref{Lemma 13} and Corollary~\ref{cor: erg_thm_ams} we obtain

\begin{align*}
&\lim\limits_{n\rightarrow\infty}\frac{1}{n}\frac{1}{n_{\theta}}   \sum\limits_{r=1}^{n_{\theta}(i(n)-1)} \sup\limits_{\theta'\in B(\theta,\eta_{\theta})\cap C} \log q^{\delta}_{\theta'}(Z_{r+1},\ldots,Z_{n_{\theta}+r})\notag\\
&\qquad\qquad\qquad\qquad= \frac{1}{n_{\theta}}\E_{\theta^*}^{\pi}\left[\sup\limits_{\theta'\in B(\theta,\eta_{\theta})\cap C} \log p^{\delta}_{\theta'}(Y_{1},\ldots,Y_{n_{\theta}})\right]
<\ell(\theta^*).
\end{align*}

\textbf{To the third sum:}
By assumption \ref{en: H2} it follows that
\[
\E_{\theta^*}^{\pi}\left[\sup\limits_{\theta'\in \mathcal{U}_{\theta}}\max\limits_{s\in S} \left(\log f_{\theta}(s,Y_1)\right)^+\right]\leq 
\sum\limits_{s\in S}\E_{\theta^*}^{\pi}\left[\sup\limits_{\theta'\in \mathcal{U}_{\theta}} \left(\log f_{\theta}(s,Y_1)\right)^+\right]<\infty
\]
and by Corollary~\ref{cor: erg_thm_ams} we have
\[
\lim\limits_{n\rightarrow\infty}\frac{1}{n}\sum\limits_{k=1}^n \sup\limits_{\theta'\in B(\theta,\eta_{\theta})} \max\limits_{s\in S} \log (f_{\theta'}(s,Z_k))^+
= \E_{\theta^*}^{\pi}\left[\sup\limits_{\theta'\in B(\theta,\eta_{\theta})}\max\limits_{s\in S} \left(\log f_{\theta}(s,Y_1)\right)^+\right].
\]
Hence
\begin{equation*}
\lim\limits_{n\rightarrow\infty}\frac{1}{n}\sum\limits_{k=n-2n_{\theta}+1}^n \sup\limits_{\theta'\in B(\theta,\eta_{\theta})} \max\limits_{s\in S} \log (f_{\theta'}(s,Z_k))^+
= 0\qquad \P_{\theta^*}	^{\pi}\text{-a.s.}
\end{equation*}
and the proof is complete 
\end{proof}

As a consequence of the proof of Theorem~\ref{thm: main_thm} we 
are able to prove consistency for the MLE
under condition \ref{en: C3}. 

\begin{proof}[\textbf{Proof of Corollary~\ref{lemma: main lemma}}]
We use the same strategy as in the proof of Theorem \ref{thm: main_thm}. By Theorem \ref{Barron entropy} it follows
that
\begin{equation*}
\lim_{n \to \infty} n^{-1} \log p_{\theta^*}^{\pi}(Z_1,\ldots,Z_n)= 
\ell(\theta^*)\quad \P_{\theta^*}^{\pi}\text{-a.s.}
\end{equation*}
For $\theta\not \sim \theta^*$, we chose 
$\kappa_{\theta}\leq \eta_{\theta}$, where $\eta_{\theta}$ 
is defined in Lemma \ref{Lemma 13},
such that $B(\theta,\kappa_{\theta})\subset \mathcal{E}_{\theta}$. 
As explained in the proof of Theorem \ref{thm: main_thm},
 it is sufficient to verify for any closed set $C\subseteq \Theta$
 with $\theta^*\not\in C$ that
\begin{equation} \label{eq: ML to show}
\limsup\limits_{n\rightarrow\infty} \sup\limits_{\theta'\in B(\theta,\kappa_{\theta})\cap C}
n^{-1} \log p_{\theta'}^{\nu}(Z_1,\ldots,Z_n) < \ell(\theta^*)\quad\P_{\theta}^{\pi}\text{-a.s.}
\end{equation}
With $k\in \N$ from condition \ref{en: C3} 
we obtain by using \eqref{eq: theta finite} that
\begin{align*}
&\limsup\limits_{n\rightarrow\infty} \sup\limits_{\theta'\in B(\theta,\kappa_{\theta})\cap C}
n^{-1} \log\left( \frac{p_{\theta'}^{\nu}(Z_1,\ldots,Z_n)}{q_{\theta'}^{\nu}(Z_1,\ldots,Z_n)}\right)\\
&\leq \limsup\limits_{n\rightarrow\infty} \sup\limits_{\theta'\in B(\theta,\kappa_{\theta})\cap C} n^{-1}\log\left(
\prod\limits_{i=1}^n \max\limits_{s\in S}\frac{f_{\theta',i}(s,Z_i)}{f_{\theta'}(s,Z_i)}\right)\\
&=  \limsup\limits_{n\rightarrow\infty} \sup\limits_{\theta'\in B(\theta,\kappa_{\theta})\cap C} n^{-1}\log\left(
\prod\limits_{i=k}^n \max\limits_{s\in S}\frac{f_{\theta',i}(s,Z_i)}{f_{\theta'}(s,Z_i)}\right)\\
&\leq  \limsup\limits_{n\rightarrow\infty} n^{-1}\log\left(
\prod\limits_{i=k}^n  \sup\limits_{\theta'\in B(\theta,\kappa_{\theta})\cap C} \max\limits_{s\in S}\frac{f_{\theta',i}(s,Z_i)}{f_{\theta'}(s,Z_i)}\right).
\end{align*}
By the same arguments as for proving \eqref{entropy inf} 
in the proof of Theorem~\ref{thm: q_Z_well_defined_ell}
we get that
\begin{align*}
\P_{\theta^*}^{\pi}\left(n^{-1} \log\left(\prod\limits_{i=k}^n  
\sup\limits_{\theta'\in B(\theta,\kappa_{\theta})\cap C} 
\max\limits_{s\in S}\frac{f_{\theta',i}(s,Z_i) }{f_{\theta'}(s,Z_i)}\right)\geq 
\varepsilon\right)\leq \exp\left(n(c_n -\varepsilon)\right),
\end{align*}
with
\[
c_n :=   \limsup\limits_{n\rightarrow\infty} n^{-1} \log\left(\E_{\theta^*}^{\pi}\left[ \prod\limits_{i=k}^n  \sup\limits_{\theta'\in B(\theta,\kappa_{\theta})\cap C}
  \max\limits_{s\in S}  \frac{f_{\theta',i}(s,Z_i)}{f_{\theta'}(s,Z_i)} \right] \right).
\]
Assumption \ref{en: C3}, 
in particular Lemma~\ref{theta slower than expoential}, 
and the Borel Cantelli lemma implies that
\[
\P_{\theta^*}^{\pi}\left( \limsup\limits_{n\rightarrow\infty} 
\sup\limits_{\theta'\in B(\theta,\kappa_{\theta})\cap C}
n^{-1} \log\left( \frac{p_{\theta'}^{\nu}(Z_1,\ldots,Z_n)}
{q_{\theta'}^{\nu}(Z_1,\ldots,Z_n)}\right) \leq 0 \right) = 1.
\]
Similarly, it follows that 
\[
\P_{\theta^*}^{\pi}\left( \limsup\limits_{n\rightarrow\infty} \sup\limits_{\theta'\in B(\theta,\kappa_{\theta})\cap C}
n^{-1} \log\left( \frac{q_{\theta'}^{\nu}(Z_1,\ldots,Z_n)}{p_{\theta'}^{\nu}(Z_1,\ldots,Z_n)}\right) \leq 0 \right) = 1,
\]
which implies
\begin{align*}
& \limsup\limits_{n\rightarrow\infty} \sup\limits_{\theta'\in B(\theta,\kappa_{\theta})\cap C}
n^{-1} \log p_{\theta'}^{\nu}(Z_1,\ldots,Z_n) \\
& =
\limsup\limits_{n\rightarrow\infty} \sup\limits_{\theta'\in B(\theta,\kappa_{\theta})\cap C}
n^{-1} \log q_{\theta'}^{\nu}(Z_1,\ldots,Z_n).
\end{align*}
Finally the assertion follows from (\ref{eq: to show}).
\end{proof}

{\bf Acknowledgments.}
We thank the referees for their careful reading of the manuscript and their comments.
Manuel Diehn and Axel Munk gratefully acknowledge support of the CRC 803 Project C2.
Daniel Rudolf gratefully acknowledges support of the 
Felix-Bernstein-Institute for Mathematical Statistics in the Biosciences 
(Volkswagen Foundation) and the Campus laboratory AIMS.

\appendix
\section{Strong consistency}\label{Appendix}
\noindent
We follow the classical  the approach of Wald, 
see \cite{wald_note_1949}, adopted to quasi likelihood estimation. 
 Let $(\Omega,\mathscr{F},\mathbb{P})$ be a probability 
space and $(G,\mathscr{G})$ be a measurable space. 
Assume that $\Theta \subseteq \mathbb{R}^d$ and 
let $\abs{\cdot}$ be the $d$-dimensional Euclidean norm.

\begin{thm}[Strong consistency] \label{Wald theorem}Let $(W_n)_{n\in\N}$ be a sequence of random variables
mapping from $(\Omega,\mathscr{F},\P)$ to $(G,\mathscr{G})$. For any $n\in \N$ let
$h_n:\Theta \times G^n\rightarrow [0,\infty)$ be a measurable function. Assume that there exists an element $\theta^*\in \Theta$
such that for any closed $C\subset\Theta$ with $\theta^*\not\in C$ and all $n\in\N$, we have
\begin{equation}\label{Wald 1}
 \lim\limits_{n\rightarrow\infty}\sup\limits_{\theta\in C} \frac{h_n(\theta,W_1,\ldots,W_n)}
 {h_n(\theta^*,W_1,\ldots,W_n)}=0 \quad \mathbb{P}\text{-a.s.}
\end{equation}
Let $(\hat{\theta}_n)_{n\in\N}$ be a sequence of random variables mapping from $(\Omega,\mathscr{F},\P)$ 
to $\Theta$ such that 
\begin{equation}\label{Wald 2}
\exists c>0\;\, \&\;\, n_0\in \N\quad\forall\, n\geq n_0:
 \quad
 \frac{h_n(\hat \theta_n,W_1,\dots,W_n)}{h_n(\theta^*,W_1,\dots,W_n)} \geq c,\quad \mathbb{P}\text{-a.s.}
\end{equation} 
Then 
\[
\lim\limits_{n\rightarrow\infty} \abs{\hat{\theta}_n-\theta^*} = 0 \quad \P\text{-a.s.}
\]
\end{thm}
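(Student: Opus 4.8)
The plan is to run the classical Wald contradiction argument: were $\hat\theta_n$ not to converge to $\theta^*$, it would have to enter, for arbitrarily large $n$, a closed set bounded away from $\theta^*$, on which the ratio in \eqref{Wald 1} collapses to zero, contradicting the uniform lower bound \eqref{Wald 2}. Concretely, I would fix $\varepsilon>0$ and set $C_\varepsilon := \{\theta\in\Theta : \abs{\theta-\theta^*}\geq \varepsilon\}$, a closed subset of $\Theta$ not containing $\theta^*$. If $C_\varepsilon=\emptyset$ the bound $\abs{\hat\theta_n-\theta^*}<\varepsilon$ is automatic, so assume $C_\varepsilon\neq\emptyset$. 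Applying \eqref{Wald 1} with $C=C_\varepsilon$ furnishes a $\P$-null set off which
\[
 \sup_{\theta\in C_\varepsilon}\frac{h_n(\theta,W_1,\ldots,W_n)}{h_n(\theta^*,W_1,\ldots,W_n)}\longrightarrow 0,\qquad n\to\infty.
\]

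Next I would work on the intersection of this event with the full-probability event from \eqref{Wald 2}. For $\P$-almost every $\omega$ there is then an index $N=N(\omega,\varepsilon)$ such that the above supremum is strictly below $c$ for all $n\geq\max(N,n_0)$. If $\hat\theta_n\in C_\varepsilon$ for such an $n$, the definition of the supremum yields
\[
 \frac{h_n(\hat\theta_n,W_1,\ldots,W_n)}{h_n(\theta^*,W_1,\ldots,W_n)}
 \leq \sup_{\theta\in C_\varepsilon}\frac{h_n(\theta,W_1,\ldots,W_n)}{h_n(\theta^*,W_1,\ldots,W_n)} < c,
\]
contradicting \eqref{Wald 2}. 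Hence $\hat\theta_n\notin C_\varepsilon$, i.e.\ $\abs{\hat\theta_n-\theta^*}<\varepsilon$, for all sufficiently large $n$.

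To upgrade this ``for each fixed $\varepsilon$, almost surely'' statement into a single almost-sure convergence, I would fix a sequence $\varepsilon_k\downarrow 0$ and intersect the countably many associated null sets, whose union remains null. On its complement one has, for every $k$, that $\abs{\hat\theta_n-\theta^*}<\varepsilon_k$ for all large $n$, which is exactly $\hat\theta_n\to\theta^*$.

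The main obstacle is less analytic than bookkeeping: the convergence in \eqref{Wald 1} is asserted per closed set $C$, so quantifying directly over all $\varepsilon>0$ would involve uncountably many exceptional null sets whose union need not be null; the countable exhaustion $\varepsilon_k\downarrow 0$ is precisely what repairs this. A minor point is that the ratios in \eqref{Wald 1}--\eqref{Wald 2} presuppose $h_n(\theta^*,W_1,\ldots,W_n)>0$, under which the domination of the value at $\hat\theta_n$ by the supremum over $C_\varepsilon$ is immediate.
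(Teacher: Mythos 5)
Your proposal is correct and follows essentially the same route as the paper: fix $\varepsilon>0$, apply \eqref{Wald 1} to the closed set $C_\varepsilon=\{\theta\in\Theta:\abs{\theta-\theta^*}\geq\varepsilon\}$, and use \eqref{Wald 2} to force $\hat\theta_n\notin C_\varepsilon$ eventually, the paper phrasing this as the inclusion of limsup-events $A_\varepsilon^{(1)}\subseteq A_\varepsilon^{(2)}\subseteq A_\varepsilon^{(3)}$ with $\P(A_\varepsilon^{(3)})=0$. Your explicit countable exhaustion $\varepsilon_k\downarrow 0$ at the end just spells out the step the paper leaves implicit in ``which implies the assertion.''
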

\begin{proof}
For arbitrary $\varepsilon>0$ define
\begin{align*}
 A_\varepsilon^{(1)} 
 &:= \left\{\omega\in \Omega\colon \limsup\limits_{n\rightarrow\infty}\, \abs{\hat{\theta}_n(\omega)-\theta^*}>\varepsilon 
      \right\},\\
 A_\varepsilon^{(2)}
 &:= \left\{\omega\in \Omega\colon 
 \limsup\limits_{n\rightarrow\infty}\sup_{\theta:\,\abs{\theta-\theta^*}\geq \varepsilon} 
 \frac{h_n(\theta,W_1(\omega),\ldots,W_n(\omega))}{h_n(\hat{\theta}_n(\omega),W_1(\omega),\ldots,W_n(\omega))}\geq1
      \right\},\\
 A_{\varepsilon}^{(3)}
 & := \left\{ \omega\in \Omega\colon 
 \limsup\limits_{n\rightarrow\infty}\sup_{\theta:\,\abs{\theta-\theta^*}\geq \varepsilon} 
 \frac{h_n(\theta,W_1(\omega),\ldots,W_n(\omega))}{h_n(\theta^*,W_1(\omega),\ldots,W_n(\omega))}\geq c
 \right\}.
\end{align*}
Note that $A_\varepsilon^{(1)}\subseteq A_\varepsilon^{(2)} \subseteq A_\varepsilon^{(3)}$,
where the last inclusion follows by \eqref{Wald 2}.
Hence, by \eqref{Wald 1} we have $\mathbb{P}(A_\varepsilon^{(3)})=0$ so that
\[
 \mathbb{P}(A_{\varepsilon}^{(1)}) 
 = \P\left(\limsup\limits_{n\to \infty} \;\abs{\hat{\theta}_n-\theta^*}>\varepsilon\right) = 0,
\]
which implies the assertion.
\end{proof}
The following lemma is useful to verify condition \eqref{Wald 1}.
\begin{lemma}\label{Wald lemma}
Let $(W_n)_{n\in\N}$ be a sequence of random variables
mapping from $(\Omega,\mathcal{F},\P)$ to $(G,\mathscr{G})$ and, as in Theorem~\ref{Wald theorem},
for any $n\in \N$ let
$h_n:\Theta \times G^n\rightarrow [0,\infty)$ be a measurable function. 
Assume that there is an element $\theta^*\in \Theta$ such that for any closed $C\subset \Theta$
with $\theta^*\not \in C$ we have
\begin{equation} \label{eq: appendix_wald_lemma}
 \limsup\limits_{n\to \infty} \sup_{\theta \in C}
 \frac{1}{n} \log h_n(\theta,W_1,\dots,W_n)
 < \lim_{n\to \infty} \frac{1}{n} \log h_n(\theta^*,W_1,\dots,W_n)\quad\P\text{-a.s.}
\end{equation}
provided that the limit on the right hand-side exists.
Then condition \eqref{Wald 1} is satisfied.
\begin{proof}

Obviously \eqref{eq: appendix_wald_lemma} implies
\[
 \log\left(\limsup_{n\to \infty} 
 \sup_{\theta\in C} \left[ \frac{h_n(\theta,W_1,\dots,W_n)}{h_n(\theta^*,W_1,\dots,W_n)} 
 \right]^{1/n}
 \right)<0.
\]
This leads to
\[
 \limsup_{n\to \infty} 
 \sup_{\theta\in C} \left[ \frac{h_n(\theta,W_1,\dots,W_n)}{h_n(\theta^*,W_1,\dots,W_n)} 
 \right]^{1/n}<1
\]
from which \eqref{Wald 1} follows.
\end{proof}
\end{lemma}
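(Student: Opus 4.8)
The plan is to pass from the additive, log-scale inequality \eqref{eq: appendix_wald_lemma} to the multiplicative statement \eqref{Wald 1} simply by exponentiating, the only subtlety being a careful handling of the $\limsup$ together with the $\theta^{*}$-term in the denominator.

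First I would note that $\frac{1}{n}\log h_n(\theta^*,W_1,\dots,W_n)$ does not depend on $\theta$ and hence factors out of the supremum, so that
\[
\sup_{\theta\in C}\frac{1}{n}\log\frac{h_n(\theta,W_1,\dots,W_n)}{h_n(\theta^*,W_1,\dots,W_n)}
=\sup_{\theta\in C}\frac{1}{n}\log h_n(\theta,W_1,\dots,W_n)-\frac{1}{n}\log h_n(\theta^*,W_1,\dots,W_n).
\]
Since, by the proviso of the lemma, the subtracted sequence converges $\P$-a.s., I can pass $\limsup_{n\to\infty}$ through the difference and invoke \eqref{eq: appendix_wald_lemma} to conclude that this $\limsup$ is strictly negative $\P$-a.s.

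Then I would exponentiate: as $x\mapsto e^{x}$ is increasing it commutes with both the supremum and the $n$-th root, so the strict negativity just obtained is equivalent to
\[
\limsup_{n\to\infty}\left(\sup_{\theta\in C}\frac{h_n(\theta,W_1,\dots,W_n)}{h_n(\theta^*,W_1,\dots,W_n)}\right)^{1/n}<1\qquad\P\text{-a.s.}
\]
A pathwise root-test argument then finishes the proof: any nonnegative sequence whose $n$-th root has $\limsup<1$ is eventually dominated by $r^{n}$ for some $r<1$ and therefore tends to $0$, which is precisely the conclusion \eqref{Wald 1}.

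The hard part will be the interchange in the first step: pulling the $\limsup$ through the subtraction is legitimate only because $\frac{1}{n}\log h_n(\theta^*,W_1,\dots,W_n)$ \emph{genuinely converges} rather than merely having a finite $\limsup$, and this is exactly what the hypothesis that $\lim_{n\to\infty}\frac{1}{n}\log h_n(\theta^*,W_1,\dots,W_n)$ exists supplies. Everything else is routine; one only needs to keep in mind that all statements hold off a single $\P$-null set and that the (random) value of the left-hand $\limsup$ may depend on $\omega$, but its a.s.\ strict negativity is all the root-test step requires.
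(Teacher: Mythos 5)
Your proposal is correct and follows essentially the same route as the paper's own proof: pass from the log-scale inequality \eqref{eq: appendix_wald_lemma} to strict negativity of the $\limsup$ of $\frac{1}{n}\log$ of the ratio, exponentiate, and conclude via a root-test argument. You merely spell out the two steps the paper labels ``obvious'' (the interchange of $\limsup$ with the subtraction, justified by the existence of the limit for $\theta^*$, and the final pathwise domination by $r^n$), which is a faithful elaboration rather than a different argument.
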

\section{Assumptions for asymptotic normality}\label{Ass_asymp_norm}
\noindent
For the MLE to achieve a statement about asymptotic
normality one can apply the theory for $M$-estimators developed by Jensen in \cite{jensen_asymptotic_2011}.
Before we are able to formulate assumptions which lead to asymptotic normality of
$\theta_{\nu,n}^{{\rm QML}}$ we need some further notations.
Recall that $B(\theta^*,\delta)$ is the Euclidean ball of radius $\delta>0$ centered at $\theta^*\in \Theta$.
For $v\in \R^d$ let $\vert v \vert_1$ be the $\ell_1$-norm. 
Consider a sequence of functions $(a_i)_{i\in\mathbb{N}}$ with 
$a_i:\Theta\times S\times S\times G\rightarrow \mathbb{R}$. We say that 
$(a_i)_{i\in\N}$ belongs to the class $C_k$ if there exist a sequence of functions $(a^0_i)_{i\in\mathbb{N}}$,
with $a^0_i:G\rightarrow [0,\infty)$,
a constant $\delta_0>0$ and a constant $K<\infty$ such that for all $i\in\N$, 
\[
\sup_{s_1,s_2\in S,\,\theta\in B(\theta^*,\delta_0)} \abs{a_i(\theta,s_1,s_2,z)}\leq a_i^0(z) \quad \forall z\in G
\quad\text{and}\quad 
\E_{\theta^*}^{\pi}\left[ a_i^0(Z_i)^k\right]\leq K.
\] 
Furthermore, $(a_i)_{i\in\N}$ belongs to the class $C_{k,m}$ if $(a_i)_{i\in\N}\in C_k$, 
there exist a sequence of functions $(\bar{a}_i)_{i\in\mathbb{N}}$, with $\bar{a}_i:G\rightarrow [0,\infty)$, 
and $\delta_0>0$
such that for all $\theta\in B(\theta^*,\delta_0)$, for all $s_1,s_2\in S$ and for all $i\in\N$,
\[
\abs{a_i(\theta,s_1,s_2,z)-a_i(\theta^*,s_1,s_2,z)} \leq \abs{\theta-\theta^*}\bar{a}_i(z)
\quad \forall z\in G
\quad\text{and}
\quad \E_{\theta^*}^{\pi}\left[\bar{a}_i(Z_i)^m\right]\leq K.
\]
For a positive semi-definite, symmetric matrix $A\in \mathbb{R}^{d\times d}$ let $\lambda_{\min}(A)$
to be the smallest eigenvalue of $A$.
For any $\theta\in \Theta$ define the gradient
\[
 S_n(\theta) := \frac{\partial}{\partial \theta'} \log q_{\theta'}^\nu(Z_1,\dots,Z_n)\bigr\rvert_{\theta' = \theta} 
\]
and note that with random vectors
\begin{equation}  \label{eq: psi_i}
 \psi_i(\theta) := 
 \begin{cases}
  \frac{\partial}{\partial \theta'} \log (P_{\theta'}(X_{i-1},X_i) f_{\theta'}(X_i,Z_i))\bigr\rvert_{\theta' = \theta}, & i\geq 2,\\
  \frac{\partial}{\partial \theta'} \log (\nu(X_1) f_{\theta'}(X_1,Z_1))\bigr\rvert_{\theta' = \theta}, & i=1,
 \end{cases}
\end{equation}
a simple calculation reveals $S_n(\theta) = \sum_{i=1}^n \mathbb{E}_\theta^\nu(\psi_i(\theta)\mid Z_1,\dots,Z_n)$.
The following three conditions are needed to adapt the proof of the asymptotic normality
for the MLE of \cite{jensen_asymptotic_2011} to the QMLE:
\subsubsection*{Mixing}
\begin{enumerate}[label=(M)]
\label{sec: Mixing}
\item \label{en: M}There is a constant $c_0>0$ such that
\[
c_0 \leq P_{\theta^*}(s_1,s_2) \quad\forall s_1,s_2\in S.
\]
\end{enumerate}
\subsubsection*{Central Limit Theorem}
\begin{enumerate}[label=(CLT)]
\item \label{en: CLT} 
Assume that 
\begin{equation*}
\lim\limits_{n\rightarrow\infty}\frac{1}{\sqrt{n}}\left\vert
\E_{\theta^*}^{\pi}\left(S_n(\theta^*)\right)\right\vert_1= 0,
\end{equation*}
and that
$(\psi_i
)_{i\in\N}\in C_3$. Furthermore, there exist constants $c_1>0$ and 
$n_0\in \N$ such that for $n\geq n_0$ holds
\[
\lambda_{\min}\left(\frac{1}{n}{\rm Cov}_{\theta^*}^{\pi}(S_n(\theta^*))\right) \geq c_1,
\]
where ${\rm Cov}_{\theta^*}^{\pi}(S_n(\theta^*))$ denotes the covariance matrix of $S_n(\theta^*)$.
\end{enumerate}
\subsubsection*{Uniform Convergence}
\begin{enumerate}[label=(UC)]
\item \label{en: UC} Let $F_n\in \mathbb{R}^{d\times d}$ be defined by
\[
F_n := -\frac{1}{n}\,\E_{\theta^*}^{\pi}\left[\left(
\frac{\partial}{\partial \theta'}S_n(\theta')\bigr\rvert_{\theta' = \theta^*}\right)^T\right].
\]
 Assume that there exist constants $c_2>0$ and $n_0\in\N$ such that for $n\geq n_0$
 holds $
\lambda_{\min}\left(F_n\right) \geq c_2.
  $
Furthermore, assume that $(\psi_i)_{i\in\N}$ is of class $C_4$ and 
for any $r=1,\dots,d$ we have that $(\partial \psi_i/\partial \theta_r)_{i\in \N}$ is of class $C_{3,1}$.
\end{enumerate}

\bibliographystyle{elsarticle-num}

\end{document}